\newcommand{\undertilde}[1]{\ensuremath{\mathord{\vtop{\ialign{##\crcr
   $\hfil\displaystyle{#1}\hfil$\crcr\noalign{\kern1.5pt\nointerlineskip}
   $\hfil\tilde{}\hfil$\crcr\noalign{\kern1.5pt}}}}}}
\tikzstyle{smallgraph}=[
\pgfmathsetmacro{\threevradius}{0.375*sqrt(2/3)}
\newcommand\gridgraph[2]{%
\foreach \n in {1,...,#1} {\foreach \m in {1,...,#2} {\node (\n\m) at (\m,-0.5*\n) {};}}
\foreach \n in {1,...,#1} {\draw (\n1) -- (\n#2);}
\foreach \n in {1,...,#2} {\draw (1\n) -- (#1\n);}
}
\newcommand\inS[2]{
\tikzset{inS/.style={circle,draw,fill=black,inner sep=0pt,minimum width=8pt}}
\node[inS] at (#1,-0.5*#2) {};
}
\newcommand\inSa[2]{
\tikzset{inS/.style={circle,draw,fill=white,inner sep=0pt,minimum width=8pt}}
\node[inS] at (#1,-0.5*#2) {};
}
\newcommand\notS[2]{
\tikzset{inS/.style={cross out,draw,minimum size=8*(\pgflinewidth), inner sep=0pt, outer sep=0pt}}
\node[inS] at (#1,-0.5*#2) {};
}
\newcommand\notSa[2]{
\tikzset{inS/.style={strike out,draw,minimum size=8*(\pgflinewidth), inner sep=0pt, outer sep=0pt}}
\node[inS] at (#1,-0.5*#2) {};
}
\renewenvironment{proof}{\par {\sc {\bf Proof.}\hskip 5pt}}{\hfill \qed \par}
\newtheorem{thm}{Theorem}\newtheorem{lem}[thm]{Lemma}\newtheorem{cor}[thm]{Corollary}
\newtheorem*{exa}{Example}
\begin{document}

\title{The Secure Domination Number of Cartesian Products of Small Graphs with Paths and Cycles}
\author[1]{Michael Haythorpe\corref{cor1}%
}\ead{michael.haythorpe@flinders.edu.au}\author[1]{Alex Newcombe}\ead{alex.newcombe@flinders.edu.au}

\address[1]{Flinders University, 1284 South Road, Tonsley, SA 5042, Australia, Ph: +61 882012375, Fax: +61 882012904}

\cortext[cor1]{Corresponding author}

\begin{abstract}The secure domination numbers of the Cartesian products of two small graphs with paths or cycles is determined, as well as for M\"{o}bius ladder graphs. Prior to this work, in all cases where the secure domination number has been determined, the proof has either been trivial, or has been derived from lower bounds established by considering different forms of domination. However, the latter mode of proof is not applicable for most graphs, including those considered here. Hence, this work represents the first attempt to determine secure domination numbers via the properties of secure domination itself, and it is expected that these methods may be used to determine further results in the future.

%
\end{abstract}

\begin{keyword}
Secure domination \sep graph \sep paths \sep cycles \sep Cartesian product
\end{keyword}
\maketitle
\section{Introduction}

Consider an undirected graph $G$ containing vertex set $V$ and edge set $E$. Then $S \subseteq V$ is said to be a {\em dominating set} if, for every vertex $v \in V$, we have either $v \in S$, or there exists $w \in V$ such that $\{v,w\} \in E$ and $w \in S$. In the latter case, we say that $w$ {\em covers} $v$. The size of the smallest dominating set in $G$ is called the {\em domination number of $G$}, and is denoted by $\gamma(G)$. The {\em domination problem} is to determine the domination number of a given graph. The decision problem variant of this problem, asking whether a dominating set exists with cardinality no more than a given constant, is known to be NP-complete \cite{gareyjohnson}.

The domination problem has obvious real-world applications. A common example is to imagine a set of locations which must remain under observation. We could place a guard at each location; however, it may be possible for a guard at one location to see another location. We can represent this situation by a graph $G$ where the vertices correspond to the locations, and an edge $\{v,w\}$ exists if a guard at location $v$ can observe location $w$ as well. Then, for any dominating set $S$ for $G$, placing guards at the locations in $S$ ensures every location is under observation. Of course, it is likely to be desirable to do this with as few guards as possible.

In 2005, Cockayne et al \cite{cockayne} considered a new variant of the domination problem. They defined a {\em secure dominating set} $S \subseteq V$ to be one in which, for every vertex $v \in V$, we have either $v \in S$, or there exists $w \in V$ such that $w$ covers $v$, and $(S \setminus \{w\}) \cup \{v\}$ is a dominating set. In the latter case, we say that $w$ {\em guards} $v$. The interpretation is perhaps best illustrated by continuing the example above. Suppose that there is a disturbance at one of the locations. If a guard is present at the location, they deal with the disturbance. If not, then one of the guards from a neighbouring location must move to this location in order to deal with the disturbance. It is obviously desirable if the full set of locations is still under observation when this occurs. Then, the cardinality of the smallest secure dominating set in $G$ is called the {\em secure domination number of $G$}, denoted by $\gamma_s(G)$, and the {\em secure domination problem} is to determine the secure domination number of a given graph. The secure domination problem is also known to be NP-complete \cite{wang}.

Algorithms exist to solve the secure domination problem, including linear-time algorithms for trees \cite{burgerlinear} and block graphs \cite{pradhan}, as well as algorithms for general graphs \cite{burdett,burgerbnb,burgerformulation}. However, there are very few infinite families of graphs for which the secure domination number is known. When introducing the secure domination problem, Cockayne et al \cite{cockayne} determined the secure domination number for complete multipartite graphs (including complete graphs), paths, and cycles. In 2019, Valveny and Rodriguez-Vel\'{a}zquez \cite{valveny} expanded on this by determining the secure domination number for each of the following: the corona product of any graph with a discrete graph, the Cartesian product of two equal-sized stars, and the Cartesian product of any complete graph (other than $K_2$) with either a path, cycle, or star.

In all of these results, the proofs were either trivial (such as for complete graphs), or took advantage of general lower bounds. Most commonly, the lower bounds came from first considering the {\em weak Roman domination number}, which is itself a lower bound for the secure domination number. This is worth mentioning because none of the existing results were determined by considering the specific properties unique to secure dominating sets. Of course, this approach can only be fruitful for graph families for which the lower bound obtained from another type of domination happen to coincide with the secure domination number. In general, this is not the case.

The types of graphs considered in this work have been studied in the context of other types of domination. Notably, the domination number of the Cartesian product of two paths (a grid graph) has been extensively studied in \cite{Alanko,Chang,Dreyer} and has now been effectively solved by Gon\c{c}laves et al \cite{Gonclaves}. The domination number of the Cartesian product of other combinations of paths and cycles has been investigated in \cite{Dreyer,Klavzar,Nandi,Pavilic}. The Roman domination number of similar graphs has been investigated in \cite{Cockayne2,Dreyer,Pavilic2,Yero}. Although there are many results known about these graphs for other types of domination, none of these lead to the corresponding result for secure domination.

We now add to this literature by considering Cartesian products of $P_2$ and $P_3$ with arbitrarily large paths or cycles. We adopt the convention that $P_n$ and $C_n$ respectively refer to paths and cycles containing $n$ vertices. We also use the notation $M_{2n}$ to refer to a M\"{o}bius ladder graph with $2n$ vertices; these are only defined for an even number of vertices. Upper bounds for $\gamma_s(P_2 \Box P_n)$ and $\gamma_s(P_3 \Box P_n)$ were given by Cockayne et al \cite{cockayne}, while the conjectured value for $\gamma_s(P_2 \Box C_n)$ was given by Valveny and Rodriguez-Vel\'{a}zquez \cite{valveny} and separately by Winter \cite{winter}. Nobody has previously considered $P_3 \Box C_n$. We will determine the secure dominating number for each of these graph families, and as a corollary to the $P_2 \Box C_n$ case, we are also able to determine the secure domination number for $M_{2n}$. It is our expectation that these techniques will prove useful for determining the secure domination numbers of other infinite graph families as well.

\section{Notation and useful results\label{sec-notation}}

We begin by establishing some valuable results and techniques which will be useful during the proofs in the proceeding sections. Although there have been some important bounds established in previous work, these have mainly been on the secure domination number itself. For the sake of the upcoming proofs, it is valuable to establish results for a given secure dominating set. The first item in the following Lemma was pointed out by \cite{valveny} and has been utilised several times before.


\begin{lem}Suppose that $S$ is a secure dominating set for a graph $G$ containing vertex set $V$ and edge set $E$. Then if any combination of the following modifications is made to $G$, $S$ is still secure dominating for the resulting graph.\label{thm-mods}
\begin{enumerate}\item Adding a new edge between existing vertices in $V$.
\item Deleting an edge $vw$ for $v \in S$ and $w \in S$.
\item Deleting a vertex $v$ (and its incident edges) for $v \not\in S$.\end{enumerate}\end{lem}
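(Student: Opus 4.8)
The plan is to verify each of the three modifications separately, in each case checking that the defining properties of a secure dominating set (domination, plus the ability of some guard to repel an attack at each vertex outside $S$) are preserved. The key observation I would lean on throughout is that both conditions are stated purely in terms of which vertices dominate which: domination asks that every vertex is in $S$ or adjacent to a vertex of $S$, and the secure condition asks that for each $v\notin S$ there is a neighbour $w\in S$ such that swapping $v$ for $w$ still yields a dominating set. So I need only track how each modification affects adjacencies and the dominating-set status of the swapped sets $(S\setminus\{w\})\cup\{v\}$.

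For modification (1), adding an edge, the point is that adding edges can only help: any set that was dominating in $G$ remains dominating in $G+e$, since every domination relationship that held before still holds and no vertex loses a dominator. I would argue that $S$ is still dominating, and that for each $v\notin S$ the same guard $w$ works, because the relevant swapped set $(S\setminus\{w\})\cup\{v\}$ was dominating in $G$ and hence remains dominating after adding an edge. For modification (2), deleting an edge $vw$ with both endpoints in $S$, the crucial remark is that such an edge is never \emph{needed} for any domination check: a vertex $u\notin S$ is dominated by some vertex of $S$, but it can never rely on the edge $vw$ for this since that edge joins two vertices of $S$ and is irrelevant to covering a vertex outside $S$. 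More carefully, for any swapped set $(S\setminus\{w'\})\cup\{v'\}$ arising in a secure-domination check, at most one of the endpoints $v,w$ is removed from $S$, so the edge $vw$ again plays no role in whether that set dominates a third vertex; here I would confirm that deleting $vw$ does not cause any vertex to become undominated.

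For modification (3), deleting a vertex $v\notin S$ together with its incident edges, I would first note that $v$ was not serving as a dominator or guard for any other vertex in a way that cannot be replaced—but more directly, removing $v$ simply removes one vertex from every domination requirement. I would check that $S\subseteq V\setminus\{v\}$ still dominates every remaining vertex (a vertex $u\neq v$ was dominated by some $s\in S$, and since $v\notin S$ we have $s\neq v$, so that domination survives), and that for each remaining $u\notin S$ the same guard as before still repels an attack, since the swapped dominating set only needed to dominate vertices other than those removed. The mild subtlety is confirming that the swapped set still dominates: its only possible failure would be if it had relied on $v$ to dominate something, but $v\notin S$ means $v$ never belonged to any such set in the first place. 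Finally, since each modification preserves the secure dominating property independently, any combination does too, applied one at a time.

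I expect the main obstacle to be stating the secure-domination check cleanly enough that the edge-deletion case (2) is genuinely airtight: one must be careful that when we delete $vw$, no swapped set $(S\setminus\{w'\})\cup\{v'\}$ was implicitly using the edge $vw$ to dominate a vertex, and the clean way to see this is that any vertex needing domination either lies outside $S$ (and is covered by an edge to $S$ other than the deleted one, since the deleted edge has both ends in $S$) or is handled by being in the swapped set itself. The other cases are essentially monotonicity arguments and should be routine.
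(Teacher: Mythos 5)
Your proposal is correct and takes essentially the same approach as the paper: a case-by-case check that each modification preserves both the domination condition and the existence of a guard for every vertex outside $S$, with combinations handled by applying the modifications one at a time. If anything, your treatment of cases (2) and (3) is more careful than the paper's rather terse argument, since you explicitly track what the swapped sets $(S\setminus\{w\})\cup\{v\}$ must dominate.
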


\begin{proof}In order for $S$ to be secure dominating in a graph, the following must be satisfied. For every $v \not\in S$, there must be a $w \in S$ such that $vw \in E$, and $(S \setminus \{w\}) \cup \{v\} $ is dominating. In this situation, we will say that {\em $w$ guards $v$}.

Consider any vertex $v \not\in S$. In $G$, there exists a $w$ which guards $v$. This remains true even if another edge is added, and hence $S$ is still secure dominating.


Next, consider two vertices $v \in S$ and $w \in S$. Neither vertex requires another to guard them, and hence the deletion of $vw$ does not prevent $S$ from being secure dominating.

Finally, consider a vertex $v \not\in S$. If it is deleted, the requirement to find a vertex which guards $v$ is removed. Its incident edges are also deleted, however since $v \not\in S$, it could never have been the case that $v$ guarded another vertex. Hence, $S$ is still secure dominating.\end{proof}

Recall that the {\em open neighborhood} of a vertex $v$ is the set of vertices that are adjacent to $v$ and is denoted $N(v)$.

\begin{lem}Suppose that $S$ is a secure dominating set for a graph $G$. If an edge $vw$, with $v \notin S$ and $w \notin S$, satisfies at least one of the following
\begin{enumerate}\item $|N(v) \cap S|>1$ and $|N(w) \cap S|>1$;
\item $|N(v) \cap N(w) \cap S | \neq 1$,
\end{enumerate}
then, edge $vw$ may be deleted and $S$ is still secure dominating for the resulting graph. \label{thm-mods2}
\end{lem}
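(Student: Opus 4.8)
The plan is to verify directly that $S$ satisfies the definition of secure domination in the graph $G'$ obtained from $G$ by deleting the edge $vw$. Recall that this requires, for every vertex $u \notin S$, a guard $g \in N(u) \cap S$ such that $(S \setminus \{g\}) \cup \{u\}$ dominates $G'$. Since the only change from $G$ to $G'$ is the removal of $vw$, and since every guard lies in $S$ while $v,w \notin S$, no guard can lose its adjacency to the vertex it guards (an edge $ug$ with $g \in S$ is never the edge $vw$). Hence the whole difficulty is confined to checking that the various swapped sets $(S \setminus \{g\}) \cup \{u\}$ remain \emph{dominating} after the deletion.

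First I would dispose of the vertices $u \notin \{v,w\}$. For such a $u$, neither $v$ nor $w$ belongs to $(S \setminus \{g\}) \cup \{u\}$, since $v,w \notin S$ and $u \neq v,w$. The deleted edge therefore joins two vertices that lie outside this set, so it is irrelevant to whether the set dominates, and the guard that worked in $G$ still works in $G'$. This reduces the problem to the two symmetric cases $u = v$ and $u = w$.

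For $u = v$, let $g$ be a guard of $v$ in $G$, so that $D := (S \setminus \{g\}) \cup \{v\}$ dominates $G$. Passing to $G'$ removes only the edge $vw$, and because $v \in D$ while $w \notin D$, the single vertex whose domination could be destroyed is $w$. Thus I need only show that $w$ keeps a neighbour in $D$ other than $v$, that is, $N(w) \cap (S \setminus \{g\}) \neq \emptyset$; the case $u = w$ is then identical with the roles of $v$ and $w$ interchanged. It remains to establish this nonemptiness under each hypothesis. Under condition (1) we have $|N(w) \cap S| > 1$, so $w$ has at least two neighbours in $S$, at most one of which is $g$, giving a neighbour of $w$ in $S \setminus \{g\}$. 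The same conclusion holds under condition (2) when $|N(v) \cap N(w) \cap S| \geq 2$, since then $w$ already has at least two neighbours in $S$.

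The one genuinely delicate case — and the step I expect to be the crux — is condition (2) with $|N(v) \cap N(w) \cap S| = 0$. Here the argument is that the guard $g$ of $v$ cannot be adjacent to $w$: otherwise $g$ would lie in $N(v) \cap N(w) \cap S$, contradicting the emptiness of this set. Since $S$ dominates $G$ and $w \notin S$, the vertex $w$ has at least one neighbour in $S$, and that neighbour necessarily differs from $g$; hence $N(w) \cap (S \setminus \{g\}) \neq \emptyset$, as required. Assembling these cases shows that $w$ stays dominated, that the original guard of $v$ (and symmetrically of $w$) continues to work, and therefore that $S$ is secure dominating in $G'$, so the edge $vw$ may be deleted.
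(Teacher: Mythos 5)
Your proof is correct and follows essentially the same route as the paper: the only thing at stake after deleting $vw$ is whether $w$ (resp.\ $v$) retains a dominator once the guard of $v$ (resp.\ $w$) moves, and each hypothesis supplies a second $S$-neighbour of $w$ distinct from that guard. You simply spell out the case split for condition (2) (in particular the $|N(v)\cap N(w)\cap S|=0$ subcase, where the guard of $v$ cannot be adjacent to $w$) that the paper's one-line proof leaves implicit.
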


\begin{proof}
Consider any vertex $a \in S$ that guards $v$. Because $vw$ satisfies either 1. or 2. there must exist another vertex $a' \in N(w) \cap S$ where $a' \neq a$. Thus $S \setminus \{a\} \cup \{v\}$ is dominating in the graph with edge $vw$ deleted. The same argument can be applied to to any vertex that guards $w$ and thus $S$ is secure dominating in in the graph with edge $vw$ deleted.
\end{proof}

The following corollary emerges immediately from the third item in Lemma \ref{thm-mods}, along with the fact that $\gamma_s(G_1 \cup G_2) = \gamma_s(G_1) + \gamma_s(G_2)$.

\begin{cor}
Suppose that $S$ is a secure dominating set for a graph $G$ containing vertex set $V$, and there exists $U \subset V$ such that if an edge $ab$ exists for $a \in U$ and $b \not\in U$, then $a \not\in S$. Then if $G_2$ is constructed by deleting $U$ and its adjacent edges, $|S| \geq \gamma_s(G_2) + |U \cap S|$.\label{cor-trim}
\end{cor}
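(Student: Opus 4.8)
The plan is to reduce $G$ to a disjoint union by deleting exactly those vertices of $U$ that connect it to the rest of the graph, and then to invoke the additivity of the secure domination number over connected components. First I would isolate the set $B \subseteq U$ consisting of every vertex of $U$ that is incident to some edge leaving $U$. The hypothesis says precisely that $B \cap S = \emptyset$, since any such boundary vertex $a$ is the $U$-endpoint of a crossing edge and is therefore excluded from $S$. Because each vertex of $B$ lies outside $S$, I can delete the vertices of $B$ one at a time, each deletion justified by the third item of Lemma~\ref{thm-mods}, so that $S$ remains a secure dominating set for the resulting graph $G'$.

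Next I would verify that $G'$ is the disjoint union of $G_2 = G[V \setminus U]$ and $H = G[U \setminus B]$. Indeed, any edge of $G$ joining $U$ to $V \setminus U$ has its $U$-endpoint in $B$ by the definition of $B$, so all such crossing edges disappear once $B$ is removed; meanwhile no vertex of $V \setminus U$ is touched, so the component on $V \setminus U$ is exactly $G_2$. Hence $S$ is secure dominating for $G_2 \cup H$.

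I would then apply the fact that $\gamma_s(G_1 \cup G_2) = \gamma_s(G_1) + \gamma_s(G_2)$ — or more precisely the observation underlying it, that a secure dominating set of a disjoint union restricts to a secure dominating set on each component, since a guard may only relocate along edges and hence stays within its own component. This yields that $S \cap (V \setminus U)$ secure dominates $G_2$, so $|S \cap (V \setminus U)| \geq \gamma_s(G_2)$. Finally, since $B \cap S = \emptyset$, every element of $S \cap U$ actually lies in $U \setminus B$, giving $|S \cap U| = |U \cap S|$, and summing the two disjoint contributions produces $|S| = |S \cap (V \setminus U)| + |S \cap U| \geq \gamma_s(G_2) + |U \cap S|$.

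I expect the only delicate point to be the justification that the single-vertex deletions of Lemma~\ref{thm-mods} may be chained to strip out the entire boundary set $B$, together with the clean separation of $G'$ into the two components $G_2$ and $H$; once that structure is established, the counting step is immediate. Degenerate cases — for instance $U \setminus B = \emptyset$, in which case the bound reads $|S| \geq \gamma_s(G_2)$, or $U$ having no boundary vertices at all — fall out automatically and require no separate treatment.
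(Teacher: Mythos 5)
Your proof is correct and follows exactly the route the paper intends: the paper gives no detailed argument, merely asserting that the corollary ``emerges immediately'' from item 3 of Lemma~\ref{thm-mods} together with the additivity $\gamma_s(G_1 \cup G_2) = \gamma_s(G_1) + \gamma_s(G_2)$, which is precisely your deletion of the (non-$S$) boundary vertices of $U$ followed by the component-wise restriction argument. Your write-up simply makes explicit the chaining of single-vertex deletions and the resulting decomposition into $G_2$ and $G[U\setminus B]$, which the paper leaves implicit.
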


Since the results we will establish in this paper are for graphs which emerge from Cartesian products of $P_2$ or $P_3$, with another graph, the resulting graphs may be thought of as containing many copies of $P_2$ or $P_3$. For $P_2 \Box P_n$ and $P_3 \Box P_n$, it will sometimes be important to refer to specific copies. In such cases, we will use the notation $P_2^i$ and $P_3^i$ to refer to the $i$-th copy, for $i \in \{1,2,\dots,n\}$. In other cases, we simply want to consider a set of $m$ consecutive copies of arbitrary starting position. In such cases, we will use the notation $G^1, \hdots G^m$, and still refer to $G^i$ as the $i$-th copy within the set. In both cases, we will also use the notation $v^i_j$ to refer to the $j$-th vertex in the $i$-th copy, where $j=1$ refers to the top vertex (in the context of Cartesian products with paths) or the outer vertex (in the context of Cartesian products with cycles) in an embedding in the plane.

In the upcoming proofs, a common technique will be to consider how many vertices from each copy are contained in $S$. In general, we will say that if a copy contains $i$ vertices in $S$, then it is a {\em type $i$ copy}, and we will use $m_i$ to denote the number of type $i$ copies for $0 \leq i \leq 3$. Clearly, for $P_2 \Box P_n$ and $P_2 \Box C_n$, $m_3 = 0$ by definition. Then, it is clear that $m_0 + m_1 + m_2 + m_3 = n$, but also $m_1 + 2m_2 + 3m_3 = |S|$. Combining these, we obtain the below equation, which will be regularly used to establish bounds on $m_i$.

\begin{eqnarray}m_0 = n - |S| + m_2 + 2m_3.\label{eq-balance}\end{eqnarray}

We will use the term {\em block} of length $l$ to refer to a set of $l$ consecutive copies such that the first and last copies have type 0, and the other copies do not have type 0. Clearly, we can use the vertices of a block as the set $U$ in Corollary \ref{cor-trim}. Furthermore, it will sometimes be useful to take advantage of the following result.

\begin{lem}Suppose that $S$ is a secure dominating set for a graph $G \Box C_n$. Then if we have a block of length $l$ containing the vertices $U$, then $|S| \geq \gamma_s(G \Box C_{n-l}) + |U \cap S|$. Also, if we have a single type 0 copy, then $|S| \geq \gamma_s(G \Box C_{n-1})$.\label{lem-smoothing}\end{lem}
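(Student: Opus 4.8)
The plan is to reduce the problem to Corollary \ref{cor-trim} by showing that a block (or a single type 0 copy) yields a valid choice of $U$, and then to argue that contracting away this block produces exactly the smaller graph $G \Box C_{n-l}$. Recall that a block of length $l$ consists of $l$ consecutive copies $G^1, \dots, G^l$ in which $G^1$ and $G^l$ are type 0, and all intermediate copies are non-type-0. First I would take $U$ to be the vertices of the intermediate copies $G^2, \dots, G^{l-1}$ together with the vertices of one of the two boundary copies, say $G^1$; the key point is to select $U$ so that no edge leaves $U$ from a vertex of $S$.

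\textbf{Verifying the hypothesis of Corollary \ref{cor-trim}.} The obstacle is to confirm that every edge $ab$ with $a \in U$ and $b \notin U$ has $a \notin S$. Edges within the Cartesian product $G \Box C_n$ are of two kinds: edges inside a single copy, and edges joining consecutive copies (the ``cycle direction'' edges). For an edge leaving $U$, the endpoint $b \notin U$ must lie in a copy adjacent to the boundary of $U$. Since $G^1$ is a type 0 copy, all of its vertices are outside $S$, so any edge from $G^1$ leaving $U$ has its $U$-endpoint not in $S$, as required. The same holds at the other boundary of $U$: the copy just past $G^{l-1}$ inside $U$ is $G^{l-1}$, whose neighbor outside $U$ is $G^l$, which is type 0 — but here one must be careful about which boundary copy is placed inside $U$ versus left outside. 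The cleanest choice is to put exactly one of the two type 0 copies into $U$ and leave the other as a ``seam'': then the two copies of $G$ that border the deleted region are the retained type 0 copy $G^l$ and the copy $G^1$ inside $U$, and because both bordering-into-$S$ vertices come from type 0 copies, no edge carrying an $S$-vertex crosses the boundary of $U$.

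\textbf{Identifying the contracted graph.} Once Corollary \ref{cor-trim} applies, deleting $U$ and its incident edges leaves a graph $G_2$, and the corollary gives $|S| \geq \gamma_s(G_2) + |U \cap S|$. I would then argue that $G_2$ is isomorphic to $G \Box C_{n-l}$: deleting $l$ consecutive copies from the cyclic arrangement $G \Box C_n$ leaves $n-l$ copies still arranged in a cycle, with the two newly-exposed boundary copies becoming adjacent — precisely the structure of $G \Box C_{n-l}$. Here the single retained type 0 copy $G^l$ reconnects to the copy preceding $G^1$, and since type 0 copies have no $S$-vertices, the reconnection does not disturb the secure domination status already guaranteed by the corollary. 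Finally, $|U \cap S|$ equals the total number of $S$-vertices in the removed copies, completing the bound $|S| \geq \gamma_s(G \Box C_{n-l}) + |U \cap S|$.

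\textbf{The single type 0 copy.} For the second statement, I take $l = 1$ formally: a single type 0 copy $G^i$ has all vertices outside $S$, so $U = V(G^i)$ trivially satisfies the hypothesis of Corollary \ref{cor-trim} with $|U \cap S| = 0$. Deleting it merges its two neighboring copies into adjacency, yielding $G \Box C_{n-1}$, and the corollary gives $|S| \geq \gamma_s(G \Box C_{n-1})$ directly. The main subtlety throughout is bookkeeping about which copies border the removed region and confirming that the cyclic adjacency is correctly restored; I expect this gluing step — verifying the resulting graph is genuinely $G \Box C_{n-l}$ rather than some graph with extra or missing seam edges — to be the part that requires the most care.
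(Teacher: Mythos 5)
There is a genuine gap, in two places. First, your choice of $U$ (the copies $G^1,\dots,G^{l-1}$, leaving $G^l$ outside) does not satisfy the hypothesis of Corollary \ref{cor-trim}. The copy $G^{l-1}$ is an internal copy of the block, hence not of type 0, so it contains vertices of $S$; each such vertex has an edge to the corresponding vertex of $G^l \notin U$, which is exactly an edge $ab$ with $a \in U \cap S$ and $b \notin U$. Your claim that ``no edge carrying an $S$-vertex crosses the boundary of $U$'' is therefore false on that side of the block. The remedy is simply to take $U$ to be all $l$ copies of the block (as the lemma's statement intends), so that every edge leaving $U$ emanates from one of the two type 0 end copies.

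Second, the identification of the resulting graph is wrong. Deleting vertices never creates new adjacencies, so removing consecutive copies from $G \Box C_n$ leaves a path of copies, i.e.\ a graph of the form $G \Box P_{n-l}$, not $G \Box C_{n-l}$; the ``newly-exposed boundary copies'' do not become adjacent by themselves. The paper's proof handles this by first disconnecting the block (as in Corollary \ref{cor-trim}) and then explicitly adding the edges between the two exposed end copies, which is legitimate by item 1 of Lemma \ref{thm-mods} because adding edges never destroys secure domination; this turns the remaining $n-l$ copies into $G \Box C_{n-l}$ while the block's $|U \cap S|$ vertices sit in a separate component. Your ``reconnection'' remark gestures at this, but the justification you give (that type 0 copies have no $S$-vertices) is not the relevant one, and with your choice of $U$ the reconnected cycle would in any case have $n-l+1$ copies, yielding a bound in terms of $\gamma_s(G \Box C_{n-l+1})$ rather than the claimed $\gamma_s(G \Box C_{n-l})$. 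Only the single-type-0-copy case of your argument goes through essentially as written, and even there the closing of the cycle must be justified by the edge-addition step.
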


\begin{proof}We handle both cases simultaneously. As with Corollary \ref{cor-trim}, we can delete the edges which connect $U$ to the rest of the graph, and $S$ remains secure dominating for the new, disconnected graph. Then, from item 1 of Lemma \ref{thm-mods} we can add edges to create a new graph $G_2$ which is the union of the subgraph induced by the vertices of $U$, and a smaller Cartesian product with $l$ fewer copies of $G$. The result then follows immediately.\end{proof}

Lemma \ref{lem-smoothing} is a special case of a more general technique, where a subgraph satisfying the conditions of Corollary \ref{cor-trim} can be essentially smoothed, rather than simply deleted. We will use a specific application of this technique in Lemma \ref{lem-P3Cn1s}.

We will use the term {\em pattern} to refer to a set of consecutive copies which have specified types. For instance, if we say that $S$ contains the pattern 101, it implies that there is a set of three consecutive copies where the first is of type 1, the second is of type 0, and the third is of type 1. In many cases, we will establish that certain patterns cannot exist in $S$, usually taking advantage of symmetry to avoid considering unnecessarily many cases. In order to illustrate this, we provide a short example here, which also serves to introduce the graphical notation that will be used.

\begin{exa}Suppose $S$ is a secure dominating set for $P_2 \Box P_n$. We will show that $S$ cannot contain the pattern 1010. Suppose that it does. We consider $G^1, \hdots, G^4$, a set of four consecutive copies of $P_2$, obeying the pattern 1010. Consider $G^1$ first; due to symmetry, it does not matter which vertex is in $S$. Hence, we will assume that $v^1_1 \in S$. Then, $G_2$ is of type 0 and so by the domination condition, it is clear that $v^3_2 \in S$ (in order to cover $v^2_2$). Note that since $G^3$ is type 1, this also implies that $v^3_1 \not\in S$. Then, if there is an attack at $v^2_2$, the guard at $v^3_2$ must move to assist, but this leaves $v^3_1$ unguarded, because $G^4$ is also type 0 copy. Hence, this situation cannot occur, and so $S$ cannot contain the pattern 1010. This situation is displayed in Figure \ref{fig-example}. Note that we use a solid circle to denote vertices which are fixed in $S$ by assumption, and hollow circles to denote vertices which we subsequently argue must appear in $S$. Likewise, we use crosses to denote those vertices which are not in $S$ by assumption, and a strikeout to denote vertices which we subsequently argue must not appear in $S$.\label{example}\end{exa}

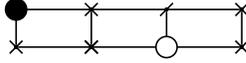
\begin{figure}[h!]\begin{center}\begin{tikzpicture}[smallgraph]\gridgraph{2}{4} \inS{1}{1} \inSa{3}{2} \notS{1}{2} \notS{2}{1} \notS{2}{2} \notS{2}{2} \notS{4}{1} \notS{4}{2} \notSa{3}{1}\end{tikzpicture}
\caption{The situation described in Example \ref{example}.\label{fig-example}}\end{center}\end{figure}

Finally, we note that each of the upcoming proofs will use induction. Hence, it will be necessary to establish the secure domination number for a certain number (depending on the proof) of small cases. Rather than present these small cases here, we will rely on the exact formulation given in Burdett and Haythorpe \cite{burdett} to handle these.

\section{Upper bounds\label{sec-upperbounds}}

In the proceeding sections, we will be establishing lower bounds. In order to obtain equalities, we will also require upper bounds, which we provide here.

\begin{thm}We have the following upper bounds:
\begin{eqnarray*}\gamma_s(P_2 \Box P_n) & \leq & \left\lceil \frac{3n+1}{4} \right\rceil\mbox{ for }n \geq 2,\\
\gamma_s(P_2 \Box C_n) & \leq & \frac{3n}{4}\mbox{ for }n = 0\mbox{ mod }8, n \geq 8,\\
\gamma_s(M_{2n}) & \leq & \frac{3n}{4}\mbox{ for }n = 4\mbox{ mod }8, n \geq 12,\\
\gamma_s(P_3 \Box P_n) & \leq & n+2\mbox{ for }n \geq 2,\\
\gamma_s(P_3 \Box C_n) & \leq & 3\left\lceil \frac{n}{3} \right\rceil\mbox{ for }n \geq 3.\end{eqnarray*}\end{thm}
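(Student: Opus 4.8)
Since the theorem asserts upper bounds, the plan is to exhibit, for each of the five families, a single explicit set $S$ and to verify directly that it is secure dominating; its cardinality is then read off to match the stated value, so no lower-bound reasoning is needed here. Each $S$ will be periodic, described by a short repeating \emph{tile} of consecutive copies (drawn in the solid-circle/cross notation of Example \ref{example}) together with a bounded correction at the path endpoints or at the cyclic seam. For every family the check splits into two parts: that $S$ is dominating, which is routine on a single tile; and the secure condition, that every $v\notin S$ has a guard $w\in S$ with $(S\setminus\{w\})\cup\{v\}$ still dominating. I would organise the second part using the translational symmetry of the tile, so that only the finitely many $v\notin S$ inside one period, plus the seam/boundary cells, require attention.

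For $P_3\Box C_n$ I would use the ``screw'' set that puts exactly one vertex in each copy, cycling the chosen row through top, middle, bottom, top, $\dots$; this has $n$ vertices and is invariant under shifting one copy while rotating the row, so the two non-$S$ cells in each of the three copy-types exhaust all interior cases. When $3\nmid n$ the pattern cannot close up, and I would repair the seam with one extra copy, giving $3\lceil n/3\rceil$. For $P_3\Box P_n$ the same screw set is laid along the path; the two ends no longer wrap, so after a short case analysis on $n\bmod 3$ I would add one vertex at each end to restore domination and guardability of the end copies, yielding the $n+2$ bound.

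For the three ladder-type families I would use a single period-$8$ tile of density $3/4$ (three vertices per four copies). The period must be taken as $8$ rather than $4$: the period-$4$ patterns of this density can be shown to fail the secure condition, each containing a cell whose only available guard is itself the sole dominator of a further cell. The tile will be chosen with a \emph{glide} symmetry, namely translation by four copies composed with interchanging the two rows. Hence it closes up into the prism $P_2\Box C_n$ exactly when the number of half-period glides $n/4$ is even, i.e.\ $n\equiv0\bmod 8$, and into the Möbius ladder $M_{2n}$, whose single half-twist applies one net row-swap, exactly when $n/4$ is odd, i.e.\ $n\equiv4\bmod 8$; this explains both congruences at once. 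For $P_2\Box P_n$ the tile is placed along the path for all $n\ge 2$, and a short analysis on $n\bmod 8$ with a bounded repair at each end produces $\lceil(3n+1)/4\rceil$.

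The main obstacle throughout is the secure condition, not domination. Exploratory patterns are easy to make dominating but repeatedly fail to be secure, because a cell $v\notin S$ can be dominated while its only $S$-neighbour $w$ is load-bearing elsewhere, so that moving $w$ onto $v$ orphans a third cell; ruling out every such trap is precisely what dictates the particular tiles and the modular hypotheses. The most delicate points are the seams, where the cyclic wrap, or the Möbius row-flip, can reintroduce a unique-dominator dependence that is absent in the interior, and the path endpoints, where a boundary cell loses the neighbour it relied on under wraparound. I would therefore devote most of the effort to these seam and boundary cells, confirming in each instance that a safe, non-load-bearing guard is available.
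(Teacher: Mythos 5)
Your proposal matches the paper's proof in method: both simply exhibit explicit periodic vertex patterns of the stated densities (a period-8, density-$\tfrac34$ tile for the three ladder families and a one-vertex-per-copy diagonal ``screw'' for the $P_3$ families, with bounded corrections at path ends or the cyclic seam) and reduce the verification of secure domination to finitely many cells per period. The only divergence is in the choice of tile for $P_2 \Box C_n$ and $M_{2n}$ --- the paper's cycle tile contains a type-2 copy (the pattern $01102011$) and its M\"{o}bius construction uses a special four-copy prefix, whereas your glide-symmetric all-type-1 tile treats both seams uniformly and derives the two congruences from the parity of $n/4$ --- but both constructions are valid and the approach is essentially identical.
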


\begin{proof}The upper bound for $P_3 \Box P_n$ was established in Cockayne et al \cite{cockayne}. For the other cases, we will provide drawings which corresponding to a secure dominating set $S$ which meets the upper bounds. Verifying that $S$ is secure dominating is left as an exercise to the reader.

For $P_2 \Box P_n$, the vertex patterns in the first and second items shown in Figure \ref{fig-ubP2Pn} can be repeated, alternating between them. Suppose that we do so, obtaining a vertex pattern for a graph with $4\left\lceil \frac{n}{4} \right\rceil$ vertices. We now describe how to modify this to obtain a valid $S$ for $P_2 \Box P_n$. If $n = 0\mbox{ mod }4$, simply add either of the vertices from the final copy to $S$. If $n = 1\mbox{ mod }4$, delete the first, and the final two copies. For $n = 2\mbox{ mod }4$, delete the final two copies. For $n = 3\mbox{ mod }4$, delete the final copy.

For $P_2 \Box C_n$, if $n = 0\mbox{ mod }8$, then the vertex pattern shown in Figure \ref{fig-ubP2Cn} can be repeated as many times as needed.

For $M_{2n}$, if $n = 4\mbox{ mod }8$, then the vertex pattern shown in Figure \ref{fig-ubM2n} can be followed by as many copies of the vertex pattern shown in Figure \ref{fig-ubP2Cn} as needed.

For $P_3 \Box C_n$, the first item in Figure \ref{fig-ubP3Cn} shows the vertex pattern which can be repeated as many times as needed. Suppose that we do so, obtaining a vertex pattern for a graph with $3\left\lfloor \frac{n}{3} \right\rfloor$ vertices. We now describe how to modify this to obtain a valid $S$ for $P_3 \Box C_n$. If $n = 0\mbox{ mod }3$, no modification is necessary. If $n = 1\mbox{ mod }3$, the second item in Figure \ref{fig-ubP3Cn} should be added as a final copy. If $n = 2\mbox{ mod }3$, the third item in Figure \ref{fig-ubP3Cn} should be added as the final two copies.\end{proof}

\begin{figure}[h!]\begin{center}
\begin{minipage}{0.3\textwidth}\begin{center}\begin{tikzpicture}[smallgraph]
\gridgraph{2}{4}
\inS{1}{1} \inS{2}{2} \inS{3}{1} \notS{1}{2} \notS{2}{1} \notS{3}{2} \notS{4}{1} \notS{4}{2}
\end{tikzpicture}\end{center}\end{minipage}\begin{minipage}{0.3\textwidth}\begin{center}\begin{tikzpicture}[smallgraph]
\gridgraph{2}{4}
\inS{1}{2} \inS{2}{1} \inS{3}{2} \notS{1}{1} \notS{2}{2} \notS{3}{1} \notS{4}{1} \notS{4}{2}
\end{tikzpicture}\end{center}\end{minipage}
\caption{The vertex patterns for $P_2 \Box P_n$, which can be repeated as many times as needed, alternating between them. The result can then be modified accordingly, as described above.\label{fig-ubP2Pn}}\end{center}\end{figure}
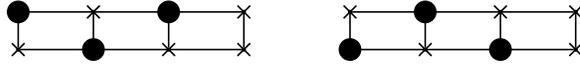

\begin{figure}[h!]\begin{center}
\begin{tikzpicture}[smallgraph]
\gridgraph{2}{8}
\inS{2}{1} \inS{3}{2} \inS{5}{1} \inS{5}{2} \inS{7}{1} \inS{8}{2} \notS{1}{1} \notS{1}{2} \notS{2}{2} \notS{3}{1} \notS{4}{1} \notS{4}{2} \notS{6}{1} \notS{6}{2} \notS{7}{2} \notS{8}{1}
\end{tikzpicture}
\caption{The vertex pattern which can be repeated as many times as needed for $P_2 \Box C_n$, $n = 0\mbox{ mod }8$.\label{fig-ubP2Cn}}\end{center}\end{figure}
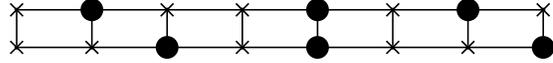

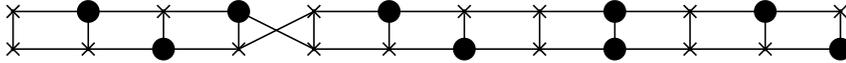
\begin{figure}[h!]\begin{center}
\begin{tikzpicture}[smallgraph]
\gridgraph{2}{4}
\foreach \n in {1,2} {\foreach \m in {5,...,12} {\node (\n\m) at (\m,-0.5*\n) {};}}
\foreach \n in {1,...,2} {\draw (\n5) -- (\n12);}
\foreach \n in {5,...,12} {\draw (1\n) -- (2\n);}
\inS{2}{1} \inS{3}{2} \inS{4}{1} \notS{1}{1} \notS{1}{2} \notS{2}{2} \notS{3}{1} \notS{4}{2}
\inS{6}{1} \inS{7}{2} \inS{9}{1} \inS{9}{2} \inS{11}{1} \inS{12}{2} \notS{5}{1} \notS{5}{2} \notS{6}{2} \notS{7}{1} \notS{8}{1} \notS{8}{2} \notS{10}{1} \notS{10}{2} \notS{11}{2} \notS{12}{1}
\draw (14) -- (25); \draw(15) -- (24);
\end{tikzpicture}
\caption{The vertex pattern for $M_{2n}$, $n = 4\mbox{ mod }8$, which may then be followed by as many copies of the vertex pattern from Figure \ref{fig-ubP2Cn} as needed.\label{fig-ubM2n}}\end{center}\end{figure}

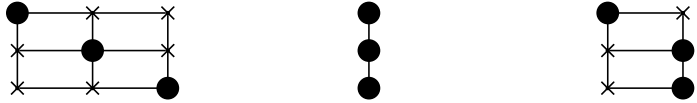
\begin{figure}[h!]\begin{center}
\begin{minipage}{0.25\textwidth}\begin{center}\begin{tikzpicture}[smallgraph]
\gridgraph{3}{3}
\inS{1}{1} \inS{2}{2} \inS{3}{3} \notS{1}{2} \notS{1}{3} \notS{2}{1} \notS{2}{3} \notS{3}{1} \notS{3}{2}
\end{tikzpicture}\end{center}\end{minipage}\begin{minipage}{0.25\textwidth}\begin{center}\begin{tikzpicture}[smallgraph]
\gridgraph{3}{1}
\inS{1}{1} \inS{1}{2} \inS{1}{3}
\end{tikzpicture}\end{center}\end{minipage}\begin{minipage}{0.25\textwidth}\begin{center}\begin{tikzpicture}[smallgraph]
\gridgraph{3}{2}
\inS{1}{1} \inS{2}{2} \inS{2}{3} \notS{1}{2} \notS{1}{3} \notS{2}{1}
\end{tikzpicture}\end{center}\end{minipage}
\caption{The vertex patterns for $P_3 \Box C_n$. The first item can be repeated as many times as needed. Then if $n = 1\mbox{ mod }3$, the second item is used as the final copy. If $n = 2\mbox{ mod }3$, the third item is used as the final two copies.\label{fig-ubP3Cn}}\end{center}\end{figure}

\section{The secure domination number of $P_2 \Box P_n$}

We now consider the graph $P_2 \Box P_n$ for $n \geq 2$. In Section \ref{sec-upperbounds}, we established that $\gamma_s(P_2 \Box P_n) \leq \left\lceil \frac{3n+1}{4} \right\rceil$ for $n \geq 2$. We will now prove that equality holds.

For small cases, we are able to use the exact formulation from \cite{burdett}, and indeed, we have done so to show that equality holds for $n \leq 16$. However, it could be the case that there is some value $k \geq 17$ such that equality holds for $n = 2, \hdots, k-1$, but $\gamma_s(P_2 \Box P_k) \leq \left\lceil \frac{3k-3}{4} \right\rceil = \left\lfloor \frac{3k}{4} \right\rfloor$. If so, then there exists a secure dominating set $S$ for $P_2 \Box P_k$ such that $|S| = \left\lfloor \frac{3k}{4} \right\rfloor.$ We will now show that this is impossible.


Recall that $P_2 \Box P_k$ contains $k$ copies of $P_2$, which we call $P_2^1, P_2^2, \hdots, P_2^k$. We will refer to $P_2^1$ and $P_2^k$ as ``end" copies, $P_2^2$ and $P_2^{k-1}$ as ``second-end" copies, and all other copies as ``internal copies". In this case, equation (\ref{eq-balance}) becomes:

\begin{equation}m_0 = \left\lceil \frac{k}{4} \right\rceil + m_2.\label{eq-P2Pnm0}\end{equation}

Note that, since $k \geq 17$, we have $m_0 \geq 5$. Hence, there is at least one internal copy of type 0.

\begin{lem}Suppose that $S$ is a secure dominating set for $P_2 \Box P_k$ such that $|S| = \left\lceil \frac{3k-3}{4} \right\rceil$. Furthermore, suppose that for all $n = 2, \hdots, k-1$, we have $\gamma_s(P_2 \Box P_n) = \left\lceil \frac{3n+1}{4} \right\rceil$. Then if $P_2^2$ is of type 0, it must be the case that $P_2^1$ is of type 1, and $P_2^3$ is of type 2. Similarly, if $P_2^{k-1}$ is of type 0, it must be the case that $P_2^k$ is of type 1, and $P_2^{k-2}$ is of type 2.\label{lem-P2PnG2}\end{lem}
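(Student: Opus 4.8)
The plan is to prove only the first statement; the second then follows by the left--right symmetry of $P_2 \Box P_k$. So assume $P_2^2$ is of type 0, and write the two vertices of $P_2^i$ as $v_1^i,v_2^i$. The first, easy step is to rule out $P_2^1$ being of type 0: the vertex $v_1^1$ has neighbours only $v_2^1$ and $v_1^2$, and if $P_2^1$ and $P_2^2$ were both type 0 then all of these lie outside $S$, so $v_1^1$ would be undominated. Hence $P_2^1$ is of type 1 or type 2.

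The central step is to exclude type 2, and I expect this to be the main obstacle, because a type-2 copy $P_2^1$ sitting next to the empty copy $P_2^2$ violates no \emph{local} secure-domination condition (if $v_2^1 \in S$ then $v_1^1$ may legitimately move to $v_1^2$). The impossibility is therefore global, and I would extract it from a counting bound via Corollary \ref{cor-trim}. Take $U$ to be the four vertices of $P_2^1$ and $P_2^2$; the only edges leaving $U$ are $v_1^2 v_1^3$ and $v_2^2 v_2^3$, both incident to $P_2^2$, whose endpoints in $U$ are not in $S$, so the hypothesis of Corollary \ref{cor-trim} holds. Deleting $U$ yields $P_2 \Box P_{k-2}$ and gives $|S| \ge \gamma_s(P_2 \Box P_{k-2}) + |U \cap S|$. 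If $P_2^1$ were of type 2 then $|U \cap S| = 2$, while the inductive hypothesis gives $\gamma_s(P_2 \Box P_{k-2}) = \lceil (3k-5)/4 \rceil$ and the hypothesis $|S| = \lceil (3k-3)/4 \rceil$. Since the arguments $3k-5$ and $3k-3$ differ by $2 < 4$, these two ceilings differ by at most $1$, so $|S| < \gamma_s(P_2 \Box P_{k-2}) + 2$, a contradiction. Thus $P_2^1$ is of type 1.

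It then remains to show $P_2^3$ is of type 2, which I would do by a short local argument. With $P_2^1$ of type 1, up to symmetry say $v_1^1 \in S$ and $v_2^1 \notin S$. Domination of $v_2^2$ forces $v_2^3 \in S$, since its only neighbours are $v_2^1 \notin S$, $v_1^2 \notin S$ and $v_2^3$. For $v_1^2$, its only possible guards are $v_1^1$ and $v_1^3$; but $v_1^1$ cannot guard it, because moving $v_1^1$ to $v_1^2$ would leave $v_2^1$ (whose remaining neighbour $v_2^2$ is not in $S$) undominated. Hence $v_1^3 \in S$, so both vertices of $P_2^3$ lie in $S$ and $P_2^3$ is of type 2. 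The only delicate point is the ceiling estimate in the second paragraph, and I would emphasise that it is exactly the numerical slack $2<4$ that makes the type-2 elimination work uniformly in $k$ without splitting into residue classes.
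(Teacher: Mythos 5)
Your proposal is correct and follows essentially the same route as the paper: domination rules out type 0 for $P_2^1$, Corollary \ref{cor-trim} applied to the first two copies rules out type 2 via the ceiling comparison $\left\lceil \frac{3k-3}{4} \right\rceil < \left\lceil \frac{3k-5}{4} \right\rceil + 2$, and a local guarding argument forces both vertices of $P_2^3$ into $S$. The only cosmetic difference is that you phrase the last step as asking which vertex guards $v^2_1$ rather than considering an attack at $v^1_2$, which is the same argument; you also spell out the ceiling estimate that the paper leaves implicit.
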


\begin{proof}Suppose that $P_2^2$ is of type 0. In order to satisfy the domination condition, $P_2^1$ must either be of type 1 or type 2. Suppose it is of type 2, then we can delete $P_2^1$ and $P_2^2$, obtaining $P_2 \Box P_{k-2}$. Then, from Corollary \ref{cor-trim} we have $|S| \geq \left\lceil \frac{3k-5}{4} \right\rceil + 2$, which is a contradiction. Hence, $P_2^1$ is of type 1.

Then, due to symmetry, we can select either vertex of $P_2^1$ to be in $S$. Assume $v^1_1 \in S$. Then, in order to satisfy the domination property, $v^3_2 \in S$. Also, if there is an attack at $v^1_2$, the guard at $v^1_1$ must move to assist, which implies that $v^3_1 \in S$ in order to avoid leaving $v^2_1$ unguarded. Hence $P_2^3$ must be of type 2. This situation is displayed in Figure \ref{fig-P2PnG2}.

Due to symmetry, an analagous argument can be made for $P_2^{k-1}$, completing the proof.\end{proof}

\begin{figure}[h!]\begin{center}\begin{tikzpicture}[smallgraph]\gridgraph{2}{3} \inS{1}{1} \inSa{3}{1} \inSa{3}{2} \notS{1}{2} \notS{2}{1} \notS{2}{2}\end{tikzpicture}
\caption{The situation described in Lemma \ref{lem-P2PnG2}.\label{fig-P2PnG2}}\end{center}\end{figure}
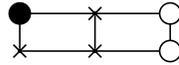

\begin{lem}Suppose that $S$ is a secure dominating set for $P_2 \Box P_k$. Then $S$ cannot contain the pattern 101101.\label{lem-P2Pn101101}\end{lem}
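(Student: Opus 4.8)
The plan is to fix a copy of the pattern $101101$, label the six consecutive copies $G^1,\dots,G^6$ with types $1,0,1,1,0,1$, and derive a contradiction from the secure domination requirements. First I would exploit the top--bottom reflection automorphism of $P_2 \Box P_k$ (which swaps $v^i_1 \leftrightarrow v^i_2$ in every copy, and preserves both the type of each copy and the property of being secure dominating) to assume without loss of generality that the single $S$-vertex of the type-1 copy $G^1$ is $v^1_1$; hence $v^1_2 \notin S$.

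Next I would propagate forced membership through the pattern, using the domination condition across the type-0 copies and the guard-move condition to cross the central type-1 pair $G^3,G^4$. Since $G^2$ is of type $0$, the vertex $v^2_2$ must be dominated; its only neighbours are $v^1_2$ (now excluded), $v^2_1$, and $v^3_2$, so $v^3_2 \in S$ and therefore $v^3_1 \notin S$. The crucial step is to consider an attack at $v^2_2$: its unique neighbour in $S$ is $v^3_2$, so $v^3_2$ is forced to be its guard, and for $(S\setminus\{v^3_2\})\cup\{v^2_2\}$ to remain dominating the vertices $v^3_1$ and $v^4_2$, which lose $v^3_2$ as their dominator, must be covered otherwise; as their only remaining candidate neighbour is $v^4_1$, this forces $v^4_1 \in S$ and hence $v^4_2 \notin S$. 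Applying the domination condition at $v^5_2$ in the type-0 copy $G^5$, whose neighbours are $v^4_2 \notin S$, $v^5_1$, and $v^6_2$, then forces $v^6_2 \in S$ and thus $v^6_1 \notin S$.

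The contradiction comes from an attack at $v^3_1$. Its only neighbours in $S$ are $v^3_2$ and $v^4_1$, so one of these must guard it. If $v^3_2$ moves to $v^3_1$, then $v^2_2$ (neighbours $v^1_2, v^2_1, v^3_2$, all now outside $S$) is left undominated; if instead $v^4_1$ moves to $v^3_1$, then $v^5_1$ (neighbours $v^4_1, v^6_1, v^5_2$, all now outside $S$) is left undominated. Since neither candidate guard can defend $v^3_1$, no secure dominating set can realise the pattern $101101$.

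I expect the main obstacle to be organising the forcing so that exactly the right vertices are pinned down, in particular the guard-move argument that produces $v^4_1 \in S$; one must be careful to track every vertex that relied on the moving guard rather than only the ``obvious'' one. A secondary point worth checking, and a natural source of worry, is whether copies lying outside the six in question (when the pattern occurs internally) could supply an alternative guard or dominator and so rescue the configuration. This turns out not to happen: every vertex invoked in the final contradiction, namely $v^3_1$ together with the vertices $v^2_2$ and $v^5_1$ that become undominated, has all of its neighbours among $G^2,\dots,G^5$, so the argument is insensitive to the surrounding graph, and the single symmetry reduction together with the forced chain covers all cases.
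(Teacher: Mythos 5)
Your proof is correct and takes essentially the same route as the paper's: the same symmetry reduction fixing $v^1_1 \in S$, the same forcing chain $v^3_2, v^4_1, v^6_2 \in S$ via the domination condition and the attack at $v^2_2$, and the same final contradiction from an attack at $v^3_1$ leaving either $v^2_2$ or $v^5_1$ undominated. Your added remark that the contradiction is insensitive to the surrounding copies is a sound (if implicit in the paper) observation.
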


\begin{proof}Suppose that we have six consecutive copies of $P_2$, call them $G^1, \hdots, G^6$, obeying the pattern 101101. We denote $v^i_1$ and $v^i_2$ to be the top and bottom vertices of $G^i$, respectively. Due to symmetry, it doesn't matter which vertex of $G^1$ is in $S$. Assume $v^1_1 \in S$. By the domination condition, $v^3_2 \in S$. Then, if there is an attack at $v^2_2$, the guard at $v^3_2$ must move to assist, which implies that $v^4_1 \in S$ in order to avoid leaving $v^3_1$ unguarded. Finally, by the domination condition, $v^6_2 \in S$. This situation is displayed in Figure \ref{fig-P2Pn101101}. Now, suppose there is an attack at $v^3_1$. Either the guard at $v^3_2$, or the guard at $v^4_1$ must move to assist. In the former case, $v^2_2$ is left unguarded. In the latter case, $v^5_1$ is left unguarded. This implies that $S$ is not secure dominating, which is a contradiction, and hence $S$ cannot contain the pattern 101101.\end{proof}

\begin{figure}[h!]\begin{center}\begin{tikzpicture}[smallgraph]\gridgraph{2}{6} \inS{1}{1} \inSa{3}{2} \inSa{4}{1} \inSa{6}{2} \notS{1}{2} \notS{2}{1} \notS{2}{2} \notSa{3}{1} \notSa{4}{2} \notS{5}{1} \notS{5}{2} \notSa{6}{1}\end{tikzpicture}
\caption{The situation described in Lemma \ref{lem-P2Pn101101}.\label{fig-P2Pn101101}}\end{center}\end{figure}
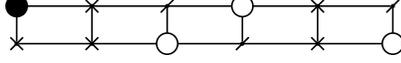

\begin{lem}Suppose that $S$ is a secure dominating set for $P_2 \Box P_k$ such that $|S| = \left\lceil \frac{3k-3}{4} \right\rceil$. Furthermore, suppose that for all $n = 2, \hdots, k-1$, we have $\gamma_s(P_2 \Box P_n) = \left\lceil \frac{3n+1}{4} \right\rceil$. Then, $k = 0\mbox{ mod }4$, and any internal copy $P_2^m$ can only be of type 0 if $m \mbox{ mod }4 \in \{2,3\}$.\label{lem-P2Pn0mod4}\end{lem}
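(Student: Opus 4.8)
The plan is to drive everything through the deletion operation of Corollary~\ref{cor-trim}, using the inductive hypothesis in the convenient form $\gamma_s(P_2 \Box P_n) = \lfloor 3n/4 \rfloor + 1$ for $2 \le n \le k-1$ (which is just a rewriting of $\lceil (3n+1)/4 \rceil$). Since $|S| = \lfloor 3k/4 \rfloor$, the set $S$ is exactly one vertex short of what the induction predicts, and this slack of one is what makes the structure rigid. First I would note that $k \ge 17$ forces $m_0 \ge 5$ through (\ref{eq-P2Pnm0}), and since at most four type $0$ copies can sit at the positions $1,2,k-1,k$, at least one internal type $0$ copy $P_2^m$ must exist. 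The key step is to delete such a copy: taking $U = \{v^m_1, v^m_2\}$ in Corollary~\ref{cor-trim} is legitimate because $P_2^m$ is type $0$, so every edge leaving $U$ has its $U$-endpoint outside $S$, and the deletion leaves $S$ secure dominating on the disjoint union $(P_2 \Box P_{m-1}) \cup (P_2 \Box P_{k-m})$.

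Applying the inductive hypothesis to each piece (both lengths lie in $[2,k-1]$ exactly when $P_2^m$ is internal) gives $\lfloor 3k/4 \rfloor \ge \lfloor 3(m-1)/4 \rfloor + \lfloor 3(k-m)/4 \rfloor + 2$. I would then read this as a pure congruence: setting $r_x = 3(m-1) \bmod 4$ and $r_y = 3(k-m) \bmod 4$, it rearranges to $r_x + r_y \ge 5 + (3k \bmod 4)$, while $r_x + r_y \le 6$ always. This single inequality does most of the work. It is infeasible when $k \equiv 1$ or $2 \pmod{4}$, contradicting the existence of the internal type $0$ copy; it forces $m \equiv 2,3 \pmod{4}$ when $k \equiv 0 \pmod{4}$; and it forces $m \equiv 2 \pmod{4}$ when $k \equiv 3 \pmod{4}$. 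The case $k \equiv 0$ already delivers the stated conclusion, so the whole difficulty is concentrated in excluding $k \equiv 3$.

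The hard part will be this last case, because there the deletion inequality holds with equality and so yields no contradiction on its own; I would instead combine a global count with the end behaviour. When $k \equiv 3 \pmod{4}$ the internal type $0$ copies all lie at positions $\equiv 2 \pmod{4}$ within $[6,k-5]$, so there are at most $(k-7)/4$ of them, and (\ref{eq-P2Pnm0}) then forces the number $e$ of type $0$ copies among $\{P_2^1,P_2^2,P_2^{k-1},P_2^k\}$ to satisfy $e \ge m_2 + 2$. But $P_2^1,P_2^2$ cannot both be type $0$, and whichever is type $0$ forces a type $2$ copy at position $2$ or $3$ (directly from domination at the end, or via Lemma~\ref{lem-P2PnG2}), with the symmetric statement at the right end; hence $e \le 2$. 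These two facts pin down $e = 2$ and $m_2 = 0$, yet the two occupied end regions supply two distinct type $2$ copies (distinct because $k \ge 17$), so $m_2 \ge 2$ --- the desired contradiction. I expect the delicate points to be getting the count of admissible internal positions exactly right and confirming the two end-forced type $2$ copies cannot coincide, rather than the congruence bookkeeping, which is routine.
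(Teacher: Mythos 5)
Your proposal is correct and follows essentially the same route as the paper: delete an internal type-$0$ copy (which must exist since $m_0\ge 5$), apply Corollary~\ref{cor-trim} with the inductive hypothesis, reduce to the two surviving congruence cases, and then kill $k\equiv 3\pmod 4$ by counting admissible internal positions, forcing $2+m_2$ type-$0$ copies among $P_2^1,P_2^2,P_2^{k-1},P_2^k$, and deriving a contradiction from the type-$2$ copies forced at each end (via domination and Lemma~\ref{lem-P2PnG2}). Your final step is organised slightly more cleanly than the paper's ($e\le 2$ and $e\ge m_2+2$ give $m_2=0$, yet both ends force a type-$2$ copy), but the ingredients and structure are the same.
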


\begin{proof}Recall from (\ref{eq-P2Pnm0}) that there is at least one internal copy of type 0. Consider such a copy, say $P_2^m$ for some $m \in \{3, \hdots, k-2\}$. If we delete this copy, we obtain the union of $P_2 \Box P_{m-1}$ and $P_2 \Box P_{k-m}$. Then, from Corollary \ref{cor-trim} we have $|S| \geq \left\lceil \frac{3m-2}{4} \right\rceil + \left\lceil \frac{3k-3m+1}{4} \right\rceil$. By checking all combinations of $k$ and $m$ mod 4, it can be seen that this is a contradiction except for the following two cases: either $k = 0\mbox{ mod }4$ and $m \in \{2,3\}\mbox{ mod }4$, or $k = 3\mbox{ mod }4$ and $m = 2\mbox{ mod }4$.

Suppose $k = 3\mbox{ mod }4$. Recall from (\ref{eq-P2Pnm0}) that $m_0 = \left\lceil \frac{k}{4} \right\rceil + m_2$. However, internal copies $P_2^m$ can only be of type 0 if $m = 2\mbox{ mod }4$. Hence, at most $\left\lceil \frac{k}{4} \right\rceil - 2$ internal copies are of type 0. Hence, $2 + m_2$ of $P_2^1, P_2^2, P_2^{k-1}, P_2^k$ must be of type 0. Suppose that $P_2^1$ is of type 0. Then, by the domination condition, $P_2^2$ must be of type 2, which implies that both $P_2^{k-1}$ and $P_2^k$ must be type 0, which violates the domination condition. Hence, $P_2^1$ is not of type 0. An analagous argument can be made to conclude that $P_2^k$ is not of type 0. Hence, both $P_2^2$ and $P_2^{k-1}$ must be of type 0, and $m_2 = 0$. However, according to Lemma \ref{fig-P2PnG2} this is impossible, and hence $k \neq 3\mbox{ mod }4$, completing the proof.\end{proof}

In the next proof, we utilise the concept of a ``block", defined in Section \ref{sec-notation}. We similarly define an ``end-block" to be a sequence of consecutive copies of $P_2$ such that the first is $P_2^1$ and the last is of type 0, or such that the first is of type 0 and the last is $P_2^k$. Also, the end-copy in an end-block (either $P_2^1$ or $P_2^k$) should not be type 0; if it is, then this is just a block, rather than an end-block. As such, an end-block contains only one copy of type 0, whereas a block contains two copies of type 0. As with a block, if the end-block contains $l$ copies of $P_2$, we say it has length $l$.

\begin{lem}Suppose that $S$ is a secure dominating set for $P_2 \Box P_k$ such that $|S| = \left\lceil \frac{3k-3}{4} \right\rceil$. Furthermore, suppose that for all $n = 2, \hdots, k-1$, we have $\gamma_s(P_2 \Box P_n) = \left\lceil \frac{3n+1}{4} \right\rceil$. Then, $m_2 = 0$.\label{lem-P2Pnm20}\end{lem}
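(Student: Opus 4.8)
The goal is to show $m_2 = 0$, i.e. no copy of $P_2^k$ is of type 2. The plan is to assume for contradiction that $m_2 \geq 1$ and derive structural constraints on where type 2 copies can sit, then combine these with the counting identity \eqref{eq-P2Pnm0} and the already-established patterns to force a contradiction.

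First I would recall the key facts established so far: by Lemma \ref{lem-P2Pn0mod4} we know $k = 0 \bmod 4$, an internal type 0 copy $P_2^m$ forces $m \bmod 4 \in \{2,3\}$, and by \eqref{eq-P2Pnm0} we have $m_0 = \lceil k/4 \rceil + m_2$. The strategy is to argue that a type 2 copy must be tightly surrounded: I would examine the local structure around a type 2 copy $P_2^m$ and use the secure domination condition (as in the proofs of Lemmas \ref{lem-P2PnG2} and \ref{lem-P2Pn101101}) to determine which patterns can legally neighbour it. The plausible mechanism is that each type 2 copy forces an extra type 0 copy nearby (consistent with the $+m_2$ term), but the rigid residue restriction $m \bmod 4 \in \{2,3\}$ on type 0 positions means these forced type 0 copies cannot all be packed in without creating a forbidden pattern such as 101101 (Lemma \ref{lem-P2Pn101101}) or violating Lemma \ref{lem-P2PnG2} at the second-end copies.

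Concretely, I would first handle the interaction of a type 2 copy with the type 0 copies on either side. The counting identity says that the presence of each type 2 copy is exactly compensated by one additional type 0 copy beyond the baseline $\lceil k/4 \rceil$. I expect the argument to show that a type 2 copy cannot be adjacent to a type 0 copy on both sides (which would be too efficient and contradict the density forced by the $0 \bmod 4$ spacing), and then to trace the knock-on effects: a type 2 copy whose neighbours are not both type 0 must sit inside a run of type 1 copies, but the residue constraint on type 0 positions then forces the surrounding type 1 copies into a 101101 configuration once we account for all the type 0 copies demanded by \eqref{eq-P2Pnm0}. The main obstacle, I anticipate, is the bookkeeping: one must simultaneously respect the modular positions of the type 0 copies, the exact count $m_0 = \lceil k/4 \rceil + m_2$, and the end/second-end boundary behaviour from Lemma \ref{lem-P2PnG2}, and show these are jointly unsatisfiable whenever $m_2 \geq 1$.

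The cleanest route may be a global averaging/packing argument rather than a purely local one: since $k = 0 \bmod 4$ and type 0 copies live only at positions $\equiv 2,3 \pmod 4$, the type 0 copies come in at most $k/4$ "slots" of consecutive allowed positions, but $m_0 = k/4 + m_2$ needs strictly more than $k/4$ of them when $m_2 \geq 1$, forcing two type 0 copies into adjacent allowed positions (i.e. a pattern $\dots 00 \dots$ at positions $\equiv 2,3$); I would then show such a double-0 block, together with a type 2 copy elsewhere, yields a deletable block via Corollary \ref{cor-trim} or Lemma \ref{lem-smoothing} that drops $|S|$ below $\lceil (3k-3)/4 \rceil$, the contradiction. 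The hard part will be verifying that the induced-subgraph deletion accounts the vertices tightly enough that the resulting bound $|S| \geq \gamma_s(P_2 \Box P_{k-l}) + |U \cap S|$ is genuinely violated in every residue case — this is where I expect to spend the most care, checking the $k$ and block-length residues modulo $4$ exactly as was done in Lemma \ref{lem-P2Pn0mod4}.
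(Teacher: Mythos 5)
Your toolbox is the right one (Corollary \ref{cor-trim}, the mod-$4$ position constraints from Lemma \ref{lem-P2Pn0mod4}, the forbidden patterns), but the plan has a genuine gap and its central heuristic points the wrong way. The proof hinges on one structural fact you never establish: counting the vertices of $S$ inside the block (or end-block) of length $l$ containing a type 2 copy and deleting it via Corollary \ref{cor-trim} gives $|S| \geq \lceil (3k+l-3)/4 \rceil$, which forces $l=3$; hence every type 2 copy sits in a block with pattern exactly $020$. Your stated expectation is the opposite --- that a type 2 copy \emph{cannot} be flanked by type 0 copies ``because that is too efficient'' and must therefore sit inside a run of type 1 copies --- whereas the deletion count shows the flanking type 0 copies are \emph{forced}, and the contradiction then comes from Lemma \ref{lem-P2Pn0mod4} (two internal type 0 copies at distance two cannot both lie at positions $\equiv 2,3 \pmod 4$) together with a boundary case analysis.

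Your fallback pigeonhole argument is correct as far as it goes ($m_0 = k/4 + m_2 > k/4$ allowed slots does force two type 0 copies into one slot), but it does not close. If the two type 0 copies are adjacent, deleting that length-2 block gives only $|S| \geq 3k/4 - 1$, which is consistent with $|S| = 3k/4$ and so is no contradiction; to get one you must absorb the neighbouring copies (which the domination condition forces to be type 2) into a longer block, and that is exactly the $l \geq 4$ deletion bound you have not proved. Worse, in the two end slots $\{1,2,3\}$ and $\{k-2,k-1,k\}$ the two type 0 copies need not be adjacent: the configuration $020$ on $P_2^1, P_2^2, P_2^3$ survives all of your counting and modular constraints (deleting that block also yields only $|S| \geq 3k/4$), and ruling it out requires a local guard-movement argument: an attack at $v^1_1$ forces $v^4_1 \in S$, similarly $v^4_2 \in S$, so $P_2^4$ is type 2, whence $P_2^5$ is type 0, contradicting Lemma \ref{lem-P2Pn0mod4}. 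This boundary case is the crux of the argument and is absent from your plan.
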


\begin{proof}Suppose that $m_2 > 0$, then there is a type 2 copy of $P_2$. Then since $m_0 > 2$, this copy of $P_2$ must either be contained within a block or an end-block. Suppose first that it is contained within an end-block, of length $l$. Then the end-block must contain at least $l$ vertices in $S$. We can delete this end-block, obtaining $P_2 \Box P_{k-l}$. Then, from Corollary \ref{cor-trim} we have $|S| \geq \left\lceil \frac{3k-3l+1}{4} \right\rceil + l$ which is a contradiction. Hence, the type 2 copy must be contained within a block.

Suppose that the block containing the type 2 copy has length $l$. By the definition of a block, $l \geq 3$. Then the block contains at least $l-1$ vertices in $S$. We can delete this block, obtaining a union of two graphs $P_2 \Box P_{a}$ and $P_2 \Box P_{b}$, where $a+b=k-l$. Then, from Corollary \ref{cor-trim} we have $|S| \geq \left\lceil \frac{3a+1}{4} \right\rceil + \left\lceil \frac{3b+1}{4} \right\rceil + l-1 \geq \left\lceil \frac{3a+3b+1}{4} \right\rceil + l-1 = \left\lceil \frac{3k+l-3}{4} \right\rceil$. This is a contradiction unless $l = 3$. Hence, the block must have the pattern 020.

Now, suppose that all copies contained within this block are internal copies. Then the pattern 020 contradicts Lemma \ref{lem-P2Pn0mod4}, as the two type 0 copies are distance two apart. Hence, the block must not contain all internal copies. Also, the block cannot start at $P_2^2$, or else $P_2^4$ would be of type 0 which contradicts Lemma \ref{lem-P2Pn0mod4}. Similarly, since $k = 0\mbox{ mod }4$ by Lemma \ref{lem-P2Pn0mod4}, the block cannot start at $P_2^{k-3}$. Hence, there are only two possible options. Either the block starts at $P_2^1$, or $P_2^{k-2}$. Suppose it starts at $P_2^1$, and consider $P_2^2, P_2^3, P_2^4, P_2^5$. If there is an attack at $v^1_1$, the guard at $v^2_1$ must move to assist, which implies that $v^4_1 \in S$ in order to avoid leaving $v^3_1$ unguarded. An analagous argument can be made for an attack at $v^1_2$, implying that $v^4_2 \in S$. Hence, $P_2^4$ must be of type 2 as well. Then, since all type 2 copies must exist in a pattern 020, this implies that $P_2^5$ is of type 0, which contradicts Lemma \ref{lem-P2Pn0mod4}. This situation is displayed in Figure \ref{fig-P2Pnm20}. By symmetry, because $k = 0\mbox{ mod }4$, the same argument can be made for a block starting at $P_2^{k-2}$. Hence the pattern 020 cannot exist in $S$, and so $m_2 = 0.$\end{proof}

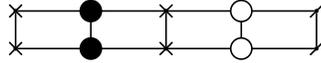
\begin{figure}[h!]\begin{center}\begin{tikzpicture}[smallgraph]\gridgraph{2}{5} \inS{2}{1} \inS{2}{2} \inSa{4}{1} \inSa{4}{2} \notS{1}{1} \notS{1}{2} \notS{3}{1} \notS{3}{2} \notSa{5}{1} \notSa{5}{2}\end{tikzpicture}
\caption{The situation described in Lemma \ref{lem-P2Pnm20}.\label{fig-P2Pnm20}}\end{center}\end{figure}

We are now ready to pose the main theorem of this section.

\begin{thm}Consider the graph $P_2 \Box P_n$. Then for $n \geq 2$,\label{thm-P2xPn}
$$\gamma_s(P_2 \Box P_n) = \left\lceil \frac{3n+1}{4} \right\rceil.$$\end{thm}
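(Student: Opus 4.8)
The upper bound $\gamma_s(P_2 \Box P_n) \le \left\lceil \frac{3n+1}{4} \right\rceil$ is already in hand from Section \ref{sec-upperbounds}, so the plan is to prove the matching lower bound by strong induction on $n$. The base cases $n \le 16$ are settled by the exact formulation of \cite{burdett}, as noted above. For the inductive step I would argue by contradiction in exactly the setting prepared before the lemmas: assume the formula holds for all $2 \le n \le k-1$ but fails at $k \ge 17$, so that there is a secure dominating set $S$ with $|S| = \left\lceil \frac{3k-3}{4} \right\rceil = \left\lfloor \frac{3k}{4} \right\rfloor$. Everything then reduces to showing that such an $S$ cannot exist.

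The first move is to feed $S$ through the structural lemmas already established under this hypothesis. Lemma \ref{lem-P2Pn0mod4} forces $k \equiv 0 \pmod 4$ and restricts every internal type-0 copy to a position $m$ with $m \bmod 4 \in \{2,3\}$, while Lemma \ref{lem-P2Pnm20} gives $m_2 = 0$. Substituting $m_2 = 0$ and $k \equiv 0 \pmod 4$ into (\ref{eq-P2Pnm0}) yields $m_0 = k/4$; since also $m_2 = m_3 = 0$, the $k$ copies split into exactly $k/4$ copies of type 0 and $3k/4$ copies of type 1. The goal of the remainder is to show that $k/4$ type-0 copies cannot be placed subject to the constraints, which is where the secure-domination-specific obstructions must be used.

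I would then pin down the allowed positions of the type-0 copies. No end or second-end copy can be type 0: if $P_2^1$ (or $P_2^k$) were type 0 then both vertices of the adjacent copy would be forced into $S$, making it type 2 and contradicting $m_2=0$; and $P_2^2$, $P_2^{k-1}$ are excluded by Lemma \ref{lem-P2PnG2} together with $m_2 = 0$. Hence all $k/4$ type-0 copies lie at positions in $\{3,\dots,k-2\}$. Writing these positions as $p_1 < \cdots < p_{k/4}$, I would bound the consecutive gaps $p_{i+1}-p_i$: a gap of $1$ is the pattern $00$, again forcing a type-2 neighbour; a gap of $2$ is the pattern $010$, impossible because two internal type-0 copies at distance two occupy positions whose residues differ by $2$, which cannot both lie in $\{2,3\} \bmod 4$; and a gap of $3$ is the pattern $0110$, whose flanking copies are necessarily type 1 (being neither type 0, by the no-$00$ condition, nor type 2, since $m_2=0$), thereby producing the pattern $101101$ forbidden by Lemma \ref{lem-P2Pn101101}. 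Thus every gap is at least $4$.

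The contradiction is then immediate from a span count: $p_{k/4} - p_1 = \sum(\text{gaps}) \ge 4\left(\tfrac{k}{4} - 1\right) = k - 4$, whereas $p_1 \ge 3$ and $p_{k/4} \le k-2$ give $p_{k/4} - p_1 \le k-5$, which is impossible. Hence no such $S$ exists, the lower bound holds at $k$, and the induction closes. The main obstacle, and the place where the secure-domination structure is genuinely needed, is precisely the gap analysis: the mod-$4$ positional constraints alone \emph{do} admit a tight packing of $k/4$ zeros (for instance via alternating gaps $3,5,3,5,\dots$, which fills the available span exactly), so ruling out the gap-$3$ configuration through the $101101$ pattern of Lemma \ref{lem-P2Pn101101} is exactly what breaks the packing and delivers the span contradiction.
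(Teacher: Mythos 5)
Your proposal is correct: it shares the paper's entire framework (induction with base cases $n\le 16$, the contradiction hypothesis $|S|=\left\lceil\frac{3k-3}{4}\right\rceil$, and Lemmas \ref{lem-P2PnG2}, \ref{lem-P2Pn101101}, \ref{lem-P2Pn0mod4} and \ref{lem-P2Pnm20}), but finishes differently. The paper enumerates the candidate type-0 positions explicitly as $P_2^3$, $P_2^{k-2}$ and the pairs $\{4j+2,4j+3\}$, shows each pair contributes exactly one type-0 copy, and then runs a left-to-right propagation: Lemma \ref{lem-P2Pn101101} forces the type-0 copy to be the second member of each successive pair, which collides with $P_2^{k-2}$ being type 0 at the right end. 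You instead prove a uniform lower bound of $4$ on the gaps between consecutive type-0 positions (gap $1$ via the forced type-2 neighbour, gap $2$ via the mod-$4$ residues, gap $3$ via the $101101$ obstruction) and close with a span count $p_{k/4}-p_1\ge k-4$ versus $p_{k/4}-p_1\le k-5$. The two endgames invoke the same key obstruction (Lemma \ref{lem-P2Pn101101} applied to the $0110$ configuration), but your packing argument is symmetric and arguably cleaner than the paper's directional propagation, which as written even contains a small transcription slip about which of $P_2^{k-6},P_2^{k-5}$ is type 0. The only imprecision in your write-up is the parenthetical claim that alternating gaps $3,5,3,5,\dots$ fill the span exactly (the last gap generally has to be adjusted), but this is a motivational aside and does not affect the validity of the proof.
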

\begin{proof}We use the exact formulation from \cite{burdett} to confirm the result for $n \leq 16$. Then, suppose there is some value $k \geq 17$ such that Theorem \ref{thm-P2xPn} holds for $n = 2, \hdots, k-1$, but $\gamma_s(P_2 \Box P_k) \leq \left\lceil \frac{3k-3}{4} \right\rceil$. Then, there is a secure dominating set $S$ such that $|S| = \left\lceil \frac{3k-3}{4} \right\rceil$. By Lemma \ref{lem-P2Pn0mod4}, it must be the case that $k = 0\mbox{ mod }4$, and all internal copies $P_2^m$ of type 0 must satisfy $m \in \{2,3\}\mbox{ mod }4$. Furthermore, from Lemma \ref{lem-P2Pnm20}, $m_2 = 0$, and so from Lemma \ref{fig-P2PnG2}, $P_2^2$ and $P_2^{k-1}$ cannot be of type 0. It is also easy to check that neither $P_2^1$ nor $P_2^k$ can be of type 0, or else $P_2^2$ or $P_2^{k-1}$ need to be type 2 to satisfy the domination condition.

Given the above, we now consider all possible copies which might be type 0. They include $P_2^3$, $P_2^{k-2}$, and all internal pairs of the form $P_2^m, P_2^{m+1}$ for $m = 2\mbox{ mod }4$. There are $\frac{k}{4} - 2$ such internal pairs. Now, suppose that for one of these internal pairs, both are of type 0. Then it is clear from the domination condition that their neighbouring copies must be of type 2, which contradicts $m_2 = 0$. Hence, each internal pair contains at most one type 0 copy. However, since $k = 0\mbox{ mod }4$ and $m_2 = 0$, (\ref{eq-P2Pnm0}) reduces to $m_0 = \frac{k}{4}$. Hence we must have exactly one type 0 copy in all internal pairs, and $P_2^3$ and $P_2^{k-2}$ must both be type 0 as well.

Now, consider $P_2^2, \hdots, P_2^7$. From the previous paragraph, we know that exactly one of $P_2^6$ and $P_2^7$ must be type 0. If $P_2^6$ is type 0, then $P_2^7$ must be type 1, and this results in the pattern 101101, contradicting Lemma \ref{lem-P2Pn101101}. Hence, $P_2^6$ must be type 1 and $P_2^7$ type 0. The same argument can be made for $P_2^6, \hdots, P_2^{11}$, concluding that $P_2^{10}$ must be type 1 and $P_2^{11}$ type 0. Continuing this argument, we arrive at $P_2^{k-6}$ being type 0 and $P_2^{k-5}$ being type 1, but $P_2^{k-2}$ is also type 1. That means $P_2^{k-6}, \hdots, P_2^{k-1}$ has the pattern 101101, contradicting Lemma \ref{lem-P2Pn101101}. Hence, in all cases, a contradiction is reached, and so it cannot be the case that $|S| \leq \left\lceil \frac{3k-3}{4} \right\rceil$. Hence, the lower bound for $|S|$ coincides with the upper bound from Section \ref{sec-upperbounds}, completing the proof.\end{proof}

\section{The secure domination number of $P_2 \Box C_n$}

We begin by noting that $P_2 \Box C_n$ can be thought of as $P_2 \Box P_n$ with the addition of two edges. Hence $\gamma_s(P_2 \Box C_n) \leq \gamma_s(P_2 \Box P_n)$. The question then is, is the inequality ever strict, and if so, under which conditions? In this section, we will prove that equality holds in all cases except when $n = 0 \mbox{ mod}8$. Specifically, we will prove that
$$\gamma_s(P_2 \Box C_n) = \left\{\begin{array}{lcc}\frac{3n}{4}& & \mbox{ if } n = 0 \mbox{ mod}8,\\ \vspace{-0.2cm}\\
\left\lceil \frac{3n+1}{4} \right\rceil & & \mbox{otherwise.}\\
\end{array}\right.$$

Suppose that there is some value $k$ such that $\gamma_s(P_2 \Box C_k) < \gamma_s(P_2 \Box P_k)$. That is, there exists some secure dominating set $S$ for $P_2 \Box C_k$, such that $|S| = \left\lceil \frac{3k-3}{4} \right\rceil$. Note that this implies that $S$ is not a secure dominating set for $P_2 \Box P_k$. Recall that $P_2 \Box C_k$ contains $k$ copies of $P_2$, and each copy contains 0, 1 or 2 vertices in $S$. Note that $P_2 \Box C_k$ can be drawn as two concentric cycles connected by inner edges; we will arbitrarily designate one of these cycles to be the ``outer" cycle and the other to be the ``inner" cycle. To distinguish between these, we will refrain from using the term ``type 1" here, and instead use ``type 1i" and ``type 1o" to indicate a copy of $P_2$ has only the inner vertex in $S$, or only the outer vertex in $S$, respectively. Hence, we will say that each copy of $P_2$ will be one of four types: type 0, type 1i, type 1o or type 2.

\begin{lem}Suppose $S$ is a secure dominating set for $P_2 \Box C_k$ such that $|S| = \left\lceil \frac{3k-3}{4} \right\rceil.$ Then, no two neighbouring copies of $P_2$ may be of the same type.\end{lem}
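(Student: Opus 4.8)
The plan is to argue by contradiction, exploiting the fact that $S$ is too small to secure dominate the corresponding path graph. Since we have already shown in Theorem~\ref{thm-P2xPn} that $\gamma_s(P_2 \Box P_k) = \lceil \frac{3k+1}{4}\rceil$, and here $|S| = \lceil\frac{3k-3}{4}\rceil < \lceil\frac{3k+1}{4}\rceil$, the set $S$ cannot be secure dominating for $P_2 \Box P_k$. Recall also that deleting the outer edge $v^j_1 v^{j+1}_1$ together with the inner edge $v^j_2 v^{j+1}_2$ joining any two consecutive copies turns $P_2 \Box C_k$ back into $P_2 \Box P_k$. So it suffices to show that, whenever two consecutive copies $P_2^j$ and $P_2^{j+1}$ share a type, both of these connecting edges may be deleted without destroying the secure domination of $S$; this would make $S$ secure dominating for $P_2 \Box P_k$, a contradiction.

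The key observation is that if $P_2^j$ and $P_2^{j+1}$ have the same type, then each connecting edge has either both endpoints in $S$ or both endpoints outside $S$. Indeed, the outer vertices $v^j_1, v^{j+1}_1$ both lie in $S$ precisely when the common type is type 1o or type 2, and both lie outside $S$ precisely when it is type 0 or type 1i; the analogous statement holds for the inner vertices. I would then delete each connecting edge by the appropriate earlier result: an edge with both endpoints in $S$ may be removed by item 2 of Lemma~\ref{thm-mods}, while an edge $vw$ with $v, w \notin S$ may be removed by item 2 of Lemma~\ref{thm-mods2}, since two adjacent vertices on either the inner or the outer $k$-cycle have no common neighbour (for $k \geq 4$, the small values of $k$ being handled separately via the exact formulation), so that $|N(v) \cap N(w) \cap S| = 0 \neq 1$. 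Applying the relevant lemma to each of the two edges in turn yields $P_2 \Box P_k$ with $S$ still secure dominating.

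The main work is simply the bookkeeping of the previous paragraph: checking, over all four possible shared types, that each of the two connecting edges falls into one of the two deletable categories, and confirming the no-common-neighbour condition needed for Lemma~\ref{thm-mods2}. I do not expect a genuine obstacle here, as the case split is short and uniform; the only subtlety worth recording is that the construction fails exactly when the two copies have \emph{different} types, since then a connecting edge may have one endpoint in $S$ and one outside, making neither lemma applicable. This is consistent with the statement forbidding only equal neighbouring types. Once both edges are deleted, the resulting graph is literally $P_2 \Box P_k$, and the contradiction with $|S| < \gamma_s(P_2 \Box P_k)$ completes the argument.
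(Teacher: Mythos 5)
Your proposal is correct and follows essentially the same route as the paper: the paper's proof likewise deletes the two connecting edges between the equal-type copies via Lemma~\ref{thm-mods} (item 2) and Lemma~\ref{thm-mods2}, and derives a contradiction from $S$ being secure dominating for $P_2 \Box P_k$ with $|S| < \gamma_s(P_2 \Box P_k)$. Your write-up merely spells out the case analysis and the empty-common-neighbourhood check that the paper leaves implicit.
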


\begin{proof}Suppose two copies are of the same type. Then, the two edges between them could be deleted, and by Lemmas \ref{thm-mods} and \ref{thm-mods2}, $S$ would still be secure dominating in the resulting graph. However, the resulting graph is $P_2 \Box P_k$, which leads to a contradiction.\end{proof}

\begin{lem}Suppose $S$ is a secure dominating set for $P_2 \Box C_k$ such that $|S| = \left\lceil \frac{3k-3}{4} \right\rceil.$ Further, suppose there is a block (or a set of consecutive blocks) of length $l$ which contains $c$ vertices in $S$. Then, $c < \frac{3l}{4}$, and if $c = \frac{3l-1}{4}$ then $k = 0\mbox{ mod }4$.\label{lem-P2Cncl}\end{lem}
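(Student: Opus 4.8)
The plan is to reduce $P_2 \Box C_k$ to a shorter cycle by excising the block region and then comparing cardinalities. A set of (consecutive) blocks of length $l$ has type-0 copies at both of its ends, so if $U$ denotes its vertex set, every edge leaving $U$ emanates from a type-0 copy and hence from a vertex not in $S$; this is exactly the hypothesis of Corollary \ref{cor-trim}. I would therefore apply Lemma \ref{lem-smoothing} to delete these $l$ copies and reconnect the remainder, obtaining
$$|S| \geq \gamma_s(P_2 \Box C_{k-l}) + c.$$
Rearranging and substituting $|S| = \left\lceil \frac{3k-3}{4}\right\rceil$ yields $c \leq \left\lceil \frac{3k-3}{4}\right\rceil - \gamma_s(P_2 \Box C_{k-l})$, which is the inequality I would analyse.

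Next I would use minimality of the counterexample $k$: since $k-l<k$, the value $\gamma_s(P_2\Box C_{k-l})$ is already pinned down. In the generic case $k-l \not\equiv 0 \bmod 8$ it equals $\left\lceil\frac{3(k-l)+1}{4}\right\rceil$, so
$$c \leq \left\lceil\frac{3k-3}{4}\right\rceil - \left\lceil\frac{3(k-l)+1}{4}\right\rceil \leq \left\lceil\frac{3l}{4}\right\rceil - 1,$$
where the final step is the elementary estimate $\lceil x\rceil - \lceil y\rceil \leq \lceil x-y\rceil$ applied with $x-y = \frac{3l-4}{4}$. Since $\left\lceil\frac{3l}{4}\right\rceil - 1 < \frac{3l}{4}$ always holds, this settles the first claim. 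For the conditional I would note that $\frac{3l-1}{4}$ is an integer only when $l\equiv 3\bmod 4$, and that precisely then $\left\lceil\frac{3l}{4}\right\rceil - 1 = \frac{3l-1}{4}$, so $c=\frac{3l-1}{4}$ forces equality in the bound above. Evaluating the two ceilings for each residue of $k$ modulo $4$ shows that this equality is attainable only when $k\equiv 0\bmod 4$, giving the stated implication.

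The main obstacle is the borderline case $k-l \equiv 0\bmod 8$, where the reduced cycle attains the \emph{smaller} optimum $\gamma_s(P_2\Box C_{k-l}) = \frac{3(k-l)}{4}$ instead of $\left\lceil\frac{3(k-l)+1}{4}\right\rceil$. Here the unit of slack that drives the argument above vanishes, and the pure counting bound would permit $c=\frac{3l-1}{4}$ together with $k\equiv 3\bmod 4$, which is exactly the configuration the lemma must exclude. To close this gap I would exploit the fact that equality in Lemma \ref{lem-smoothing} forces $S$ restricted to $C_{k-l}$ to be a \emph{minimum} secure dominating set, and that for $k-l\equiv 0\bmod 8$ such a set is structurally rigid (it is essentially the repeating pattern of Figure \ref{fig-ubP2Cn}); I would then examine how the deleted block reattaches to this rigid pattern and argue that the required type-0 copies at the block boundary cannot coexist with it. I expect this structural step, rather than the ceiling arithmetic, to be where the genuine work lies.
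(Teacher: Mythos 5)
Your reduction goes to the wrong target graph, and this creates a gap that your own proposal acknowledges but does not close. The paper's proof deletes the block and invokes Corollary \ref{cor-trim} to land on $P_2 \Box P_{k-l}$ --- a \emph{path} product --- rather than re-closing the cycle to get $P_2 \Box C_{k-l}$. Removing a contiguous run of copies from a cycle naturally leaves a path, so no smoothing is needed, and $\gamma_s(P_2 \Box P_{k-l}) = \left\lceil \frac{3(k-l)+1}{4} \right\rceil$ is already established uniformly by Theorem \ref{thm-P2xPn} with no exceptional residues. The arithmetic then closes in one line: $|S| \geq \left\lceil \frac{3k-3l+1}{4} \right\rceil + c = \left\lceil \frac{3k+4c-3l+1}{4} \right\rceil$, which contradicts $|S| = \left\lceil \frac{3k-3}{4}\right\rceil$ whenever $4c - 3l + 1 \geq 1$, and the $c = \frac{3l-1}{4}$ case forces $\left\lceil \frac{3k}{4}\right\rceil \leq \left\lceil \frac{3k-3}{4}\right\rceil$, i.e.\ $k \equiv 0 \bmod 4$. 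Your generic-case ceiling manipulation reproduces exactly this, so that part is fine.

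The genuine gap is the case $k - l \equiv 0 \bmod 8$ that your route manufactures. There the shorter cycle attains $\frac{3(k-l)}{4}$, your bound degrades to $c \leq \left\lceil \frac{3l-3}{4}\right\rceil$, and for $l \equiv 0 \bmod 4$ this no longer gives $c < \frac{3l}{4}$, while for $l \equiv 3 \bmod 4$ it permits $c = \frac{3l-1}{4}$ with $k \equiv 3 \bmod 4$ --- precisely the conclusion the lemma must rule out. Your proposed repair (rigidity of minimum secure dominating sets of $P_2 \Box C_m$ for $m \equiv 0 \bmod 8$, plus an analysis of how the block reattaches) is not carried out, is nowhere established in the paper, and would itself require essentially the full structural machinery of Lemmas \ref{lem-P2Cnblocks}--\ref{lem-P2Cneven} applied to the smaller cycle. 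There is also a secondary issue: you invoke ``minimality of the counterexample $k$'' to pin down $\gamma_s(P_2 \Box C_{k-l})$, but the paper's setup for this section assumes only that \emph{some} $k$ admits $|S| = \left\lceil\frac{3k-3}{4}\right\rceil$, and the cycle formula for smaller $n$ is exactly what the section is in the process of proving. Reducing to the path case avoids both problems at once.
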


\begin{proof}We can delete the block (or set of consecutive blocks) obtaining $P_2 \Box P_{k-l}$. Then, from Corollary \ref{cor-trim} we have $|S| \geq \left\lceil \frac{3k-3l+1}{4} \right\rceil + c = \left\lceil \frac{3k+4c-3l+1}{4} \right\rceil$. It is clear that this contradicts the assumption if $4c-3l+1 \geq 1$, and hence $c < \frac{3l}{4}$. Also, if $c = \frac{3l-1}{4}$ we have $\left\lceil \frac{3k}{4} \right\rceil \leq \left\lceil \frac{3k-3}{4} \right\rceil$ which implies that $k = 0\mbox{ mod }4$.\end{proof}

\begin{lem}Suppose $S$ is a secure dominating set for $P_2 \Box C_k$ such that $|S| = \left\lceil \frac{3k-3}{4} \right\rceil.$ If any copy of $P_2$ is of type 2, then its neighbours must both be type 0, and it must be the case that $k = 0\mbox{ mod }4$.\label{lem-P2Cn020}\end{lem}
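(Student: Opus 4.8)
The plan is to show that any type 2 copy must sit inside a block of the form $020$, and then read off the modular condition from the second part of Lemma \ref{lem-P2Cncl}. Suppose some copy $P_2^j$ is of type 2. First I would argue that $P_2^j$ is flanked on both sides by the type 0 endpoints of a block. Since a type 2 copy exists we have $m_2 \geq 1$, and equation (\ref{eq-balance}) (with $m_3 = 0$) gives $m_0 = k - |S| + m_2 \geq \frac{k}{4} + m_2 \geq 2$. Hence there are at least two type 0 copies, and the nearest type 0 copies to the left and right of $P_2^j$ are distinct. Taking these together with the intervening non-type-0 copies produces a block of some length $l$ whose interior contains $P_2^j$.

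Next I would bound the number $c$ of vertices of $S$ lying in this block. Its two endpoints are type 0, while each of the $l-2$ interior copies is non-type-0 and therefore contributes at least one vertex, with the type 2 copy $P_2^j$ contributing two. Thus $c \geq (l-3)\cdot 1 + 2 = l-1$. On the other hand, Lemma \ref{lem-P2Cncl} gives $c < \frac{3l}{4}$, so $l - 1 < \frac{3l}{4}$, which forces $l < 4$. Since the block contains two type 0 endpoints plus the interior type 2 copy, $l \geq 3$, and hence $l = 3$. Therefore the block is exactly $020$, so both neighbours of $P_2^j$ are of type 0, proving the first claim. For such a block $c = 2 = \frac{3l-1}{4}$, so the second part of Lemma \ref{lem-P2Cncl} applies and yields $k = 0 \mbox{ mod } 4$, proving the second claim.

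The only real obstacle is the first step, namely confirming that a genuine block with two distinct type 0 endpoints surrounds $P_2^j$; everything after that is the short counting argument above. This is where I would be careful, since if $m_0$ could equal $1$ the enclosing region would wrap around a single type 0 copy and the length bookkeeping underlying Lemma \ref{lem-P2Cncl} would not apply cleanly. The bound $m_0 \geq 2$ obtained from (\ref{eq-balance}) under the hypothesis $m_2 \geq 1$ rules this out, and in fact holds for every admissible $k$, so no separate treatment of small cases is needed.
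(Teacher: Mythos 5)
Your proposal is correct and follows essentially the same route as the paper: locate the type 2 copy inside a block, count at least $l-1$ vertices of $S$ in that block, apply Lemma \ref{lem-P2Cncl} to force $l=3$ (hence the pattern $020$), and then invoke the $c=\frac{3l-1}{4}$ case of that lemma to conclude $k=0\bmod 4$. Your additional check that $m_0\geq 2$ (so the type 2 copy genuinely lies in a block with two distinct type 0 endpoints) is a detail the paper leaves implicit, and it is a worthwhile one.
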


\begin{proof}Consider such a type 2 copy of $P_2$. It must exist inside a block with length $l \geq 3$. Then it must contain at least $l-1$ vertices in $S$. Applying Lemma \ref{lem-P2Cncl}, we obtain a contradiction unless $l = 3$. Hence, any block containing the type 2 copy must have the pattern 020. Then, in this case $l = 3$ and $c = 2$, and so $c = \frac{3l-1}{4}$. Hence, from Lemma \ref{lem-P2Cncl}, $k = 0\mbox{ mod }4.$\end{proof}

\begin{lem}Suppose $S$ is a secure dominating set for $P_2 \Box C_k$ such that $|S| = \left\lceil \frac{3k-3}{4} \right\rceil.$ Then $S$ cannot contain the pattern 202, or the pattern 010.\label{lem-P2Cnpatterns}\end{lem}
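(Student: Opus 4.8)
The plan is to handle the two forbidden patterns by quite different means: the pattern $202$ falls to a short counting argument assembled from the lemmas already in hand, whereas the pattern $010$ requires a direct analysis of the attack/defence condition and is where the real difficulty lies.

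For $202$, I would first apply Lemma~\ref{lem-P2Cn020}: each of the two type 2 copies forces both of its neighbours to be type 0. The central copy of $202$ is already type 0, so the pattern is forced to extend to the five consecutive copies $02020$. These five copies constitute a set of two consecutive blocks of total length $l=5$ containing $c=4$ vertices of $S$ (two from each type 2 copy). Since $4 \ge \tfrac{3\cdot 5}{4}=\tfrac{15}{4}$, this contradicts the strict bound $c<\tfrac{3l}{4}$ of Lemma~\ref{lem-P2Cncl}, so $202$ cannot occur.

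For $010$, label the three copies $G^1,G^2,G^3$ with types $1,0,1$, and write $v^i_1,v^i_2$ for the outer and inner vertices of $G^i$. Because $G^2$ is type 0, both $v^2_1$ and $v^2_2$ must be dominated, and the only candidate dominators lie in $G^1$ and $G^3$; this rules out the two type 1 copies having the same orientation (otherwise one of $v^2_1,v^2_2$ is left uncovered). Hence, up to the inner/outer symmetry, I may take $G^1$ to be type 1o ($v^1_1\in S$) and $G^3$ to be type 1i ($v^3_2\in S$). The crux is then the two attacks on $G^2$: an attack at $v^2_1$ has $v^1_1$ as its unique possible defender, and for $S\setminus\{v^1_1\}\cup\{v^2_1\}$ to remain dominating one is forced to have $v^0_2\in S$ (to keep $v^1_2$ covered), where $G^0$ is the copy preceding $G^1$; symmetrically, an attack at $v^2_2$ forces $v^4_1\in S$ in the copy $G^4$ following $G^3$. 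Applying Lemma~\ref{lem-P2Cn020} once more excludes $G^0$ and $G^4$ from being type 2, so in fact $G^0$ is type 1i and $G^4$ is type 1o, pinning down the five consecutive types as 1i, 1o, 0, 1i, 1o.

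The main obstacle is completing the $010$ case from here, since this forced ``staircase'' of orientations does not on its own break the counting bound. I would continue to propagate the attack/defence analysis outward: for instance, attacking $v^1_2$ shows its only viable defender is $v^0_2$, which in turn constrains the types of $G^{-1}$ and $G^{-2}$, and I would track how these forced orientations extend along the cycle, using the lemma that no two neighbouring copies share a type to prune cases. I expect the contradiction to emerge once the propagation is pushed far enough that it either deposits a type 2 copy inside a short block or is obliged to close up consistently around the cycle, at which point Lemma~\ref{lem-P2Cncl} should finally be violated. Keeping the inner/outer orientations bookkept correctly through this propagation, and isolating the exact block that breaks the $c<\tfrac{3l}{4}$ bound, is the delicate part where I anticipate most of the case analysis to reside.
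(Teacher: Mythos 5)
Your treatment of the pattern $202$ matches the paper exactly: Lemma~\ref{lem-P2Cn020} forces $02020$, and Lemma~\ref{lem-P2Cncl} with $l=5$, $c=4$ gives the contradiction. That half is fine.

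The $010$ half, however, rests on a misreading of the paper's pattern convention and consequently attacks a false statement. In this paper the pattern $010$ means the \emph{first} copy is type 0, the \emph{second} is type 1, and the \emph{third} is type 0 (see the definition of ``pattern'' in Section~\ref{sec-notation} and the worked example there); you have instead taken the middle copy to be type 0 and the outer two to be type 1, i.e.\ you are analysing the pattern $101$. That pattern genuinely occurs in minimum secure dominating sets of size $\frac{3k}{4}=\left\lceil\frac{3k-3}{4}\right\rceil$ when $k=0\bmod 8$: the optimal construction of Figure~\ref{fig-ubP2Cn} repeats $01102011$, and across the seam between consecutive repetitions one finds exactly the configuration 1o, 1i, 0, 1o, 1i that (up to the inner/outer reflection) your propagation arrives at. So the contradiction you ``expect to emerge once the propagation is pushed far enough'' does not exist, and the open-ended case analysis you defer to cannot close. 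The paper's actual argument for $010$ is short: with $G^1,G^3$ of type 0 flanking the type 1 copy $G^2$ (say $v^2_1\in S$), domination forces $v^4_2\in S$, the attack at $v^2_2$ forces $v^4_1\in S$, so $G^4$ is type 2 and by Lemma~\ref{lem-P2Cn020} $G^5$ is type 0; two further attacks at $v^3_2$ and $v^3_1$ then force $G^6$ to be type 2, producing the pattern $202$ already excluded in the first half. You would need to restart the second half along these lines rather than patch the propagation.
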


\begin{proof}Suppose that $S$ contains the pattern 202. Then, from Lemma \ref{lem-P2Cn020}, both type 2 copies must have an additional type 0 neighbour, and so $S$ contains the pattern 02020. However, if we apply Lemma \ref{lem-P2Cncl} for $l = 5$ and $c = 4$, we obtain a contradiction. Hence, $S$ cannot contain the pattern 202.

Next, suppose that $S$ contains the pattern 010. We will denote these three copies by $G^1, G^2, G^3$, and also consider the next three neighbouring copies $G^4, G^5, G^6$. We will denote the outer and inner vertices of $G^i$ by $v^i_1$ and $v^i_2$, respectively. By symmetry, it doesn't matter whether $v^2_1 \in S$ or $v^2_2 \in S$; we will assume the former. Then, by the domination property, $v^4_2 \in S$. Also, if there is an attack at $v^2_2$, the guard at $v^2_1$ must move to assist, which implies that $v^4_1 \in S$ in order to avoid leaving $v^3_1$ unguarded. Hence, $G^4$ is of type 2, and by Lemma \ref{lem-P2Cn020} we know that $G^5$ is of type 0.

Then, if there is an attack at $v^3_2$, the guard at $v^4_2$ must move to assist, which implies that $v^6_2 \in S$ in order to avoid leaving $v^5_2$ unguarded. Likewise, if there is an attack at $v^3_1$, the guard at $v^2_1$ cannot move to assist without leaving $v^2_2$ unguarded. Hence, the guard at $v^4_1$ must move to assist, which implies that $v^6_1 \in S$ in order to avoid leaving $v^5_1$ unguarded. Therefore, $G^6$ is of type 2, and $S$ contains the pattern 202, which as established above is impossible. Hence, $S$ cannot contain the pattern 010. This situation is displayed in Figure \ref{fig-P2Cn010}\end{proof}

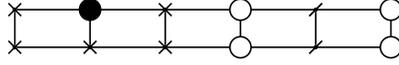
\begin{figure}[h!]\begin{center}\begin{tikzpicture}[smallgraph]\gridgraph{2}{6} \inS{2}{1} \inSa{4}{1} \inSa{4}{2} \inSa{6}{1} \inSa{6}{2}
\notS{1}{1} \notS{1}{2} \notS{2}{2} \notS{3}{1} \notS{3}{2} \notSa{5}{1} \notSa{5}{2}\end{tikzpicture}
\caption{The situation described in the second part of Lemma \ref{lem-P2Cnpatterns}.\label{fig-P2Cn010}}\end{center}\end{figure}

\begin{lem}Suppose $S$ is a secure dominating set for $P_2 \Box C_k$ such that $|S| = \left\lceil \frac{3k-3}{4} \right\rceil.$ Then $k = 0\mbox{ mod }4$.\label{lem-P2Cn0mod4}\end{lem}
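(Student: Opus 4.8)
The plan is to argue by contradiction, assuming $k \neq 0\mbox{ mod }4$, and to combine the block structure forced by the earlier lemmas with a short counting argument. First I would invoke Lemma \ref{lem-P2Cn020}: since any type 2 copy would force $k = 0\mbox{ mod }4$, there can be no type 2 copy, so $m_2 = 0$. With $m_2 = m_3 = 0$, the balance equation (\ref{eq-balance}) gives $m_0 = k - |S| = k - \left\lceil\frac{3k-3}{4}\right\rceil = \left\lfloor\frac{k}{4}\right\rfloor + 1$, and in particular $4m_0 = k - (k\mbox{ mod }4) + 4 > k$. The $m_0$ type 0 copies cut the cycle into exactly $m_0$ blocks; since each type 0 copy is the shared endpoint of two consecutive blocks while each type 1 copy lies in the interior of a unique block, the block lengths satisfy $\sum_i l_i = 2m_0 + m_1 = k + m_0$.

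Next I would bound each block length from below. Since $S$ contains no pattern 010 by Lemma \ref{lem-P2Cnpatterns}, no block consists of a single type 1 copy between two type 0 copies, so every block has length at least $4$. The crux is to rule out blocks of length exactly $4$; note that Lemma \ref{lem-P2Cncl} does not exclude these, since such a block has $c = 2 < 3$ vertices. So I would argue directly from secure domination. Consider a length 4 block $G^0,G^1,G^2,G^3$ with $G^0,G^3$ of type 0. Because no two neighbours may share a type and $m_2 = 0$, the two interior copies are type 1 of opposite flavour, say (by the inner/outer symmetry) $v^1_1 \in S$ and $v^2_2 \in S$, and the flanking copies $G^{-1},G^4$ are also type 1. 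Dominating the inner vertex $v^0_2$ forces $v^{-1}_2 \in S$ (so $v^{-1}_1 \notin S$), and dominating the outer vertex $v^3_1$ forces $v^4_1 \in S$ (so $v^4_2 \notin S$). Consequently $v^0_1$ is dominated only by $v^1_1$ and $v^3_2$ is dominated only by $v^2_2$. Now an attack at $v^1_2$ can only be met by the guard at $v^1_1$ or the guard at $v^2_2$, its two neighbours in $S$: moving $v^1_1$ leaves $v^0_1$ undominated, while moving $v^2_2$ leaves $v^3_2$ undominated. Hence no block of length $4$ can exist, and every block has length at least $5$.

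Finally I would close with the counting contradiction: summing the block lengths gives $k + m_0 = \sum_i l_i \geq 5 m_0$, hence $k \geq 4 m_0 > k$, which is absurd. Therefore $k = 0\mbox{ mod }4$.

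I expect the length 4 exclusion in the second step to be the main obstacle, both because it is the one place where the coarse block statistics from Lemma \ref{lem-P2Cncl} are insufficient and because it is exactly the configuration that distinguishes $P_2 \Box C_k$ from $P_2 \Box P_k$; indeed, with the forced flavours the six copies $G^{-1},\dots,G^4$ realise the pattern 101101, so this step is morally a cyclic analogue of Lemma \ref{lem-P2Pn101101}. The remaining steps are bookkeeping with the balance equation and the block/endpoint count.
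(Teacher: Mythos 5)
Your proposal is correct and follows essentially the same route as the paper: eliminate type 2 copies via Lemma \ref{lem-P2Cn020}, exclude blocks of length 3 via the 010 pattern of Lemma \ref{lem-P2Cnpatterns}, rule out length-4 blocks by the very same attack-at-the-interior-vertex argument on the pattern 0110, and finish with the counting contradiction (your $\sum_i l_i = k + m_0 \geq 5m_0$ is just a rephrasing of the paper's $m_1 \geq 3m_0$ combined with the balance equation). The only cosmetic difference is that you make the block-length bookkeeping explicit where the paper counts type 1 copies per block directly.
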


\begin{proof}Suppose $k \neq 0\mbox{ mod }4$. We know from Lemma \ref{lem-P2Cn020} that no copies of $P_2$ are of type 2. Hence, all blocks have the pattern 01...10 for some number of 1s. We know from Lemma \ref{lem-P2Cnpatterns} that $S$ cannot contain the pattern 010. We will now show that $S$ cannot contain the pattern 0110 either in this case.

Consider six consecutive copies of $P_2$, denoted by $G^1, \hdots, G^6$. Suppose that $G^2$ and $G^5$ are of type 0, one of $G^3$ and $G^4$ is of type 1i, and the other of type 1o. Due to symmetry, it doesn't matter which, so we will assume that $v^3_1 \in S$ and $v^4_2 \in S$. This situation is displayed in Figure \ref{fig-P2Cn0110}. Then, by the domination property $v^1_2 \in S$ and $v^6_1 \in S$. Also, since there are no copies of type 2, we have $v^1_1 \not\in S$ and $v^6_2 \not\in S$. Then, suppose there is an attack at $v^3_2$. Either the guard at $v^3_1$, or the guard at $v^4_2$ must move to assist. In the former case, $v^2_1$ is left unguarded. In the latter case, $v^5_2$ is left unguarded. Hence, this is impossible, and $S$ cannot contain the pattern 0110. Therefore, all blocks contain at least three blocks of type 1i or type 1o.

Noting that $m_2 = 0$, we have $m_1 = \left\lceil \frac{3k-3}{4} \right\rceil$. Also, from the above paragraph, we have $m_1 \geq 3m_0$. Combining these together, we get $\frac{3k}{4} \leq \left\lceil \frac{3k-3}{4} \right\rceil$ which violates the initial assumption that $k \neq 0\mbox{ mod }4.$\end{proof}

\begin{figure}[h!]\begin{center}\begin{tikzpicture}[smallgraph]\gridgraph{2}{6} \inSa{1}{2} \inS{3}{1} \inS{4}{2} \inSa{6}{1}
\notSa{1}{1} \notS{2}{1} \notS{2}{2} \notS{3}{2} \notS{4}{1} \notS{5}{1} \notS{5}{2} \notSa{6}{2}\end{tikzpicture}
\caption{The situation described in Lemma \ref{lem-P2Cn0mod4}.\label{fig-P2Cn0110}}\end{center}\end{figure}
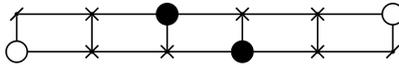

We have now established that if $\gamma_s(P_2 \Box C_k) < \gamma_s(P_2 \Box P_k)$, it must be the case that $k = 0\mbox{ mod }4$. Note that this, in turn, implies that $|S| = \frac{3k}{4}$. We will now go a step further and prove that, in fact, it must be the case that $k = 0\mbox{ mod }8$. Consider any block in $S$. From the results established so far, it is clear that the internal copies in the block are either alternating copies of type 1i and type 1o, or there is a single internal copy of type 2. We will refer to the former as a {\em type 1 block} of a certain length, and the latter as a {\em type 2 block}.

\begin{lem}Suppose $S$ is a secure dominating set for $P_2 \Box C_k$ such that $|S| = \frac{3k}{4}$. Suppose $S$ contains a type 1 block of length 4. Then there must be a type 2 block on at least one side of this block.\label{lem-P2Cnblocks}\end{lem}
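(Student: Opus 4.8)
The plan is to argue by contradiction: I would suppose the type 1 block of length 4 has a non-type-2 block on \emph{each} side and then exhibit a vertex that cannot be securely defended. Label the four copies of the block $G^1,G^2,G^3,G^4$, so that $G^1,G^4$ are type 0 and $G^2,G^3$ are the two internal type 1 copies. Since $G^2$ and $G^3$ are adjacent they cannot share a type, so one is type 1i and the other type 1o; by the reflective symmetry of $P_2$ I may assume $v^2_1\in S$ (so $G^2$ is type 1o) and $v^3_2\in S$ (so $G^3$ is type 1i). Write $G^0$ for the copy preceding $G^1$ and $G^5$ for the copy following $G^4$. Because no two neighbouring copies share a type, neither $G^0$ nor $G^5$ is type 0, so each is the adjacent internal copy of the block on its side.

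First I would pin down the types of $G^0$ and $G^5$ under the contradiction hypothesis. If the block on the $G^1$ side is not a type 2 block, then $G^0$ is type 1. The vertex $v^1_2\notin S$ has only the neighbours $v^1_1$, $v^2_2$, $v^0_2$; since $G^1$ is type 0 and $G^2$ is type 1o, the first two lie outside $S$, so domination forces $v^0_2\in S$ and hence $G^0$ is type 1i. Symmetrically, if the block on the $G^4$ side is not a type 2 block, then domination of $v^4_1$ (whose other neighbours $v^4_2$ and $v^3_1$ lie outside $S$) forces $v^5_1\in S$, so $G^5$ is type 1o.

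The crux is an attack at $v^2_2$, the inner vertex of $G^2$, which is not in $S$. Its only neighbours in $S$ are $v^2_1$ and $v^3_2$, so one of them must guard it. I would show both fail: moving $v^2_1$ onto $v^2_2$ leaves $v^1_1$ undominated, since its remaining neighbours $v^0_1$ and $v^1_2$ are outside $S$ (here $G^0$ being type 1i is used), while moving $v^3_2$ onto $v^2_2$ leaves $v^4_2$ undominated, since its remaining neighbours $v^5_2$ and $v^4_1$ are outside $S$ (here $G^5$ being type 1o is used). Hence $v^2_2$ cannot be securely defended, contradicting that $S$ is secure dominating, so at least one side must be a type 2 block.

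I expect no serious obstacle beyond choosing the right vertex to attack; the remaining work is a direct check once the surrounding copies are fixed as $G^0$ type 1i, $G^1,G^4$ type 0, and $G^5$ type 1o. It is worth noting where the hypotheses are genuinely used, which also confirms the statement is sharp: the length-4 assumption is exactly what makes $G^2$ adjacent to the endpoint $G^1$ and $G^3$ adjacent to the endpoint $G^4$, tying the two guard-failures to the two neighbouring blocks, and the contradiction evaporates as soon as either neighbour is a type 2 block, since a type 2 copy would place an $S$-vertex at $v^0_1$ (rescuing the $v^2_1$ move) or at $v^5_2$ (rescuing the $v^3_2$ move). A figure analogous to Figure \ref{fig-P2Cn0110}, with the forced vertices of $G^0$ and $G^5$ drawn as hollow circles, would make the argument transparent.
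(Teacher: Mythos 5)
Your proof is correct and takes essentially the same route as the paper: the paper notes that the hypothesis forces the pattern 101101 and reuses the proof of Lemma \ref{lem-P2Pn101101}, which attacks exactly the vertex you attack (the non-$S$ vertex of an internal type 1 copy of the block) and exhibits the same two failure modes, each leaving a vertex of one of the two flanking type 0 copies undominated. The only difference is bookkeeping --- you fix the 1i/1o types inside the block and deduce the neighbouring copies $G^0,G^5$ by domination, whereas the paper starts from the outer type 1 copy and works inward --- so no further comment is needed.
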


\begin{proof}Suppose the opposite is true. Then $S$ contains the pattern 101101. However, the proof of Lemma \ref{lem-P2Pn101101} can be repeated in this situation too, leading to the conclusion that this is impossible.\end{proof}

Now, denote by $b_i$ the number of type 1 blocks of length $i$. Then the following result emerges immediately from Lemma \ref{lem-P2Cnblocks}, where equality occurs if every type 2 block has type 1 blocks of length 4 on both sides.

\begin{cor}Suppose $S$ is a secure dominating set for $P_2 \Box C_k$ such that $|S| = \frac{3k}{4}$. Then $b_4 \leq 2m_2$.\label{cor-P2Cnb4}\end{cor}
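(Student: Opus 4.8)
The plan is to read the inequality as a simple charging argument between type 1 blocks of length 4 and type 2 blocks. First I would pin down what $m_2$ counts in terms of blocks. By Lemma \ref{lem-P2Cn020}, every type 2 copy of $P_2$ sits inside a pattern 020, which is exactly a type 2 block, and each such block contains precisely one type 2 copy. Hence type 2 copies and type 2 blocks are in bijection, and $m_2$ equals the number of type 2 blocks. This reduces the goal to showing that the number of type 1 blocks of length 4 is at most twice the number of type 2 blocks.

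Next I would set up the charging itself. Going around the cycle, the type 0 copies partition $P_2 \Box C_k$ into blocks that meet only at shared type 0 copies, so each block has exactly two neighbouring blocks, one across each of its two type 0 endpoints. By Lemma \ref{lem-P2Cnblocks}, every type 1 block of length 4 has a type 2 block adjacent to it on at least one side; I would assign to each such block one of its adjacent type 2 blocks, choosing arbitrarily when both sides qualify. Since a type 2 block has only two neighbours, it can receive at most two such assignments, namely one from each side.

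Counting the assignments from both ends then gives $b_4 \leq 2\cdot(\text{number of type 2 blocks}) = 2m_2$, which is the claim. The case $m_2 = 0$ is absorbed automatically: with no type 2 blocks available, Lemma \ref{lem-P2Cnblocks} forbids any type 1 block of length 4, so $b_4 = 0$.

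There is very little obstacle here, which is why the text flags the result as immediate; the only genuine content is the bookkeeping. The one point worth checking carefully is that each type 2 block really does have exactly two candidate neighbours in the cyclic block decomposition and that the factor of $2$ is therefore tight as an upper bound rather than an undercount — that is, that no single side of a type 2 block is ever charged by two distinct length-4 type 1 blocks, which holds because distinct blocks occupy disjoint interiors and share only boundary type 0 copies. Once that is verified, the double count closes immediately.
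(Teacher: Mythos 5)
Your proof is correct and matches the paper's intent: the paper states the corollary ``emerges immediately'' from Lemma \ref{lem-P2Cnblocks} via exactly the double-counting you spell out, with each type 2 block (which by Lemma \ref{lem-P2Cn020} contains exactly one type 2 copy, so there are $m_2$ of them) absorbing at most one length-4 type 1 block per side. Your write-up simply makes the charging explicit where the paper leaves it implicit.
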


\begin{lem}Suppose $S$ is a secure dominating set for $P_2 \Box C_k$ such that $|S| = \frac{3k}{4}$. Then all type 1 blocks must have length either 4 or 5, and $b_4 = 2m_2$.\label{lem-P2Cnblocks2}\end{lem}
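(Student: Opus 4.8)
The plan is to prove the claim by a global counting argument over the blocks, combining the balance equation (\ref{eq-balance}) with Corollary \ref{cor-P2Cnb4}. Since we are in the case $k = 0 \bmod 4$ with $|S| = \frac{3k}{4}$, equation (\ref{eq-balance}) gives $m_0 = \frac{k}{4} + m_2$ and, because $|S| = m_1 + 2m_2$, also $m_1 = \frac{3k}{4} - 2m_2$. First I would confirm that the blocks fall into exactly the two advertised classes. Any block containing a type 2 copy is forced by Lemma \ref{lem-P2Cn020} to be the pattern 020, a type 2 block. For the remaining blocks I must rule out two adjacent type 0 copies: if copies $i$ and $i+1$ were both type 0, the domination condition forces copies $i-1$ and $i+2$ to be type 2 (whence copies $i-2$ and $i+3$ are type 0 by Lemma \ref{lem-P2Cn020}), and considering an attack at each vertex of copy $i$ --- whose only guards lie in the type 2 copy $i-1$ --- forces copy $i-3$ to be type 2 as well, producing the pattern 202 on copies $i-3, i-2, i-1$, which Lemma \ref{lem-P2Cnpatterns} forbids. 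Hence every block has at least one internal copy and is either a type 1 block of length at least 4 (length 3 being the forbidden pattern 010) or a type 2 block of length 3.

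The counting is the crux. The $m_0$ type 0 copies divide the cycle into exactly $m_0$ blocks, each type 0 copy being shared by the two blocks it bounds. Of these, $m_2$ are type 2 blocks, so the type 1 blocks satisfy
$$\sum_{i \geq 4} b_i = m_0 - m_2 = \frac{k}{4}.$$
Counting internal copies, a type 1 block of length $i$ contributes $i-2$ type 1 copies, so
$$\sum_{i \geq 4} (i-2) b_i = m_1 = \frac{3k}{4} - 2m_2.$$
Subtracting twice the first identity from the second gives $\sum_{i \geq 4}(i-4) b_i = \frac{k}{4} - 2m_2$, and subtracting this in turn from the first identity yields
$$b_4 - \sum_{i \geq 6} (i-5) b_i = 2m_2.$$

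To finish, I would invoke Corollary \ref{cor-P2Cnb4}, namely $b_4 \leq 2m_2$. Together with the last identity this forces $\sum_{i \geq 6}(i-5) b_i \leq 0$; as each $b_i \geq 0$ and $i - 5 \geq 1$ for $i \geq 6$, every such term vanishes, so $b_i = 0$ for all $i \geq 6$ and consequently $b_4 = 2m_2$. Thus every type 1 block has length 4 or 5, and $b_4 = 2m_2$, as required.

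I expect the main obstacle to be bookkeeping rather than a single hard idea. One must be careful that in the cyclic graph the type 0 copies are shared between adjacent blocks, so that the block count is exactly $m_0$; and one must ensure the block classification is genuinely exhaustive, which is precisely why ruling out adjacent type 0 copies matters --- a length 2 block would contribute to the block count without contributing internal copies, breaking both counting identities.
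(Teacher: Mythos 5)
Your proof is correct and follows essentially the same route as the paper: two linear counting identities over the block decomposition are combined to give $b_4 = 2m_2 + \sum_{i\geq 6}(i-5)b_i$, after which Corollary \ref{cor-P2Cnb4} and nonnegativity finish the argument (the paper counts copies directly via $2m_2 + \sum_{i\geq4}(i-1)b_i = k$ rather than counting blocks via the balance equation, but these are equivalent). Your separate argument ruling out two adjacent type 0 copies is sound but redundant, since the earlier lemma already shows no two neighbouring copies can have the same type.
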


\begin{proof}We know from Lemma \ref{lem-P2Cnpatterns} that the type 1 blocks cannot have length 3. By considering the number of vertices each block contributes to $S$, we have $2m_2 + \sum_{i=4}^k (i-2)b_i = \frac{3k}{4}$. Also, by considering the number of copies of $P_2$ each block contains, we have $2m_2 + \sum_{i=4}^k (i-1)b_i = k$. Combining these, we can obtain $b_4 = 2m_2 + \sum_{i=6}^k (i-5)b_i$. Then, the result emerges from Corollary \ref{cor-P2Cnb4} and the nonnegativity of $b_i$.\end{proof}

Lemma \ref{lem-P2Cnblocks} and Lemma \ref{lem-P2Cnblocks2} imply that $S$ is exclusively made up of two kinds of patterns; specifically, the patterns 01102011 and 0111. Note that each pattern ends with either a type 1i or type 1o copy. We will say the pattern has ``parity i" in the former case, and ``parity o" in the latter case. This pattern is then followed by another pattern which begins with a type 0 copy.

\begin{lem}Suppose $S$ is a secure dominating set for $P_2 \Box C_k$ such that $|S| = \frac{3k}{4}$. Then $b_5$ must be an even number.\label{lem-P2Cneven}\end{lem}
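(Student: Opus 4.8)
The plan is to attach a \emph{parity} label (i or o) to every type-1 copy and to track how this label evolves as one travels once around the cycle. Since the two constituent patterns $0111$ and $01102011$ each begin with a type-0 copy and end with a type-1 copy, the whole configuration is a cyclic concatenation of these patterns, and $b_5$ is exactly the number of occurrences of the pattern $0111$. I would first record two easy local facts. Within any type-1 block the labels alternate by definition, so a length-5 block $01110$ reads $p,\bar p,p$ on its three internal type-1 copies and hence its first and last type-1 copies carry the \emph{same} parity. Secondly, whenever a type-0 copy has type-1 copies on both sides (which is exactly the situation at every junction between two consecutive patterns, since each pattern ends in $1$ and starts with $01$), the domination condition forces those two flanking type-1 copies to have \emph{opposite} parity: if both were type 1o then the inner vertex of the type-0 copy would be undominated. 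Thus every pattern boundary flips the parity.

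With these two facts in hand, a type-A pattern $0111$ preserves the parity internally (first equals last) but is entered across a boundary flip, and so produces a \emph{net} parity flip. To finish I must understand the pattern $01102011$, and the genuine obstacle is the behaviour of the parity across the central type-2 block $020$, where the alternation argument breaks down because the two flanking type-0 copies are dominated by the type-2 copy itself rather than by their type-1 neighbours.

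Here I would prove the following local claim using the security condition. Writing the relevant copies as $1,0,2,0,1$, and letting the two type-1 copies (each adjacent to a type-0 neighbour of the type-2 copy) play the roles I label copy $3$ and copy $7$, I claim copies $3$ and $7$ must have opposite parity. Suppose instead that both are type 1o. Consider an attack at the inner vertex of the type-0 copy on one side of the type-2 copy: its only neighbour in $S$ is the inner vertex of the type-2 copy, so that guard must respond; but once it leaves, the inner vertex of the type-0 copy on the \emph{other} side of the type-2 copy is no longer dominated. Hence the attack cannot be defended, a contradiction, and the case of two type 1i copies is symmetric. Combining this with the within-block alternation shows that in a type-B pattern the first and last type-1 copies carry opposite parity, so such a pattern flips the parity internally.

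Assembling everything, a type-B pattern flips internally and is entered across a boundary flip, giving a \emph{net preservation} of parity, whereas a type-A pattern gives a net flip. Consequently, traversing the cycle once, each type-A pattern toggles the parity and each type-B pattern leaves it unchanged, so the total number of toggles equals the number of type-A patterns, namely $b_5$. Since the journey returns to its starting copy, the parity must return to its initial value, which forces the number of toggles to be even. Therefore $b_5$ is even. The only non-routine step is the type-2 crossing claim, and I expect that to be the crux of the write-up.
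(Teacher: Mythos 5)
Your proof is correct and follows essentially the same route as the paper's: both track the 1i/1o parity of the type 1 copies around the cycle, show that each 0111 pattern produces a net parity flip while each 01102011 pattern does not, and rely on the same key security step at the 020 crossing (an attack at the inner vertex of a type 0 copy beside the type 2 copy can only be met by the type 2 copy's inner guard, whose departure exposes the inner vertex of the type 0 copy on the other side). Your packaging --- isolating the boundary flip, the within-block alternation, and a single type 2 crossing claim --- is a cleaner modularization of the paper's longer forcing chain through an eleven-copy window, but the substance is the same.
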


\begin{proof}Suppose $S$ contains the pattern 11020110 with parity i. Note that the pattern must be preceded by another pattern which ends with 10, and followed by another pattern which begins with a 1. We now consider eleven consecutive copies of $P_2$, call them $G^1, \hdots, G^{11}$ and denote the outer and inner vertices of $G^i$ by $v^i_1$ and $v^i_2$ respectively. They have types obeying the pattern 10110201101, and $G^3$ has type 1i. This situation is displayed in Figure \ref{fig-P2Cn10110201101}. By the domination condition, $G^1$ is type 1o. If there is an attack at $v^2_2$, the guard at $v^3_2$ must move to assist, which implies that $v^4_1 \in S$ in order to avoid leaving $v^3_1$ unguarded. Then, if there is an attack at $v^5_2$, the guard at $v^6_2$ must move to assist, which implies that $v^8_2 \in S$ in order to avoid leaving $v^7_2$ unguarded. Then, if there is an attack at $v^7_2$, the guard at $v^6_2$ cannot move to assist without leaving $v^5_2$ unguarded. Therefore, the guard at $v^8_2$ must move to assist, which implies that $v^9_1 \in S$ in order to avoid leaving $v^8_1$ unguarded. Finally, by the domination condition, $G^{11}$ is type 1i. As a result, we see that if the pattern 11020110 has parity i, the next pattern also has parity i. The analagous result can be obtained if the pattern 11020110 has parity o.

Now, suppose $S$ contains the pattern 1110 with parity i. Then clearly the three type 1 copies in the pattern are of type 1i, 1o, and 1i respectively. Then by the domination condition, the next pattern must start with a type 1o copy. Hence, if the pattern 1110 has parity i, the next pattern has parity o. The analogous result can be obtained if the pattern 1110 has parity o.

Since the parity is not changed by 11020110, and is changed by 1110, there must be an even number of 1110 patterns.\end{proof}

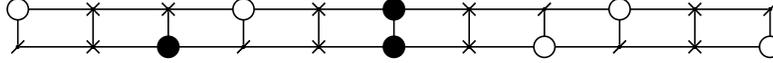
\begin{figure}[h!]\begin{center}\begin{tikzpicture}[smallgraph]\gridgraph{2}{11} \inSa{1}{1} \inS{3}{2} \inSa{4}{1} \inS{6}{1} \inS{6}{2} \inSa{8}{2} \inSa{9}{1} \inSa{11}{2}
\notSa{1}{2} \notS{2}{1} \notS{2}{2} \notS{3}{1} \notSa{4}{2} \notS{5}{1} \notS{5}{2} \notS{7}{1} \notS{7}{2} \notSa{8}{1} \notSa{9}{2} \notS{10}{1} \notS{10}{2} \notSa{11}{1}\end{tikzpicture}
\caption{The situation described in the second part of Lemma \ref{lem-P2Cneven}.\label{fig-P2Cn10110201101}}\end{center}\end{figure}

We are now ready to prove the main theorem of this section.

\begin{thm}Consider $P_2 \Box C_n$ for $n \geq 3$. Then,\label{thm-P2Cn}
$$\gamma_s(P_2 \Box C_n) = \left\{\begin{array}{ccc}\frac{3n}{4} & & n = 0\mbox{ mod }8,\\
\left\lceil \frac{3n+1}{4} \right\rceil & & \mbox{otherwise.}\end{array}\right.$$\end{thm}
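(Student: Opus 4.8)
The plan is to combine the upper bound from Section~\ref{sec-upperbounds} with a lower bound argument that splits into two regimes according to $n \bmod 8$. First I would dispatch small cases: using the exact formulation from \cite{burdett}, confirm the stated value for all $n$ up to some threshold (large enough that the inductive hypothesis $\gamma_s(P_2 \Box P_m) = \lceil (3m+1)/4 \rceil$ is available for the relevant smaller $m$, and large enough that $m_0$ is forced to be reasonably large). For the inductive step, suppose $S$ is a secure dominating set for $P_2 \Box C_n$ with $|S| < \lceil (3n+1)/4 \rceil$, i.e.\ $|S| = \lceil (3n-3)/4 \rceil$. The preceding lemmas already do almost all the work: by Lemma~\ref{lem-P2Cn0mod4}, such an $S$ can exist only when $n = 0 \bmod 4$, which forces $|S| = \frac{3n}{4}$; then by Lemmas~\ref{lem-P2Cn020}, \ref{lem-P2Cnblocks2} and \ref{lem-P2Cneven}, $S$ decomposes entirely into blocks of the two admissible pattern types, namely 01102011 (length-$8$, from a type~2 block flanked by two length-$4$ type~1 blocks) and 0111 (a length-$5$ type~1 block), with $b_4 = 2m_2$ and $b_5$ even.

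The core of the argument is a counting/parity reconciliation. I would count the total number of copies contributed by each admissible pattern: the 01102011 pattern spans $8$ copies and the 0111 pattern spans $4$ copies, so $n = 8m_2 + 4\,b_5$ (using $b_4 = 2m_2$ to account for every length-$4$ type~1 block inside an 01102011 pattern, and $b_5$ counting the length-$5$ blocks, which are exactly the 0111 patterns). Since Lemma~\ref{lem-P2Cneven} forces $b_5$ to be even, write $b_5 = 2t$; then $n = 8m_2 + 8t = 8(m_2 + t)$, so $n = 0 \bmod 8$. This is precisely the regime where the upper bound $\frac{3n}{4}$ was constructed in Figure~\ref{fig-ubP2Cn}, so no contradiction arises there and the value $\frac{3n}{4}$ is attained. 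Conversely, whenever $n \neq 0 \bmod 8$, the decomposition $n = 8(m_2+t)$ is impossible, so no secure dominating set of size $\lceil (3n-3)/4 \rceil$ exists, and hence $\gamma_s(P_2 \Box C_n) = \gamma_s(P_2 \Box P_n) = \lceil (3n+1)/4 \rceil$ by the opening remark of the section together with Theorem~\ref{thm-P2xPn}.

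I would organise the write-up as two cases. In the case $n \neq 0 \bmod 8$: the chain of lemmas shows any hypothetical $S$ with $|S| = \lceil (3n-3)/4 \rceil$ would force $n = 0 \bmod 8$, a contradiction; combined with $\gamma_s(P_2 \Box C_n) \le \gamma_s(P_2 \Box P_n)$ and Theorem~\ref{thm-P2xPn}, this pins the value at $\lceil (3n+1)/4 \rceil$. In the case $n = 0 \bmod 8$: the upper bound $\frac{3n}{4}$ comes from the explicit construction of Section~\ref{sec-upperbounds}, and the matching lower bound $\gamma_s(P_2 \Box C_n) \ge \frac{3n}{4}$ follows because any set of size $\lceil (3n-3)/4 \rceil = \frac{3n}{4} - 1 < \frac{3n}{4}$ would have to realise the block decomposition above, which it can (consistently), so the honest lower bound argument must instead rule out $|S| < \frac{3n}{4}$ directly --- here I would note that $\frac{3n}{4}$ already equals the constructed upper bound, and that $\gamma_s$ cannot drop below the value forced by Equation~\eqref{eq-P2Pnm0}-style balance once the admissible patterns are the only option.

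The step I expect to be the main obstacle is making the counting in the $n = 0 \bmod 8$ regime fully rigorous: I must verify that the two admissible patterns 01102011 and 0111 genuinely tile the cycle with no leftover copies and no additional boundary interactions around the wrap-around of $C_n$, and that the parity constraint from Lemma~\ref{lem-P2Cneven} (an even number of 0111/1110 patterns) is exactly what is needed to close the length equation into a multiple of $8$. Particular care is needed to confirm that $m_2$ and $t$ can be chosen to realise every $n = 0 \bmod 8$ (so the upper and lower bounds truly coincide) and that the cyclic boundary does not introduce a further pattern beyond the two identified. Once the tiling identity $n = 8(m_2 + t)$ is established, the modular conclusion and the split into the two cases are immediate.
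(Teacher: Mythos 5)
Your handling of the case $n \neq 0 \bmod 8$ is essentially the paper's argument: Lemma~\ref{lem-P2Cn0mod4} forces $n = 0 \bmod 4$ and $|S| = \frac{3n}{4}$, the block lemmas force the decomposition into the patterns 01102011 and 0111, and the count $n = 2m_2 + 3b_4 + 4b_5 = 8m_2 + 4b_5$ together with the parity of $b_5$ from Lemma~\ref{lem-P2Cneven} gives $n = 0 \bmod 8$. (You actually make explicit a counting step the paper leaves implicit, which is fine; note also the small arithmetic slip that for $4 \mid n$ one has $\left\lceil \frac{3n-3}{4} \right\rceil = \frac{3n}{4}$, not $\frac{3n}{4}-1$.)

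The genuine gap is your lower bound in the regime $n = 0 \bmod 8$. You correctly observe that the block-decomposition machinery cannot rule out a set of size $\frac{3n}{4}$ there (indeed such sets exist), and that what must be excluded is $|S| \leq \frac{3n}{4} - 1$; but your proposed closing move --- appealing to the upper bound and to ``Equation~(\ref{eq-P2Pnm0})-style balance once the admissible patterns are the only option'' --- is circular, because the admissible-pattern structure was derived only under the hypothesis $|S| = \frac{3n}{4}$ and says nothing about smaller sets. The missing step is short and does not need any of the pattern analysis: if $|S| \leq \frac{3n}{4} - 1$, then equation~(\ref{eq-balance}) gives $m_0 \geq n - |S| \geq \frac{n}{4} + 1 > 0$, so some copy is of type 0; deleting it (Lemma~\ref{lem-smoothing} with a single type 0 copy) yields $P_2 \Box P_{n-1}$ with $S$ still secure dominating, whence $|S| \geq \gamma_s(P_2 \Box P_{n-1}) = \left\lceil \frac{3n-2}{4} \right\rceil = \frac{3n}{4}$ by Theorem~\ref{thm-P2xPn}, a contradiction. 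This is exactly how the paper closes the case; without it your proof of the equality $\gamma_s(P_2 \Box C_n) = \frac{3n}{4}$ for $n = 0 \bmod 8$ is incomplete.
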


\begin{proof}From Lemma \ref{lem-P2Cneven}, we know that if $n \neq 0\mbox{ mod }8$ then $\gamma_s(P_2 \Box C_n) = \gamma_s(P_2 \Box P_n)$. Also, from Section \ref{sec-upperbounds} we know that if $n = 0\mbox{ mod }8$, we have $\gamma_s(P_2 \Box C_n) \leq \frac{3n}{4}$. Then all that remains is to show that $\gamma_s(P_2 \Box C_n) \geq \frac{3n}{4}$ for $n = 0\mbox{ mod }8$.

Suppose there is a secure dominating set $S$ for $P_2 \Box C_n$, $n = 0\mbox{ mod }8$, such that $|S| \leq \frac{3n}{4} - 1$.Then, certainly there is at least one type 0 block. It can be deleted to obtain $P_2 \Box P_{n-1}$. Then, from Corollary \ref{cor-trim} we have $|S| \geq \left\lceil \frac{3n-2}{4} \right\rceil$. Since $n = 0\mbox{ mod }4$, we know that $\left\lceil \frac{3n-2}{4} \right\rceil = \frac{3n}{4}$, which contradicts the initial assumption.\end{proof}

We conclude this section by remarking that the secure domination number for $P_2 \Box C_n$ has been conjectured twice previously, in different contexts, both of which agree with Theorem \ref{thm-P3Cn}. Valveny and Rodriguez-Vel\'{a}zquez \cite{valveny} considered $P_2 \Box C_n$ directly, while Winter \cite{winter} gave a conjecture for the secure domination number of the generalized Petersen graph $P(n,1)$ which is isomorphic to $P_2 \Box C_n$. Interestingly, the latter gave the conjectured formula as $\left\lceil \frac{n+7}{8} \right \rceil + \left\lceil \frac{n+4}{8} \right \rceil + \left\lceil \frac{n+2}{4} \right \rceil + \left\lceil \frac{n+1}{4} \right\rceil$. By checking all choices of $n\mbox{ mod }8$, it can be seen that this coincides with Theorem \ref{thm-P2Cn}.

\subsection{The secure domination number of $M_{2n}$}

Much like $P_2 \Box C_n$, the M\"{o}bius ladder graph $M_{2n}$ can be thought of as $P_2 \Box P_n$ with the addition of two edges. In this case, the two edges form a ``twist", however the location of this twist is not fixed. Indeed, we can choose to think of the twist as occurring between any two neighbouring copies of $P_2$. As such, whenever we look at proofs which are contained within a subgraph of $M_{2n}$, we can choose to think of the twist as occurring elsewhere. It can be quickly checked that all of the proofs for $P_2 \Box C_n$ prior to Lemma \ref{lem-P2Cneven} are also applicable to $M_{2n}$. In particular, it is hence established that if there is a secure dominating set $S$ for $M_{2k}$ such that $|S| < \gamma_S(P_2 \Box P_k)$, then $S$ must be made up exclusively of two kinds of patterns; specifically, the patterns 01102011 and 0111. Then, since we can think of the twist occuring between any neighbouring copies of $P_2$, we imagine it occuring between two patterns. We can then follow the same argument as in Lemma \ref{lem-P2Cneven}, noting the twist, to obtain the following corollary.

\begin{cor}Suppose $S$ is a secure dominating set for $M_{2k}$ such that $|S| = \frac{3k}{4}$. Then $b_5$ must be an odd number.\label{cor-M2nodd}\end{cor}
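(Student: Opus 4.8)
The plan is to mirror the proof of Lemma \ref{lem-P2Cneven} exactly, but to account for the single twist in $M_{2k}$, which acts as an extra parity flip. First I would recall what was established just before the corollary: since all the $P_2 \Box C_n$ results prior to Lemma \ref{lem-P2Cneven} carry over to $M_{2k}$, the set $S$ must be composed exclusively of the two pattern types, namely 01102011 and 0111, each carrying a parity (i or o) determined by its final type 1 copy. The two local parity computations from Lemma \ref{lem-P2Cneven} remain valid verbatim, since they concern subgraphs in which the twist can be assumed to lie elsewhere: the pattern 11020110 preserves parity, while the pattern 1110 flips it. The quantity $b_5$ counts exactly the 1110-type patterns (type 1 blocks of length 5), so each contributes one parity flip.

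Next I would trace the parity around the entire cycle of patterns. Because $M_{2k}$ is a single closed cycle, starting from any copy and following the patterns around must return to the starting copy with a consistent assignment. The crucial difference from $P_2 \Box C_n$ is the twist: when we pass through the twist edges, the inner and outer vertices are interchanged, so a parity i label on one side of the twist corresponds to parity o on the other. Thinking of the twist as occurring between two consecutive patterns, traversing it induces exactly one additional parity flip beyond those coming from the 1110 patterns.

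The key step is then the global consistency count. For $P_2 \Box C_k$ the total number of flips around the cycle had to be even (returning to the same parity), forcing $b_5$ even. For $M_{2k}$, the traversal accumulates $b_5$ flips from the 1110 patterns plus one extra flip from the twist, and for the labelling to close up consistently this total must be even. Hence $b_5 + 1$ is even, so $b_5$ is odd, which is the claim.

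The main obstacle I anticipate is making the twist-flip argument fully rigorous rather than merely plausible: specifically, pinning down that crossing the twist genuinely exchanges the roles of inner and outer vertices for the parity bookkeeping, and confirming that the local parity-transition lemmas (the 11020110 and 1110 calculations) are unaffected by where one imagines the twist. Since the preceding discussion already licenses relocating the twist away from any local subgraph under examination, the local computations transfer without change; the only genuinely new ingredient is the single global flip contributed by the twist, and the corollary follows immediately from the same even-cycle argument used for Lemma \ref{lem-P2Cneven} with the parity target shifted by one.
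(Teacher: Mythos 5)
Your proposal is correct and follows essentially the same route as the paper: the authors likewise observe that all results prior to Lemma \ref{lem-P2Cneven} transfer to $M_{2k}$ (so $S$ decomposes into the patterns 01102011 and 0111), place the twist between two patterns, and rerun the parity-tracking argument of Lemma \ref{lem-P2Cneven} with the twist contributing one extra parity flip, forcing $b_5$ to be odd. Your explicit bookkeeping ($b_5 + 1$ flips must be even) is exactly the intended argument, stated slightly more formally than the paper's brief remark.
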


Finally, we can use analagous arguments as in Theorem \ref{thm-P2Cn}, substituting Corollary \ref{cor-M2nodd} for Lemma \ref{lem-P2Cneven} and using the appropriate upper bound from Section \ref{sec-upperbounds} to obtain the following result. The only case not covered by the analagous arguments is $n = 4$, however we have confirmed that $\gamma_s(M_8) = 3$ using the exact formulation from \cite{burdett}.

\begin{thm}Consider $M_{2n}$ for $n \geq 3$. Then,\label{thm-M2n}
$$\gamma_s(M_{2n}) = \left\{\begin{array}{ccc}\frac{3n}{4} & & n = 4\mbox{ mod }8,\\
\left\lceil \frac{3n+1}{4} \right\rceil & & \mbox{otherwise.}\end{array}\right.$$\end{thm}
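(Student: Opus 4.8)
The plan is to mirror the proof of Theorem \ref{thm-P2Cn} as closely as possible, exploiting the fact (already noted in the text) that every structural lemma for $P_2 \Box C_n$ up to and including the characterization of $S$ as a union of the patterns 01102011 and 0111 applies verbatim to $M_{2n}$. Thus I would begin by fixing $k \geq 5$ and supposing, toward a contradiction, that there is a secure dominating set $S$ for $M_{2k}$ with $|S| < \gamma_s(P_2 \Box P_k) = \left\lceil \frac{3k+1}{4} \right\rceil$, so that $|S| = \left\lceil \frac{3k-3}{4} \right\rceil$. The same chain of lemmas forces $k = 0\mbox{ mod }4$, hence $|S| = \frac{3k}{4}$, and forces $S$ to consist entirely of the patterns 01102011 (contributing a type 1 block of length 5) and 0111 (contributing a type 1 block of length 4). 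This reduces the problem to a purely combinatorial parity count, exactly as in Lemma \ref{lem-P2Cneven}.

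Next I would invoke Corollary \ref{cor-M2nodd} in place of Lemma \ref{lem-P2Cneven}. The key point is that the twist in $M_{2k}$ can be positioned between any two neighbouring copies, in particular between two consecutive patterns, and traversing the twist reverses the identification of inner and outer vertices. Repeating the parity-tracking argument of Lemma \ref{lem-P2Cneven} around the full cycle, the pattern 01102011 preserves parity while 0111 flips it, but now the single twist contributes one additional parity flip. For the parities to be globally consistent after one full traversal, the total number of flips must be even; since the twist supplies one flip, the number of 0111 patterns — equivalently $b_5$, the count of length-5 type 1 blocks — must be \emph{odd}. This is precisely Corollary \ref{cor-M2nodd}, and it is the analogue that replaces the ``even'' conclusion of the cycle case.

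From here the argument closes exactly as in Theorem \ref{thm-P2Cn}. Combining $b_4 = 2m_2$ from Lemma \ref{lem-P2Cnblocks2} with the requirement that $b_5$ be odd, one checks that a valid configuration meeting $|S| = \frac{3k}{4}$ exists precisely when $k = 4\mbox{ mod }8$, and is impossible when $k = 0\mbox{ mod }8$; the relevant upper bound $\gamma_s(M_{2n}) \leq \frac{3n}{4}$ for $n = 4\mbox{ mod }8$ comes from Section \ref{sec-upperbounds}. For all $n \not\equiv 4\mbox{ mod }8$ one concludes $\gamma_s(M_{2n}) = \gamma_s(P_2 \Box P_n) = \left\lceil \frac{3n+1}{4} \right\rceil$, while for $n = 4\mbox{ mod }8$ the upper and lower bounds meet at $\frac{3n}{4}$. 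The lower bound $\gamma_s(M_{2n}) \geq \frac{3n}{4}$ for the latter case is obtained by the same block-deletion argument as in Theorem \ref{thm-P2Cn}: a type 0 block is deleted to leave $P_2 \Box P_{n-1}$, and Corollary \ref{cor-trim} together with $\left\lceil \frac{3n-2}{4}\right\rceil = \frac{3n}{4}$ for $n = 0\mbox{ mod }4$ gives the contradiction.

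The main obstacle I anticipate is justifying the parity-reversal at the twist rigorously, since the inner/outer labelling that underlies the ``parity i'' versus ``parity o'' distinction is precisely what the M\"obius twist interchanges. One must verify that the parity-tracking computation in Lemma \ref{lem-P2Cneven} — which repeatedly uses the domination and guarding conditions to force specific inner or outer vertices into $S$ — remains valid when the six- or eleven-copy window straddles the twist, and that the net effect is exactly one extra flip rather than some other shift. The secondary bookkeeping hurdle is confirming that the small case $n = 4$ (i.e. $M_8$), which falls outside the range where the block-deletion and pattern arguments apply, is handled separately; as the text notes, this is verified directly via the exact formulation of \cite{burdett}, giving $\gamma_s(M_8) = 3$.
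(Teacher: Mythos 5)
Your proposal is correct and follows essentially the same route as the paper: carry over all the $P_2 \Box C_n$ structural lemmas to $M_{2n}$ by relocating the twist, replace Lemma \ref{lem-P2Cneven} with the "$b_5$ odd" conclusion of Corollary \ref{cor-M2nodd} via the extra parity flip at the twist, match against the upper bound from Section \ref{sec-upperbounds}, close the $n = 4 \bmod 8$ lower bound by deleting a type 0 copy to reach $P_2 \Box P_{n-1}$, and check $M_8$ computationally. The obstacle you flag (verifying the parity bookkeeping when the window straddles the twist) is exactly the step the paper also leaves to the reader.
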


\section{The secure domination number of $P_3 \Box P_n$}\label{secp3pn}

We now consider $P_3 \Box P_n$. From Cockayne et al \cite{cockayne} we know that $\gamma_s(P_3 \Box P_n) \leq n+2$ for all $n \geq 2$. In this section, we will prove the following for $n\geq2$,

$$\gamma_s(P_3 \Box P_n) = \left\{\begin{array}{ccc}n+1 & & n \leq 8\mbox{ or } n = 10,\\
n + 2 & & n = 9\mbox{ or } n \geq 11.\end{array}\right.$$

We use the exact formulation from \cite{burdett} to confirm the result for $n \leq 21$. Then, suppose there is some value $k \geq 22$ such that $\gamma_s(P_3 \Box P_n) = n+2$ for $n = 11, \hdots, k-1$ but $\gamma_s(P_3 \Box P_k) \leq k+1$. Then, there is a secure dominating set $S$ for $P_3 \Box P_k$ such that $|S| = k+1$.

Recall that $P_3 \Box P_k$ contains $k$ copies of $P_3$, which we call $P_3^1, \hdots, P_3^k$. We will refer to $P_3^1$ and $P_3^k$ as ``end copies", $P_3^2$ and $P_3^{k-1}$ as ``second-end copies", and all other copies as ``internal copies".

\begin{lem}If $S$ is a secure dominating set for $P_3 \Box P_k$ such that $|S| = k+1$, then $m_0 > 0$.\label{lem-P3Pnm0}\end{lem}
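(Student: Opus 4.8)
The plan is to argue by contradiction, assuming $m_0 = 0$ and exploiting how tightly this constrains the distribution of $S$ among the copies. First I would specialise the balance equation (\ref{eq-balance}) to $n = k$ and $|S| = k+1$, which gives $m_0 = m_2 + 2m_3 - 1$. Setting $m_0 = 0$ forces $m_2 + 2m_3 = 1$, and since $m_2, m_3$ are nonnegative integers this means $m_2 = 1$ and $m_3 = 0$, whence $m_1 = k - 1$. In other words, every copy except a single type-2 copy contains exactly one vertex of $S$, so $S$ amounts to one guard per column with exactly one surplus guard available anywhere in the graph. The remainder of the argument is to show that one guard per column (plus one extra) cannot be secure dominating.

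Next I would classify each type-1 copy $P_3^i$ according to whether its unique vertex is the top $v^i_1$, the middle $v^i_2$, or the bottom $v^i_3$. The domination condition already imposes structure: a copy whose vertex is a corner (top or bottom) leaves the opposite corner undominated within its own column, so that corner must be covered by a same-row vertex of a neighbouring column, whereas a copy whose vertex is the middle dominates its whole column internally. The crucial input is the secure-domination condition. For a middle-type copy, an attack at $v^i_1$ can only be answered either by a horizontal top-neighbour already in $S$ or by sending $v^i_2$ upward; but the latter leaves $v^i_3$ undominated unless a horizontal bottom-neighbour lies in $S$, and the symmetric statement holds for an attack at $v^i_3$. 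Thus a middle-type copy all of whose attacks are defendable must have both a top-type and a bottom-type copy among its two neighbours. An analogous analysis of corner-type copies shows that defending the middle of a corner copy forces a neighbouring guard to relocate, which in turn demands a further guard one column further along; these demands chain.

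Finally I would convert these local support requirements into a global contradiction. Since $m_2 = 1$, the single type-2 copy splits the $k$ copies into at most two runs of consecutive type-1 copies, at least one of which has length at least $\lceil (k-1)/2 \rceil \ge 10$ and reaches an end of the grid. Within such a run every copy carries exactly one guard, so no copy can supply the extra neighbourly support that the fragility analysis requires; propagating the support demands along the run (or inward from the grid end) produces a vertex whose attack cannot be answered without a second guard in some column, contradicting $m_2 = 1$. Hence $m_0 = 0$ is impossible, giving $m_0 > 0$. I expect the main obstacle to be the secure-domination case analysis of the second paragraph: one must verify exhaustively that for every local arrangement of single-guard columns (top/middle/bottom) some attack is undefendable unless a neighbouring column carries a second vertex, and then argue that the lone surplus guard permitted by $m_2 = 1$ cannot patch a run of length $\ge 10$. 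Organising the bookkeeping so that the defects provably accumulate, rather than being absorbed harmlessly by the grid boundary or the type-2 copy, is the delicate part.
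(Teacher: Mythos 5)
Your opening step is exactly the paper's: specialising equation (\ref{eq-balance}) to $|S|=k+1$ gives $m_0 = m_2+2m_3-1$, so $m_0=0$ forces $m_3=0$, $m_2=1$, $m_1=k-1$. But the route you then take --- classifying the type-1 copies as top/middle/bottom and propagating ``support demands'' along a long run of type-1 copies until a defect accumulates --- has a genuine gap: in the interior of the grid there is no defect to accumulate. The diagonal pattern (top, middle, bottom, top, middle, bottom, \dots), which is precisely a middle-type copy flanked by one top-type and one bottom-type neighbour at every step, satisfies every local secure-domination requirement you identify; indeed it is the basis of the upper-bound construction for $P_3 \Box C_n$ in Figure \ref{fig-ubP3Cn}, and the reason $\gamma_s(P_3\Box P_n)$ needs $n+2$ rather than $n$ guards is entirely a boundary effect. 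So ``propagating along the run'' of length $\ge \lceil (k-1)/2\rceil$ never produces an undefendable attack on its own, and your worry about defects ``being absorbed harmlessly by the grid boundary'' has the picture inverted: the interior absorbs everything harmlessly, and the ends are where the contradiction lives.

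The paper's proof localises accordingly. It shows by direct case analysis that $P_3^1,P_3^2,P_3^3,P_3^4$ cannot all be type 1: if $P_3^1$ has a corner vertex in $S$, domination forces $v^2_3\in S$, an attack at $v^1_3$ forces $v^3_2\in S$, and then an attack at $v^1_2$ strands $v^2_1$; if $P_3^1$ has its middle vertex in $S$, a similar short chain of forced placements ends with an attack at $v^1_3$ that cannot be answered. Hence one of the first four copies must be type 2, and by symmetry so must one of the last four; since $k\ge 22$ these are disjoint, giving $m_2\ge 2$ and contradicting $m_2=1$. To repair your argument you would need to replace the interior propagation with this kind of finite case analysis anchored at the two ends of the grid; as written, the claim that the lone surplus guard ``cannot patch a run of length $\ge 10$'' is not substantiated and, for interior runs, is false.
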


\begin{proof}From equation (\ref{eq-balance}) we have $m_0 = -1 + m_2 + 2m_3$. Now, suppose that $m_0 = 0$. Then it is clear that $m_3 = 0$, $m_2 = 1$ and $m_1 = k-1$. Now suppose that none of $P_3^1, P_3^2, P_3^3, P_3^4$ are type 2. Clearly $P_3^1$ cannot be type 0, or else $P_3^2$ would need to be type 3 to satisfy the domination condition. Hence, $P_3^1$ is type 1. Up to symmetry, there are two cases to consider. Either one of the upper or lower vertices is in $S$, or the middle vertex is in $S$.

For the former case, we will assume $v^1_1 \in S$. Then, by the domination condition, $v^2_3 \in S$. Now, if there is an attack at $v^1_3$, the guard at $v^2_3$ must move to assist, which implies that $v^3_2 \in S$ in order to avoid leaving $v^2_2$ unguarded. Then, if there is an attack at $v^1_2$, the guard at $v^1_1$ must move to assist, leaving $v^2_1$ unguarded. Hence, this case is impossible.

For the second case, we have $v^1_2 \in S$. Then, suppose $v^2_1 \not\in S$. If there is an attack at $v^1_1$, the guard at $v^1_2$ must move to assist, which implies that $v^2_3 \in S$ in order to avoid leaving $v^1_3$ unguarded. It is clear that either the upper or lower vertex of $P^3_2$ must be in $S$, and due to symmetry either choice is equivalent. We will assume $v^2_3 \in S$. Then, by the domination condition, $v^3_1 \in S$. Then, if there is an attack at $v^2_1$, the guard at $v^3_1$ must move to assist, which implies that $v^4_2 \in S$ in order to avoid leaving $v^3_2$ unguarded. Finally, if there is an attack at $v^1_3$, either the guard at $v^1_2$, or the guard at $v^2_3$, must move to assist. In the former case, $v^1_1$ is left unguarded. In the latter case, $v^3_3$ is left unguarded. Hence, this case is also impossible, and so it cannot be the case that none of $P_3^1, P_3^2, P_3^3, P_3^4$ are type 2. These two cases are displayed in Figure \ref{fig-P3Pncases}.

An analagous argument can be made for $P^3_{k-3}, P^3_{k-2}, P^3_{k-1}, P^3_k$, and so $m_2 > 2$, which is a contradiction, completing the proof.\end{proof}

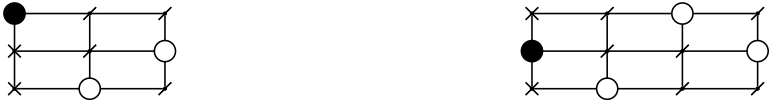
\begin{figure}[h!]\begin{center}
\begin{minipage}{0.4\textwidth}\begin{tikzpicture}[smallgraph]
\gridgraph{3}{3}
\inS{1}{1} \inSa{2}{3} \inSa{3}{2}
\notS{1}{2} \notS{1}{3} \notSa{2}{1} \notSa{2}{2} \notSa{3}{1} \notSa{3}{3}\end{tikzpicture}
\end{minipage}\hspace*{1cm}\begin{minipage}{0.4\textwidth}\begin{tikzpicture}[smallgraph]
\gridgraph{3}{4}
\inS{1}{2} \inSa{2}{3} \inSa{3}{1} \inSa{4}{2}
\notS{1}{1} \notS{1}{3} \notSa{2}{1} \notSa{2}{2} \notSa{3}{2} \notSa{3}{3} \notSa{4}{1} \notSa{4}{3}
\end{tikzpicture}\end{minipage}
\caption{The two cases considered in Lemma \ref{lem-P3Pnm0}.\label{fig-P3Pncases}}\end{center}\end{figure}

We are now ready to prove the main result of this section.

\begin{thm}Consider $P_3 \Box P_n$ for $n \geq 2$. Then,\label{thm-P3Pn}
$$\gamma_s(P_3 \Box P_n) = \left\{\begin{array}{ccc}n+1 & & n \leq 8\mbox{ or } n = 10,\\
n + 2 & & n = 9\mbox{ or } n \geq 11.\end{array}\right.$$\end{thm}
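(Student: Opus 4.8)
The plan is to prove Theorem \ref{thm-P3Pn} by strong induction, following the inductive scaffolding already set up in the preceding discussion. The small cases $n \leq 21$ are handled by the exact formulation from \cite{burdett}, so I assume the stated values hold for all $n = 2, \hdots, k-1$ with $k \geq 22$, and in particular that $\gamma_s(P_3 \Box P_n) = n+2$ for $n = 11, \hdots, k-1$. For the upper bound $\gamma_s(P_3 \Box P_n) \leq n+2$, I appeal directly to Cockayne et al \cite{cockayne}. The entire work is therefore to establish the matching lower bound for $n \geq 11$, i.e. to derive a contradiction from the assumption that there is a secure dominating set $S$ for $P_3 \Box P_k$ with $|S| = k+1$.

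First I would record the structural consequences of $|S| = k+1$. From the balance equation (\ref{eq-balance}) with $n = k$ and $|S| = k+1$ we get $m_0 = -1 + m_2 + 2m_3$, so there are relatively few type-0 copies and any type-0 copy must be ``paid for'' by a type-2 or type-3 copy. By Lemma \ref{lem-P3Pnm0} we know $m_0 > 0$, so there is at least one type-0 copy; the accounting then forces $m_2 + 2m_3 \geq 2$. The key reduction step is to locate an \emph{internal} type-0 copy $P_3^m$ and delete it, splitting the graph into $P_3 \Box P_{m-1}$ and $P_3 \Box P_{k-m}$. By Corollary \ref{cor-trim} this yields $|S| \geq \gamma_s(P_3 \Box P_{m-1}) + \gamma_s(P_3 \Box P_{k-m})$. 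Invoking the inductive hypothesis on each piece (with care for the small exceptional sizes $m-1, k-m \in \{9,10\}$, where the value dips to $n+1$ or stays at $n+2$) gives a lower bound on $|S|$ that one compares against $k+1$. The arithmetic of $\lfloor \cdot \rfloor$ and the exceptional values is what determines which splits are immediately contradictory and which survive.

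The main obstacle, and the part requiring the most care, is that the deletion argument alone will not kill every configuration: some placements of the type-0 copies near the ends (or in tight clusters with type-2/type-3 copies) give a split whose inductive lower bound exactly reaches $k+1$ rather than exceeding it. For these surviving cases I would argue directly about the local structure of $S$ near the ends, in the same spirit as the case analysis of Lemma \ref{lem-P3Pnm0}. Concretely, I expect to show that an end copy $P_3^1$ cannot be type 0 (else $P_3^2$ must be type 3, which overspends the budget), and then to analyze the possible types of $P_3^1, P_3^2, P_3^3$ under the domination-and-guarding conditions, using attacks on specific vertices to force the presence of extra vertices in $S$ and thereby either a direct contradiction or a forced extra type-2/type-3 copy that violates the $m_2 + 2m_3$ count from (\ref{eq-balance}). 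The symmetric argument applies at the copies $P_3^{k-2}, P_3^{k-1}, P_3^k$.

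In summary, the proof combines three ingredients: the balance equation (\ref{eq-balance}) to constrain the global counts $m_0, m_2, m_3$; the deletion/splitting argument via Corollary \ref{cor-trim} together with the inductive hypothesis to eliminate most positions of an internal type-0 copy; and a finite local case analysis at both ends (mirroring Lemma \ref{lem-P3Pnm0}) to dispose of the residual boundary-clustered configurations. Each branch terminates in either an immediate numerical contradiction with $|S| = k+1$ or a violation of the type counts, so no secure dominating set of size $k+1$ can exist, forcing $\gamma_s(P_3 \Box P_k) = k+2$ and completing the induction.
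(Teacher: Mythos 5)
Your proposal follows essentially the same route as the paper: exact computation for $n \leq 21$, the upper bound from Cockayne et al, the balance equation giving $m_0 = -1 + m_2 + 2m_3$, Lemma \ref{lem-P3Pnm0} to guarantee $m_0 > 0$, deletion of an internal type-0 copy via Corollary \ref{cor-trim} plus the inductive hypothesis (where, since $k-1 \geq 21$, at least one of the two pieces has size $9$ or at least $11$ and so contributes the ``$+2$'' value, forcing $|S| \geq k+2$), and finally a local analysis at the ends. This is exactly the paper's structure.

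However, the one concrete claim you make in the end-analysis is wrong: you assert that $P_3^1$ cannot be type 0 because $P_3^2$ would then be type 3, ``which overspends the budget.'' It does not. With $m_0 = 1$, $m_3 = 1$, $m_2 = 0$ the balance equation $m_0 = -1 + m_2 + 2m_3$ is satisfied exactly, so this configuration survives the counting argument, and it also survives the deletion argument (removing $P_3^1$ only yields $|S| \geq \gamma_s(P_3 \Box P_{k-1}) = k+1$, which is not a contradiction). This is precisely the residual case the paper must work hardest on: it first rules out $P_3^2$ (and $P_3^{k-1}$) being type 0 by showing the adjacent end copy would need two vertices in $S$ and then trimming both copies, then observes that both ends being type 0 would force two type-3 copies and violate the balance equation, and finally, in the surviving case where exactly one end is type 0, uses the counting to conclude that \emph{every} copy other than $P_3^1$ and $P_3^2$ is type 1 --- so that $P_3^{k-3},\dots,P_3^k$ are four consecutive type-1 copies, which is impossible by the attack analysis inside the proof of Lemma \ref{lem-P3Pnm0}. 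You have the right tools in hand (the balance equation and the Lemma \ref{lem-P3Pnm0} case analysis), but as written your plan would stall at this case; the fix is the global ``all remaining copies are type 1'' deduction followed by the four-consecutive-type-1 contradiction at the opposite end.
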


\begin{proof}We use the exact solver to verify Theorem \ref{thm-P3Pn} for $n \leq 21$. From Section \ref{sec-upperbounds} we know that $\gamma_s(P_3 \Box P_n) \leq n+2$. Then, it suffices to show that for any $P_3 \Box P_k$, $k \geq 22$, it is impossible to have a secure dominating set $S$ such that $|S| = k+1$. Suppose that such an $S$ exists. Then for some value of $k$, $P_3 \Box P_k$ constitutes the minimal such example. From Lemma \ref{lem-P3Pnm0} we know that $m_0 > 0$.

Suppose that there is an internal copy of type 0, say $P_3^m$ for $m = 3, \hdots, k-2$. Then we can delete this copy, resulting in the union of $P_3 \Box P_{m-1}$ and $P_3 \Box P_{k-m}$. Since $k \geq 22$, at least one of $m-1$ and $k-m$ is either 9, or at least 11. Then, from Corollary \ref{cor-trim} we have $|S| \geq k + 2$, which contradicts our initial assumption. Therefore, there are no internal copies of type 0.

Suppose that $P_3^2$ is of type 0. In order to satisfy the domination condition, $P_3^1$ must have at least one vertex in $S$. Suppose $P_3^1$ is type 1, then $v^1_2 \in S$. However, if there is an attack at $v^1_1$, then the guard at $v^1_2$ must move to assist, leaving $v^1_3$ unguarded. Hence, $P_3^1$ must contain at least two vertices in $S$. Then, we can delete $P_3^1, P_3^2$, to obtain $P_3 \Box P_{k-2}$. Then, from Corollary \ref{cor-trim} we have $|S| \geq k+2$, which contradicts our initial assumption. An analagous argument can be made for $P_3^{k-1}$, and therefore there are no second-end copies of type 0.

At this stage, we know that either one or both of $P_3^1$ and $P_3^k$ must be type 0. Suppose they both are. Then, in order to satisfy the domination condition, $P_3^2$ and $P_3^{k-1}$ must both be type 3, which violates equation (\ref{eq-balance}). Hence, exactly one of them must be type 0. Due to symmetry, the choice is equivalent, so we assume $P_3^1$ is type 0. Then, $P_3^2$ is type 3, and all remaining copies must be type 1. However, this implies that $P_3^{k-3}, P_3^{k-2}, P_3^{k-1}, P_3^k$ are all type 1, and using the same argument as used in the proof of Lemma \ref{lem-P3Pnm0}, this is impossible. Hence, the initial assumption must be incorrect, completing the proof.\end{proof}

\section{The secure domination number of $P_3 \Box C_n$}

We now consider $P_3 \Box C_n$. We will prove that:

$$\gamma_s(P_3 \Box C_n) = \left\{\begin{array}{lcc}n + 1 & & \mbox{ if } n = 4, 7,\\
3 \left\lceil \frac{n}{3} \right\rceil & & \mbox{otherwise.}\\
\end{array}\right.$$

We use the exact formulation from \cite{burdett} to confirm the above for $n \leq 26$. Then, suppose there is a value $k \geq 27$ such that the above is true for $n = 3, \hdots, k-1$, but $\gamma_s(P_3 \Box C_k) = 3 \left\lceil \frac{n}{3} \right\rceil - 1$. We will first focus on the cases when $k = 1\mbox{ mod }3$, and then handle the other cases afterwards. That is, there is a secure dominating set $S$ for $P_3 \Box C_k$ such that $|S| = k+1$.

\begin{thm}Suppose that $S$ is a secure dominating set for $P_3 \Box C_n$, and that $S$ contains the pattern 111111. Whichever vertex from the first copy is in $S$, the same vertex from the fourth copy is also in $S$.\label{thm-111111}\end{thm}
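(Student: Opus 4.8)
The plan is to argue entirely inside the six consecutive type-$1$ copies $G^1,\dots,G^6$ supplied by the pattern. Write $v^i_1,v^i_2,v^i_3$ for the top, middle and bottom vertices of $G^i$, and let $p_i\in\{1,2,3\}$ record which single vertex of $G^i$ lies in $S$, so the claim is precisely that $p_4=p_1$. Since $P_3\Box C_n$ admits the vertical reflection exchanging $v^i_1$ with $v^i_3$ as an automorphism preserving secure domination, I would first use it to reduce to the cases $p_1=1$ and $p_1=2$, treating $p_1=3$ as the mirror image of $p_1=1$. The argument is then a forward trace: taking $p_1$ as given, I apply the domination condition (each vertex outside $S$ has a neighbour in $S$) together with the secure-domination condition (some such neighbour may be moved onto it while $S$ stays dominating) to pin down $p_2$, then $p_3$, then $p_4$, always choosing guard relocations whose consequences remain inside the window $G^1,\dots,G^6$.

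The driving observation is the rigidity of the secure domination of the \emph{middle} vertices. If $p_i=1$ then $v^i_3$ has no neighbour of $S$ inside its own copy, so domination already forces an adjacent copy at position $3$; and unless a neighbour sits at position $2$, the vertex $v^i_2$ has $v^i_1$ as its only guard, whose relocation onto $v^i_2$ strips the cover from $v^{i-1}_1$ and $v^{i+1}_1$. Tracking precisely which vertex is left undominated under each admissible guard move is what removes the unwanted local patterns. A representative step, for $p_1=1$, is the exclusion of $p_2=1$: domination then forces $p_3=3$; the secure domination of $v^2_2$ (whose only guard is $v^2_1$) forces $p_4=1$; and with that, $v^3_2$ has $v^3_3$ as its sole guard, whose move uncovers $v^2_3$ --- a contradiction. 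Similar computations discharge the remaining false starts (a copy placed opposite to both flanks, as in $1,3,1$, unless rescued by an adjacent position-$2$ copy; and period-two oscillations such as $1,2,1,2$, where already the bottom vertex of a $1$-copy between two $2$-copies is undominated), leaving only the two diagonals $1,2,3,1,2,3$ and $1,3,2,1,3,2$, in which the positions advance by one step of the cyclic order $1\to2\to3\to1$ in a single direction; both have period three, so $p_4=p_1$. The case $p_1=2$ is handled the same way: now $v^1_1$ and $v^1_3$ are covered internally by $v^1_2$, and the forcing propagates through $G^2,G^3,G^4$ to the same diagonal structure, giving $p_4=2$.

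The hard part, and the reason the hypothesis must supply six copies rather than four, is the non-locality of secure domination: relocating a guard out of $G^{i\pm1}$ can leave the top or bottom vertex of $G^{i\pm2}$ as the only thing in jeopardy, so ruling out a candidate configuration sometimes requires information one further copy out (as in the $p_2=1$ step above, where the contradiction only surfaces after $p_4$ is forced). I would therefore route the argument so that every decisive guard move's consequence lands inside $G^1,\dots,G^6$, anchoring the look-aheads on the genuinely internal copies $G^3$ and $G^4$ and using $G^2$ only to relay the constraint back to $G^1$; this keeps each deduction checkable without reference to the unknown copies flanking the block. The remaining effort is organisational --- enumerating the few branches for $p_2$ and $p_3$, discharging each by exhibiting the single vertex that cannot be securely dominated, and recording the forced configurations in figures of the solid-circle/hollow-circle/cross/strikeout style used earlier. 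I expect the only genuinely delicate point to be this middle-vertex guard-tracking; the rest is routine case checking.
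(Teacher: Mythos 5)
Your proposal is correct and takes essentially the same route as the paper: a finite case analysis of which vertex lies in $S$ in each of the consecutive type-1 copies, eliminating every configuration with $p_1\neq p_4$ by exhibiting an attacked vertex each of whose admissible guard relocations leaves some other vertex undominated. The paper organises this as nine cases up to symmetry (Figure \ref{fig-9cases}) rather than as a forward trace, but the individual deductions coincide; your version additionally discharges the configurations in which adjacent copies use the same position, which the paper's enumeration sets aside.
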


\begin{proof}Suppose it is not true. In Figure \ref{fig-9cases} we display, up to symmetry, all possible situations in which the first and fourth copies have different vertices in $S$, and no neighbouring copies share the same vertex in $S$. In cases 1, 2, 7 and 9, the domination property is not satisfied. For the remaining cases, we will demonstrate that an attack at a certain vertex forces a guard to move and leave another vertex unguarded. In each case, we refer to the six copies as $G^1, \hdots, G^6$ respectively, and the choice of vertices to be contained in $S$ is fixed for $G^1, \hdots, G^4$. We will refer to the top, central, and bottom vertices of $G^i$ as $v^i_1$, $v^i_2$ and $v^i_3$ respectively.

In case 3, if there is an attack at $v^2_2$, the guard at $v^2_3$ is forced to move to assist, leaving $v^3_3$ unguarded.

In case 4, if there is an attack at $v^3_2$, the guard at $v^2_2$ cannot move to assist without leaving $v^2_3$ unguarded. Hence, the guard at $v^3_1$ must move to assist, which implies that $v^5_1 \in S$ in order to avoid leaving $v^4_1$ unguarded. Then, if there is an attack at $v^4_2$, the guard at $v^4_3$ is forced to move to assist, leaving $v^3_3$ unguarded.

In case 5, in order to satisfy the domination condition, there must be a guard at $v^5_1$. Then, if there is an attack at $v^4_1$, the guard at $v^5_1$ must move to assist, which implies that $v^6_2 \in S$ in order to avoid leaving $v^5_2$ unguarded. Then, if there is an attack at $v^4_2$, either the guard at $v^3_2$ or the guard at $v^4_3$ must move to assist. In the former case, $v^3_1$ is left unguarded. In the latter case, $v^5_3$ is left unguarded.

In case 6, if there is an attack at $v^3_2$, the guard at $v^3_1$ must move to assist, which implies that $v^5_1 \in S$ in order to avoid leaving $v^4_1$ unguarded. Then, if there is an attack at $v^3_3$, either the guard at $v^2_3$ or the guard at $v^4_3$ must move to assist. In the former case, $v^2_2$ is left unguarded. In the latter case, $v^4_2$ is left unguarded.

In case 8, if there is an attack at $v^3_2$, the guard at $v^3_3$ must move to assist, leaving $v^2_3$ unguarded.\end{proof}

\begin{figure}[h!]\begin{center}
\begin{minipage}{0.03\textwidth}(1)\end{minipage} \begin{minipage}{0.4\textwidth}\begin{tikzpicture}[smallgraph]
\gridgraph{3}{6}
\inS{1}{1} \inS{2}{2} \inS{3}{1} \inS{4}{2}
\notS{1}{2} \notS{1}{3} \notS{2}{1} \notS{2}{3} \notS{3}{2} \notS{3}{3} \notS{4}{1} \notS{4}{3}
\end{tikzpicture}
\end{minipage}\hspace*{1cm}\begin{minipage}{0.03\textwidth}(6)\end{minipage} \begin{minipage}{0.4\textwidth}\begin{tikzpicture}[smallgraph]
\gridgraph{3}{6}
\inS{1}{1} \inS{2}{3} \inS{3}{1} \inS{4}{3} \inSa{5}{1}
\notS{1}{2} \notS{1}{3} \notS{2}{1} \notS{2}{2} \notS{3}{2} \notS{3}{3} \notS{4}{1} \notS{4}{2} \notSa{5}{2} \notSa{5}{3}
\end{tikzpicture}\end{minipage}

\vspace*{0.25cm}\begin{minipage}{0.03\textwidth}(2)\end{minipage} \begin{minipage}{0.4\textwidth}\begin{tikzpicture}[smallgraph]
\gridgraph{3}{6}
\inS{1}{1} \inS{2}{2} \inS{3}{3} \inS{4}{2}
\notS{1}{2} \notS{1}{3} \notS{2}{1} \notS{2}{3} \notS{3}{1} \notS{3}{2} \notS{4}{1} \notS{4}{3}
\end{tikzpicture}
\end{minipage}\hspace*{1cm}\begin{minipage}{0.03\textwidth}(7)\end{minipage} \begin{minipage}{0.4\textwidth}
\begin{tikzpicture}[smallgraph]
\gridgraph{3}{6}
\inS{1}{2} \inS{2}{1} \inS{3}{2} \inS{4}{1}
\notS{1}{1} \notS{1}{3} \notS{2}{2} \notS{2}{3} \notS{3}{1} \notS{3}{3} \notS{4}{2} \notS{4}{3}
\end{tikzpicture}\end{minipage}

\vspace*{0.25cm}\begin{minipage}{0.03\textwidth}(3)\end{minipage} \begin{minipage}{0.4\textwidth}\begin{tikzpicture}[smallgraph]
\gridgraph{3}{6}
\inS{1}{1} \inS{2}{3} \inS{3}{1} \inS{4}{2}
\notS{1}{2} \notS{1}{3} \notS{2}{1} \notS{2}{2} \notS{3}{2} \notS{3}{3} \notS{4}{1} \notS{4}{3}
\end{tikzpicture}
\end{minipage}\hspace*{1cm}\begin{minipage}{0.03\textwidth}(8)\end{minipage} \begin{minipage}{0.4\textwidth}\begin{tikzpicture}[smallgraph]
\gridgraph{3}{6}
\inS{1}{2} \inS{2}{1} \inS{3}{3} \inS{4}{1}
\notS{1}{1} \notS{1}{3} \notS{2}{2} \notS{2}{3} \notS{3}{1} \notS{3}{2} \notS{4}{2} \notS{4}{3}
\end{tikzpicture}\end{minipage}

\vspace*{0.25cm}\begin{minipage}{0.03\textwidth}(4)\end{minipage} \begin{minipage}{0.4\textwidth}\begin{tikzpicture}[smallgraph]
\gridgraph{3}{6}
\inS{1}{1} \inS{2}{2} \inS{3}{1} \inS{4}{3} \inSa{5}{1}
\notS{1}{2} \notS{1}{3} \notS{2}{1} \notS{2}{3} \notS{3}{2} \notS{3}{3} \notS{4}{1} \notS{4}{2} \notSa{5}{2} \notSa{5}{3}
\end{tikzpicture}
\end{minipage}\hspace*{1cm}\begin{minipage}{0.03\textwidth}(9)\end{minipage} \begin{minipage}{0.4\textwidth}\begin{tikzpicture}[smallgraph]
\gridgraph{3}{6}
\inS{1}{2} \inS{2}{3} \inS{3}{2} \inS{4}{1}
\notS{1}{1} \notS{1}{3} \notS{2}{1} \notS{2}{2} \notS{3}{1} \notS{3}{3} \notS{4}{2} \notS{4}{3}
\end{tikzpicture}\end{minipage}

\vspace*{0.25cm}\begin{minipage}{0.03\textwidth}(5)\end{minipage} \begin{minipage}{0.4\textwidth}\begin{tikzpicture}[smallgraph]
\gridgraph{3}{6}
\inS{1}{1} \inS{2}{3} \inS{3}{2} \inS{4}{3} \inSa{5}{1} \inSa{6}{2}
\notS{1}{2} \notS{1}{3} \notS{2}{1} \notS{2}{2} \notS{3}{1} \notS{3}{3} \notS{4}{1} \notS{4}{2} \notSa{5}{2} \notSa{5}{3} \notSa{6}{1} \notSa{6}{3}
\end{tikzpicture}
\end{minipage}\hspace*{1cm}\begin{minipage}{0.03\textwidth}\hspace*{20cm}\end{minipage}\begin{minipage}{0.4\textwidth}\hspace*{20cm}\end{minipage}
\end{center}
\caption{9 cases\label{fig-9cases}}\end{figure}

The next corollary follows immediately from Theorem \ref{thm-111111} and the fact that neighbouring copies cannot have the same vertices in $S$. To see this, suppose that neighbouring copies $P^i,P^{i+1}$ have the same vertices in $S$. Then by Lemmas \ref{thm-mods} and \ref{thm-mods2}, we can delete all edges between $P^i$ and $P^{i+1}$ to obtain the graph $P_3 \Box P_k$ for which $S$ remains secure dominating. Thus $|S| \geq \gamma_s(P_3 \Box P_{k}) \geq n+1$, which is a contradiction.

\begin{cor}Suppose $S$ is a secure dominating set for $P_3 \Box C_k$, $k = 1\mbox{ mod }3$, such that $|S| = k+1$. If $S$ contains the pattern 1111, then the first three copies of $P_3$ in that pattern each contain a different vertex in $S$.\label{cor-P3Cnthreedifferent}\end{cor}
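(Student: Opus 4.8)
The plan is to argue by contradiction and to localise any failure using the fact, just established, that neighbouring copies cannot select the same vertex of $S$ (since otherwise deleting the edges between them via Lemmas \ref{thm-mods} and \ref{thm-mods2} would leave $S$ secure dominating for $P_3 \Box P_k$, forcing $|S| \geq k+1$ in a way that contradicts the configuration). Label the four copies of the $1111$ block $P^1,P^2,P^3,P^4$ and record by $p_i \in \{1,2,3\}$ whether the top, central, or bottom vertex of $P^i$ lies in $S$. Suppose the first three copies are \emph{not} pairwise distinct. Since neighbouring copies differ we have $p_1 \neq p_2$ and $p_2 \neq p_3$, so the only possible coincidence is $p_1 = p_3$; and then $p_4 \neq p_3 = p_1$ forces $p_1 \neq p_4$. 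Hence $P^1,\dots,P^4$ realises an arrangement in which the first and fourth copies carry different vertices while no two neighbours agree — precisely the situation enumerated in Figure \ref{fig-9cases} during the proof of Theorem \ref{thm-111111}. The whole task reduces to showing that every such arrangement with $p_1 = p_3$ is impossible.

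Using the top/bottom symmetry of $P_3$ I would split on the repeated value $p_1 = p_3$ together with $p_2$. If $p_1 = p_3$ is the central vertex, the configuration dies at once by domination: the end vertex of $P^2$ opposite to $p_2$ has no neighbour in $S$; this matches the purely dominative cases of Figure \ref{fig-9cases} (such as cases (7) and (9)). If instead $p_1 = p_3$ is an end vertex, I would next apply the domination condition to the opposite end vertex of $P^3$: this either fails immediately (excluding the variant in which $p_4$ equals $p_2$, the analogue of case (1)), or it is rescued by a short attack argument confined to $P^2,P^3,P^4$ (the analogue of case (3)). For all of these branches the contradiction never leaves the $1111$ block, so the corresponding arguments from the proof of Theorem \ref{thm-111111} carry over verbatim.

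The main obstacle is the remaining branches, which correspond to cases (4) and (6) of Figure \ref{fig-9cases}: here domination forces the opposite end vertex of $P^4$ into $S$, and the attack used to finish Theorem \ref{thm-111111} reaches into the fifth copy. Under the hypotheses of that theorem the fifth copy is type $1$, which is exactly what leaves the central vertex of $P^4$ unguardable once the responding guard moves; but the corollary only assumes the pattern $1111$, so a priori the fifth copy could contribute an extra vertex that absorbs this attack. The crux is therefore to pin down the fifth copy: I would try to show either that applying the secure-domination requirement to $P^1,\dots,P^4$ alone already produces an unguardable vertex, or — should those configurations genuinely force an additional vertex into the fifth copy — that the resulting surplus is incompatible with $|S| = k+1$ when $k \equiv 1 \pmod{3}$. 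Resolving this point eliminates every arrangement with $p_1 = p_3$, after which $p_1,p_2,p_3$ must be pairwise distinct, as claimed.
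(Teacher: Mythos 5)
Your opening reduction is sound: adjacent type-1 copies cannot select the same vertex (the edge-deletion argument to $P_3\Box P_k$), so the only possible failure is $p_1=p_3$, which automatically forces $p_1\neq p_4$ and places the block among the configurations of Figure \ref{fig-9cases}. But from there you take a much longer road than the paper and stop before the end. The paper never reopens the case analysis: it invokes only the \emph{conclusion} of Theorem \ref{thm-111111}, namely that the first and fourth copies carry the same vertex, so $p_1=p_4\neq p_3$, and together with $p_1\neq p_2$ and $p_2\neq p_3$ the three values are pairwise distinct. That is the entire intended proof, and it is what you should have written --- provided Theorem \ref{thm-111111} is applicable, i.e.\ provided the four type-1 copies sit inside a run of six.

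That proviso is exactly where your unresolved ``crux'' lives, and as written your proposal does not close it: you correctly observe that for the configurations of cases (4) and (6) the finishing attack must know that the fifth copy is type 1 (it needs $v^5_1\in S$ \emph{and} $v^5_2\notin S$), and you then only list two strategies you ``would try.'' Neither is carried out, and the first one cannot succeed: if the fifth copy is, say, type 2 containing its top and central vertices, the local attack arguments of cases (4) and (6) genuinely fail to produce an unguarded vertex, so no argument confined to the $1111$ window can finish. You have in fact put your finger on a real imprecision in the paper --- its ``follows immediately'' also silently assumes six consecutive type-1 copies, since Theorem \ref{thm-111111} hypothesises the pattern $111111$, not $1111$. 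The practical repair is not to chase the fifth copy with counting arguments but to note that every subsequent use of the corollary (Corollary \ref{cor-P3Cn0mod3}, where all copies are type 1, and Lemma \ref{lem-P3Cn1s}, where nine consecutive copies are type 1) applies it inside a run of at least six type-1 copies, where the two-line derivation via $p_1=p_4\neq p_3$ is available. So either strengthen the hypothesis to $111111$ and give that short argument, or accept that the statement for an isolated $1111$ requires a genuinely new global argument that neither you nor the paper supplies.
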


Corollary \ref{cor-P3Cnthreedifferent} itself leads to another corollary.

\begin{cor}Suppose $S$ is a secure dominating set for $P_3 \Box C_k$. If every copy of $P_3$ in $S$ is of type 1, then $k = 0\mbox{ mod }3$.\label{cor-P3Cn0mod3}\end{cor}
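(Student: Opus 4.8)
The plan is to prove the contrapositive together with a direct parity argument: assume every copy of $P_3$ in $S$ is of type 1, and show that the cyclic structure forces $k = 0\mbox{ mod }3$. Since all $k$ copies are type 1, the pattern $1111$ appears everywhere, so Corollary \ref{cor-P3Cnthreedifferent} applies to every window of four consecutive copies. First I would label, for each copy $P_3^i$, the position $p_i \in \{1,2,3\}$ of the unique vertex it contributes to $S$ (top, centre, or bottom). The two ingredients I can invoke are: no two neighbouring copies share the same vertex (the fact cited just before Corollary \ref{cor-P3Cnthreedifferent}, coming from Lemmas \ref{thm-mods} and \ref{thm-mods2}), so $p_i \neq p_{i+1}$ for all $i$ (indices mod $k$); and Corollary \ref{cor-P3Cnthreedifferent} itself, which says that in any four consecutive copies the first three use three distinct vertices, i.e. $\{p_i, p_{i+1}, p_{i+2}\} = \{1,2,3\}$ for all $i$.

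The heart of the argument is to show that these two local conditions force the sequence $(p_i)$ to be periodic with period exactly $3$. Given that $\{p_i, p_{i+1}, p_{i+2}\}$ is a full permutation of $\{1,2,3\}$, the value $p_{i+2}$ is the unique element of $\{1,2,3\}$ distinct from both $p_i$ and $p_{i+1}$; hence $p_{i+2}$ is completely determined by the ordered pair $(p_i,p_{i+1})$. Applying the same statement to the next window, $p_{i+3}$ is the unique element distinct from $p_{i+1}$ and $p_{i+2}$. I would then check the short computation that $p_{i+3} = p_i$: since $\{p_{i+1},p_{i+2},p_{i+3}\} = \{1,2,3\}$ and $\{p_i,p_{i+1},p_{i+2}\} = \{1,2,3\}$, the two sets coincide, and removing the common pair $\{p_{i+1},p_{i+2}\}$ from each leaves $p_{i+3} = p_i$. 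Thus the sequence satisfies $p_{i+3} = p_i$ for every $i$, so it is periodic with period dividing $3$; and because consecutive terms differ, the period is not $1$, so it is exactly $3$.

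Finally, closing the cycle gives the divisibility. Because the indices live in $\mathbb{Z}/k\mathbb{Z}$, the relation $p_{i+3} = p_i$ must be consistent all the way around, which forces $3 \mid k$. Concretely, the sequence $p_1, p_2, \ldots, p_k$ read cyclically must be a repetition of a fixed block of length $3$ (say the permutation $(a,b,c)$ of $\{1,2,3\}$ determined by $p_1,p_2,p_3$), and such a repetition closes up consistently around a cycle of length $k$ only when $k$ is a multiple of $3$; otherwise we would obtain $p_{k+1} = p_1$ conflicting with the forced value from the period-$3$ pattern (for instance, if $k = 1\mbox{ mod }3$ the pattern would force $p_{k+1} = p_2 \neq p_1$, and if $k = 2\mbox{ mod }3$ it would force $p_{k+1} = p_3 \neq p_1$, either of which contradicts the identification $p_{k+1} = p_1$). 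Hence $k = 0\mbox{ mod }3$.

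The main obstacle I anticipate is purely bookkeeping rather than conceptual: one must be careful that Corollary \ref{cor-P3Cnthreedifferent} as stated is phrased for $k = 1\mbox{ mod }3$, whereas the present corollary is stated for a general $P_3 \Box C_k$. I would therefore lean on the underlying Theorem \ref{thm-111111} (which has no congruence restriction) together with the no-repeated-neighbour fact directly, rather than on the specialized corollary, so that the three-distinct-vertices conclusion $\{p_i,p_{i+1},p_{i+2}\}=\{1,2,3\}$ is available for arbitrary $k$. Once that is secured, the periodicity and the closing-the-cycle step are short and routine.
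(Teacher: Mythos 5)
Your proposal is correct and follows essentially the same route as the paper, which derives the corollary from Theorem \ref{thm-111111} (giving $p_{i+3}=p_i$) together with the no-repeated-neighbour fact, and then closes the period-3 pattern around the cycle. Your observation that one should cite Theorem \ref{thm-111111} directly rather than Corollary \ref{cor-P3Cnthreedifferent} (whose statement carries the hypotheses $k = 1\mbox{ mod }3$ and $|S|=k+1$) is a legitimate and worthwhile tightening of the paper's terse justification, not a different argument.
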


As discussed in Section \ref{sec-notation}, the proof of the following lemma is a more general version of that for Lemma \ref{lem-smoothing}. Instead of removing a set of copies, we instead identify a subgraph which contains some, but not all, vertices from various consecutive copies, and which only connects to the rest of the graph via edges whose endpoints are not in $S$. Then we are able to ``smooth" out this subgraph in a natural way. We will show here the full process of deleting the relevant edges, setting aside the disconnected subgraph, and adding in the new edges to complete the smoothing.

\begin{lem}Suppose $S$ is a secure dominating set for $P_3 \Box C_k$, $k = 1\mbox{ mod }3$, such that $|S| = k+1$, but that $\gamma_s(P_3 \Box C_{k-3}) = k-1$. Then $S$ does not contain the pattern 111111111.\label{lem-P3Cn1s}\end{lem}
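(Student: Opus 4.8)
The plan is to assume, for contradiction, that $S$ contains the pattern 111111111, and to use the resulting rigid structure to build a secure dominating set for $P_3 \Box C_{k-3}$ of size $k-2$, contradicting the hypothesis $\gamma_s(P_3 \Box C_{k-3}) = k-1$. First I would pin down the structure of the nine type-1 copies $G^1,\dots,G^9$. Theorem \ref{thm-111111} forces the vertex of $S$ to repeat with period three along a run of type-1 copies, while Corollary \ref{cor-P3Cnthreedifferent} forces any three consecutive type-1 copies to use three distinct vertices. Hence the occupied row cycles through $\{1,2,3\}$ with period three, and up to the top--bottom flip automorphism of $P_3$ I may assume the vertex of $S$ in $G^i$ is $v^i_{r}$ with $r = ((i-1)\bmod 3)+1$; that is, $S$ meets these copies in $v^1_1,v^2_2,v^3_3,v^4_1,v^5_2,v^6_3,v^7_1,v^8_2,v^9_3$.

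Next I would isolate a ``thick diagonal'' subgraph spanning the middle copies to be smoothed away, namely $U=\{v^3_1,v^4_1,v^4_2,v^5_1,v^5_2,v^5_3,v^6_2,v^6_3,v^7_3\}$, which contains exactly the three vertices $v^4_1,v^5_2,v^6_3$ of $S$. The two things to check are that each of these three $S$-vertices is interior to $U$ (all of its neighbours lie in $U$) and that every vertex of $U$ having a neighbour outside $U$ is not in $S$; both are immediate from the listed neighbourhoods. Thus the hypothesis of Corollary \ref{cor-trim} is met and, moreover, the three $S$-vertices of $U$ dominate nothing outside $U$, so that all domination and guarding of vertices of $V\setminus U$ is carried out within $V\setminus U$. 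Exactly as in the proof of Corollary \ref{cor-trim} and the smoothing of Lemma \ref{lem-smoothing}, I may delete all edges between $U$ and its complement and set $U$ aside, leaving $S' = S\setminus U$ secure dominating on $G[V\setminus U]$, with $|S'| = (k+1)-3 = k-2$ and $|V\setminus U| = 3(k-3)$.

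The crucial --- and only delicate --- step is to recognise $G[V\setminus U]$ as a spanning subgraph of $P_3 \Box C_{k-3}$. The six surviving fragment vertices are $v^3_2,v^3_3,v^4_3$ and $v^6_1,v^7_1,v^7_2$; the naive grouping into a ``left'' and a ``right'' copy fails, because it would force an $S$-vertex into the central row of a new copy while its surviving external edge lands on a non-central row, which cannot be a rung. Instead I would let the two replacement copies straddle the excised region: take $P=\{v^6_1,v^3_2,v^3_3\}$ with rows (top, mid, bot) and $Q=\{v^7_1,v^7_2,v^4_3\}$ with rows (top, mid, bot), inserted into the cycle as $\dots,G^2,P,Q,G^8,\dots$. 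A direct check then shows every surviving edge is legal: $v^3_2v^3_3$ is the mid--bot edge of $P$; $v^7_1v^7_2$ is the top--mid edge of $Q$; $v^3_3v^4_3$ and $v^6_1v^7_1$ are the bot--bot and top--top rungs between $P$ and $Q$; and $v^2_2v^3_2$, $v^2_3v^3_3$, $v^7_1v^8_1$, $v^7_2v^8_2$ are same-row rungs joining $P$ to $G^2$ and $Q$ to $G^8$. Hence $G[V\setminus U]$ embeds as a spanning subgraph of $P_3 \Box C_{k-3}$, and adding the missing edges keeps $S'$ secure dominating by item 1 of Lemma \ref{thm-mods}. This yields $\gamma_s(P_3 \Box C_{k-3}) \le k-2 < k-1$, the desired contradiction.

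The main obstacle is precisely this last recognition step. Deleting the whole copies $G^4,G^5,G^6$ would glue cleanly (the diagonal has period three, so $G^3$ and $G^7$ match), but it leaves the $S$-vertices $v^4_1,v^6_3$ on the boundary, so Corollary \ref{cor-trim} does not apply; conversely the clean-boundary set $U$ reconnects with an apparent one-row ``twist'' that at first glance seems to produce a twisted rather than a genuine Cartesian product. The straddling choice of $P$ and $Q$ is exactly what absorbs this twist and certifies that the smoothed graph really is (a subgraph of) $P_3 \Box C_{k-3}$. I would finally remark that the second diagonal orientation is handled identically, via the same flip symmetry used to normalise the pattern at the outset.
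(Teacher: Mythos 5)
Your proof is correct and takes essentially the same route as the paper: both arguments excise the closed neighbourhood of three consecutive diagonal $S$-vertices (a nine-vertex ``thick diagonal'' whose boundary edges all avoid $S$) and re-glue the remainder into $P_3 \Box C_{k-3}$, and your explicit straddling copies $P$ and $Q$ simply make precise the reconnection that the paper leaves to Figure \ref{fig-P3Cndiagonal}. The one small imprecision is that the top--bottom flip alone does not let you place the descending run at copies $4,5,6$ (the diagonal's offset modulo $3$ is a free parameter), but since your construction needs only five consecutive type-1 copies carrying the run $1,2,3$ plus one spare copy on each side, a suitable centring always exists within nine consecutive type-1 copies --- which is exactly what the paper is noting when it allows the diagonal to start at the third, fourth or fifth copy.
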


\begin{proof}Suppose that $S$ does contain the pattern 111111111. From Corollary \ref{cor-P3Cnthreedifferent}, the first three copies in that pattern contain a different vertex in $S$, and then from Theorem \ref{thm-111111} this is then repeated for the next three copies, and then the three copies after that. It is clear then that this turns into a diagonal pattern which begins at one of the first three copies, and then continues throughout the pattern. The diagonal pattern can either go from the top vertex down to the bottom vertex, or vice versa; due to symmetry, these are equivalent, so we will assume the former.

Now, suppose we look at one instance of this diagonal pattern starting at either the third, fourth or fifth copy. An example for the diagonal pattern starting at the third copy is displayed in part 1 of Figure \ref{fig-P3Cndiagonal}. Then it is possible to identify a set of five edges to the left of the pattern which are all between vertices not in $S$, and likewise five edges to the right of the pattern, that can be deleted, and by Corollary \ref{cor-trim}, $S$ is still secure dominating in the resulting, disconnected graph. Finally, we can add in five edges to rejoin the pattern together, which again does not prevent $S$ from being secure dominating in the resulting graph. What results is the union of $P_3 \Box C_{k-3}$, plus another subgraph which contains exactly three vertices in $S$. Hence, $|S| \geq k-1+3$ which contradicts the initial assumption. Note that the process of deleting edges spanned seven copies of $P_3$, starting two copies to the left of the diagonal pattern, and ending four copies to the right. Then, since the diagonal pattern will start at either the third, fourth or fifth copy, this process can be performed if there are at least nine consecutive type 1 copies, and so $S$ cannot contain the pattern 111111111. The process of deleting and adding edges is illustrated in parts 2 and 3 of Figure \ref{fig-P3Cndiagonal}.\end{proof}

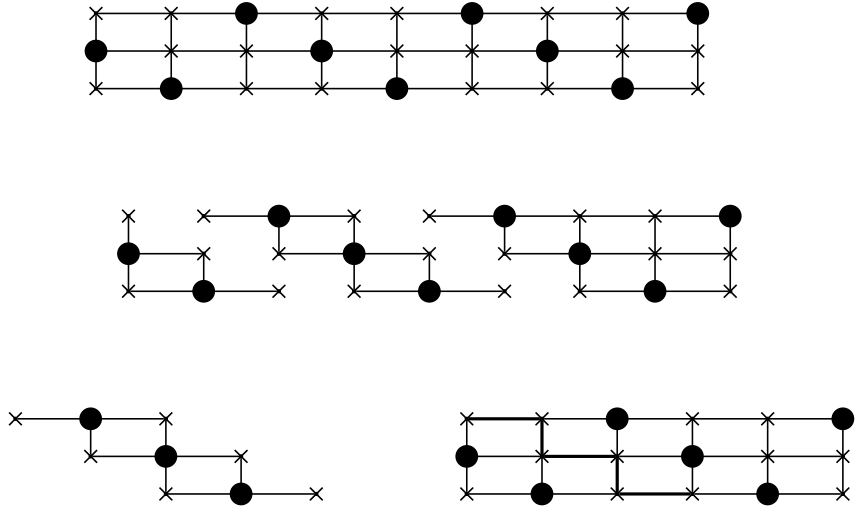
\begin{figure}[h!]\begin{center}\begin{tikzpicture}[smallgraph]\gridgraph{3}{9}
\inS{1}{2} \inS{2}{3} \inS{3}{1} \inS{4}{2} \inS{5}{3} \inS{6}{1} \inS{7}{2} \inS{8}{3} \inS{9}{1}
\notS{1}{1} \notS{1}{3} \notS{2}{1} \notS{2}{2} \notS{3}{2} \notS{3}{3} \notS{4}{1} \notS{4}{3} \notS{5}{1} \notS{5}{2}
\notS{6}{2} \notS{6}{3} \notS{7}{1} \notS{7}{3} \notS{8}{1} \notS{8}{2} \notS{9}{2} \notS{9}{3}
\end{tikzpicture}

\begin{tikzpicture}[smallgraph]
\foreach \n in {1,...,3} {\foreach \m in {1,...,9} {\node (\n\m) at (\m,-0.5*\n) {};}}
\draw (11) -- (31); \draw (21) -- (22); \draw (22) -- (32); \draw (31) -- (33); \draw (12) -- (14); \draw (13) -- (23);
\draw (23) -- (25); \draw (14) -- (34); \draw (34) -- (36); \draw (25) -- (35); \draw (15) -- (19); \draw (26) -- (29);
\draw (37) -- (39); \draw (16) -- (26); \draw (17) -- (37); \draw (18) -- (38); \draw (19) -- (39);
\inS{1}{2} \inS{2}{3} \inS{3}{1} \inS{4}{2} \inS{5}{3} \inS{6}{1} \inS{7}{2} \inS{8}{3} \inS{9}{1}
\notS{1}{1} \notS{1}{3} \notS{2}{1} \notS{2}{2} \notS{3}{2} \notS{3}{3} \notS{4}{1} \notS{4}{3} \notS{5}{1} \notS{5}{2}
\notS{6}{2} \notS{6}{3} \notS{7}{1} \notS{7}{3} \notS{8}{1} \notS{8}{2} \notS{9}{2} \notS{9}{3}
\draw[white] (0,0) -- (0,1);
\end{tikzpicture}

\begin{tikzpicture}[smallgraph]
\foreach \n in {1,...,3} {\foreach \m in {7,...,12} {\node (\n\m) at (\m,-0.5*\n) {};}}
\node (11) at (1,-0.5) {}; \node (12) at (2,-0.5) {}; \node (13) at (3,-0.5) {};
\node (22) at (2,-1) {}; \node (23) at (3,-1) {}; \node (24) at (4,-1) {};
\node (33) at (3,-1.5) {}; \node (34) at (4,-1.5) {}; \node (35) at (5,-1.5) {};
\draw (11) -- (13); \draw (12) -- (22); \draw (22) -- (24); \draw (13) -- (33); \draw (33) -- (35); \draw (24) -- (34);
\draw (27) -- (28); \draw (37) -- (39); \draw (18) -- (112); \draw (29) -- (212); \draw (310) -- (312); \draw (17) -- (37);
\draw (18) -- (38); \draw (19) -- (39); \draw (110) -- (310); \draw (111) -- (311); \draw (112) -- (312); \draw[very thick] (17) -- (18);
\draw[very thick] (18) -- (28); \draw[very thick] (28) -- (29); \draw[very thick] (29) -- (39); \draw[very thick] (39) -- (310);
\inS{2}{1} \inS{3}{2} \inS{4}{3} \inS{7}{2} \inS{8}{3} \inS{9}{1} \inS{10}{2} \inS{11}{3} \inS{12}{1}
\notS{1}{1} \notS{2}{2} \notS{3}{1} \notS{3}{3} \notS{4}{2} \notS{5}{3} \notS{7}{1} \notS{7}{3} \notS{8}{1} \notS{8}{2}
\notS{9}{2} \notS{9}{3} \notS{10}{1} \notS{10}{3} \notS{11}{1} \notS{11}{2} \notS{12}{2} \notS{12}{3}
\draw[white] (0,0) -- (0,1);
\end{tikzpicture}

\caption{The situation described in the second part of Lemma \ref{lem-P3Cn1s}. The first part shows an example of the pattern 111111111. The second part shows the pattern after the ten edges are deleted. The third part shows the final situation, with the disconnected subgraph moved aside, and five new edges added to reconnected the pattern.\label{fig-P3Cndiagonal}}\end{center}\end{figure}

\begin{lem}Suppose $S$ is a secure dominating set for $P_3 \Box C_k$, $k = 1\mbox{ mod }3$, such that $|S| = k+1$, but that $\gamma_s(P_3 \Box C_{k-3}) = k-1$. Then $m_3 = 0$.\label{lem-P3Cnm3}\end{lem}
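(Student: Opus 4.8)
The plan is to argue by contradiction: assume $m_3 \geq 1$ and fix a copy $P_3^m$ of type 3, so that $v^m_1,v^m_2,v^m_3 \in S$. Since $|S| = k+1$ (with $k \equiv 1 \bmod 3$) is exactly one below the conjectured optimum, and since the lemma hands us $\gamma_s(P_3 \Box C_{k-3}) = k-1$, the whole strategy is to manufacture a reduction to $P_3 \Box C_{k-3}$ that accounts for at least three vertices of $S$. Any such reduction gives $|S| \geq \gamma_s(P_3 \Box C_{k-3}) + 3 = (k-1)+3 = k+2$, the desired contradiction.

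The cleanest situation is when $P_3^{m-1}$ and $P_3^{m+1}$ are both of type 0, that is, when $S$ contains the pattern $030$ centred at $P_3^m$. Then $\{P_3^{m-1},P_3^m,P_3^{m+1}\}$ forms a block of length $3$ carrying exactly three vertices of $S$, and Lemma \ref{lem-smoothing} deletes it to produce $P_3 \Box C_{k-3}$, so $|S| \geq \gamma_s(P_3 \Box C_{k-3}) + 3 = k+2$. I would stress here that a type 3 copy cannot itself be set aside by the edge-removal arguments of Lemmas \ref{thm-mods} and \ref{thm-mods2}, precisely because all three of its vertices lie in $S$; it is exactly the flanking type 0 copies that supply the non-$S$ boundary required by Corollary \ref{cor-trim}. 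Consequently the entire burden of the proof reduces to showing that a type 3 copy must be flanked by type 0 copies, equivalently that it cannot have a neighbour of type 1, 2 or 3.

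I would dispose of the remaining possibilities by the local attack analysis used throughout this section. By the top--bottom symmetry of $P_3$ it suffices to examine $P_3^{m+1}$, and within that to treat the cases according to whether its vertex (or vertices) of $S$ lies in the outer, central, or inner row. In each case I would stage an attack at a vertex of $P_3^{m+1}$ or $P_3^{m+2}$, note which guard is forced to move, and track the vertex it vacates, aiming to show that either the domination condition fails outright or the relocation leaves some vertex unguarded, exactly in the style of Theorem \ref{thm-111111}. Where a single attack is inconclusive, I would propagate the forced choices along successive copies and call on the structural facts already established --- the diagonal behaviour of runs of type 1 copies in Theorem \ref{thm-111111}, the prohibition on nine consecutive type 1 copies in Lemma \ref{lem-P3Cn1s}, and the count $m_0 = m_2 + 2m_3 - 1$ obtained from (\ref{eq-balance}) --- to force either a forbidden pattern or a fresh occurrence of the pattern $030$ elsewhere, which can then be deleted as above.

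I expect this last step to be the main obstacle. Unlike the one-shot $030$ deletion, ruling out a non-type-0 neighbour of a type 3 copy seems to demand the propagation of domination and security constraints across several copies, and the natural candidate configurations are not locally contradictory: indeed a type 3 copy embedded in an otherwise diagonal run of type 1 copies is precisely how the extremal set of size $k+2$ from Section \ref{sec-upperbounds} is built, so the contradiction must come from the global deficit $|S| = k+1$ rather than from any local clash. The careful, symmetry-reduced enumeration of the admissible neighbourhoods of a type 3 copy, together with the bookkeeping ensuring that every branch terminates either in a documented unguarded vertex or in a reusable $030$ block, is where the real work of this lemma lies.
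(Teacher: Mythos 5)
Your opening move --- delete a length-3 block $030$ around the type 3 copy and invoke Lemma \ref{lem-smoothing} to get $|S| \geq \gamma_s(P_3 \Box C_{k-3}) + 3 = k+2$ --- is sound, and it is exactly why the hypothesis $\gamma_s(P_3 \Box C_{k-3}) = k-1$ appears in the statement. But you then reduce the whole lemma to the claim that a type 3 copy must be flanked by two type 0 copies, and that reduction is where the proposal breaks down. As you yourself observe, a type 3 copy sitting inside a diagonal run of type 1 copies is locally consistent (it is how the optimal set of size $k+2$ is built when $n = 1 \bmod 3$), so no amount of staged attacks and guard-tracking in a bounded window can rule it out; the contradiction has to come from the global count. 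You acknowledge this but offer no mechanism for converting the global deficit into the local flanking claim, so the main case is left unproved. Worse, the flanking claim is not a useful intermediate target: since the lemma's conclusion is $m_3 = 0$, any statement about the neighbours of type 3 copies under the hypothesis $|S| = k+1$ is essentially as strong as the lemma itself.

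The paper avoids the flanking claim entirely. It takes the \emph{whole} block containing the type 3 copy --- running out to the nearest type 0 copy on each side, however far away --- and deletes that. A block of length $l$ whose interior contains a type 3 copy carries at least $l$ vertices of $S$, so Lemma \ref{lem-smoothing} gives $|S| \geq \gamma_s(P_3 \Box C_{k-l}) + l$, which contradicts $|S| = k+1$ except in a short list of degenerate cases ($k-l \in \{4,7\}$ with the block containing exactly one type 3 copy and no type 2 copies). Those survivors force $l \geq k-7 \geq 20$, but Lemma \ref{lem-P3Cn1s} caps a run of type 1 copies at eight, so such a block has length at most $2 + 8 + 1 + 8 = 19$: contradiction. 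The remaining case where no block exists at all ($m_0 \leq 1$) is dispatched by equation (\ref{eq-balance}) together with Lemma \ref{lem-P3Cn1s}. So the ingredients you list (Lemma \ref{lem-P3Cn1s}, equation (\ref{eq-balance}), the block-deletion machinery) are the right ones, but they must be applied to the full block with a counting argument, not marshalled toward a local structure theorem about the immediate neighbours of a type 3 copy.
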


\begin{proof}Recall that the assumption is known to be false for $k \leq 26$. Hence, $k \geq 27$. Then, suppose $m_3 > 0$. Consider any block containing a type 3 copy. Suppose the block has length $l$, then it contains at least $l$ vertices in $S$. Then, from Lemma \ref{lem-smoothing} we obtain $|S| \geq \gamma_s(P_3 \Box P_{k-l}) + l$. By the results of Section \ref{secp3pn}, this is a contradiction unless $k-l \in \{4,7\}$ and the block contains no type 2 copies and exactly one type 3 copy. However, from Lemma \ref{lem-P3Cn1s}, such a block could only be at most length 19. Since $k \geq 27$, this is a contradiction.

The only remaining possibility is that there are no blocks, which implies that $m_0 \leq 1$. If $m_0 = 0$ then from equation (\ref{eq-balance}) we have $m_3 = 0$, violating the initial assumption. Hence, $m_0 = 1$, $m_1 = k-2$, $m_2 = 0$, and $m_3 = 1$. From Lemma \ref{lem-P3Cn1s}, this implies that $k \leq 18$, which is a contradiction. Hence, $m_3 = 0$.\end{proof}

\begin{lem}Suppose $S$ is a secure dominating set for $P_3 \Box C_k$, $k = 1\mbox{ mod }3$, such that $|S| = k+1$, but that $\gamma_s(P_3 \Box C_{k-3}) = k-1$. Then $S$ does not contain the pattern 011.\label{lem-P3Cn011}\end{lem}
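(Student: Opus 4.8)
The plan is to assume for contradiction that $S$ contains the pattern 011, realised by three consecutive copies $G^1, G^2, G^3$ of types $0,1,1$; write $v^i_1, v^i_2, v^i_3$ for the top, central and bottom vertices of $G^i$, and let $G^0$ be the copy immediately preceding $G^1$. The first step is to pin down $G^0$. Since $G^1$ is type 0, each vertex $v^1_j$ must be dominated from an adjacent copy, that is, by $v^0_j$ or $v^2_j$. As $G^2$ is type 1, only the single row $r$ with $v^2_r \in S$ can cover $G^1$ from the right, so for both values $j \neq r$ we are forced to have $v^0_j \in S$; combined with $m_3 = 0$ (Lemma \ref{lem-P3Cnm3}), this makes $G^0$ a type 2 copy occupying exactly the two rows other than $r$. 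I would then split on $r$: the central case $r=2$, and the top/bottom cases $r \in \{1,3\}$, which are interchanged by the top--bottom reflection symmetry of $P_3$, so it suffices to treat $r=1$.

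In the central case ($r=2$, so $v^2_2 \in S$ and $v^0_1, v^0_3 \in S$), I would launch an attack at $v^1_2$. Its only neighbour in $S$ is $v^2_2$, so defending it forces the guard at $v^2_2$ to move to $v^1_2$. After this move, both $v^2_1$ and $v^2_3$ lose their only neighbour in $S$, and their sole remaining potential dominators are $v^3_1$ and $v^3_3$ respectively; since $G^3$ is type 1 it cannot contain both, so $(S \setminus \{v^2_2\}) \cup \{v^1_2\}$ fails to dominate, contradicting secure domination.

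In the top case ($r=1$, so $v^2_1 \in S$ and $v^0_2, v^0_3 \in S$), the contradiction comes from combining a static and a dynamic constraint on the lone vertex of $G^3$. First, domination of $v^2_3$ is only possible via $v^3_3$, since its other neighbours $v^2_2$ and $v^1_3$ are not in $S$; this forces $G^3$ to carry its vertex in row 3. Next I would attack $v^1_1$, whose unique neighbour in $S$ is $v^2_1$ (note $v^0_1 \notin S$ because $G^0$ occupies rows 2 and 3); defending it forces the guard at $v^2_1$ to move, after which $v^2_2$ can only be re-dominated by $v^3_2$, forcing $G^3$ to carry its vertex in row 2 instead. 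These two requirements are incompatible, and the bottom case $r=3$ follows by symmetry.

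The main obstacle I anticipate is not the central case, where a single attack suffices, but the top/bottom cases: there one must observe that the plain domination requirement and the secure (attack-defence) requirement each pin the single vertex of $G^3$ to a different row, and one must verify in each instance that the guard move is genuinely forced because the attacked vertex has a \emph{unique} neighbour in $S$. Keeping precise track of which neighbours lie in $S$ --- in particular that $G^0$ is type 2 occupying the correct two rows, and that $G^1$ contributes nothing --- is where the care is needed.
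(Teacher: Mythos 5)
Your proof is correct and follows essentially the same route as the paper's: both arguments first use domination of the type-0 copy together with $m_3=0$ to force the preceding copy to be type 2 in the complementary rows, then split into the central-vertex case (where an attack on the middle vertex of the type-0 copy forces both outer vertices of the second type-1 copy into $S$) and the outer-vertex case (where domination pins the second type-1 copy's vertex to row 3 while an attack on the top vertex of the type-0 copy pins it to row 2). The only differences are cosmetic (indexing of the copies and phrasing of the final contradiction).
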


\begin{proof}Suppose that $S$ does contain the pattern 011. Consider four consecutive copies of $P_3$, denoted by $G^1, \hdots, G^4$, and suppose that $G^2$ is type 0, while $G^3$ and $G^4$ both are type 1. Then due to symmetry, there are two cases to consider. Either the middle vertex of $G^3$ is in $S$, or one of the upper and lower vertices of $G^3$ is in $S$.

For the former case, we have $v^3_2 \in S$. In order to satisfy the domination condition, we must have $v^1_1 \in S$ and $v^1_3 \in S$. Then, from Lemma \ref{lem-P3Cnm3} we know that $m_3 = 0$, and so $v^1_2 \not\in S$. Then, suppose there is an attack at $v^2_2$. The guard at $v^3_2$ must move to assist, which implies that both $v^4_1 \in S$ and $v^4_3 \in S$ in order to avoid leaving $v^3_1$ or $v^3_3$ unguarded.

For the latter case, due to symmetry, we can choose either $v^3_1 \in S$ or $v^3_3 \in S$; we will assume $v^3_1 \in S$. In order to satisfy the domination condition, we must have $v^1_2 \in S$, $v^1_3 \in S$, and $v^4_3 \in S$. Also, from Lemma \ref{lem-P3Cnm3} we know that $m_3 = 0$, and so $v^1_1 \not\in S$. Then, suppose there is an attack at $v^2_1$. The guard at $v^3_1$ must move to assist, which leaves $v^3_2$ unguarded. Hence, in both cases, a contradiction is reached. These two cases are displayed in Figure \ref{fig-P3Cn011}.\end{proof}

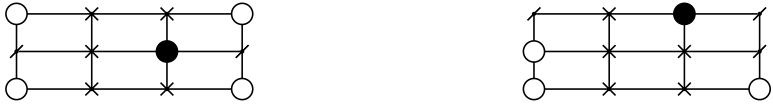
\begin{figure}[h!]\begin{center}
\begin{minipage}{0.4\textwidth}\begin{tikzpicture}[smallgraph]
\gridgraph{3}{4}
\inSa{1}{1} \inSa{1}{3} \inS{3}{2} \inSa{4}{1} \inSa{4}{3}
\notSa{1}{2} \notS{2}{1} \notS{2}{2} \notS{2}{3} \notS{3}{1} \notS{3}{3} \notSa{4}{2}
\end{tikzpicture}
\end{minipage}\hspace*{1cm}\begin{minipage}{0.4\textwidth}\begin{tikzpicture}[smallgraph]
\gridgraph{3}{4}
\inSa{1}{2} \inSa{1}{3} \inS{3}{1} \inSa{4}{3}
\notSa{1}{1} \notS{2}{1} \notS{2}{2} \notS{2}{3} \notS{3}{2} \notS{3}{3} \notSa{4}{1} \notSa{4}{2}
\end{tikzpicture}\end{minipage}
\caption{The two cases considered in Lemma \ref{lem-P3Cn011}.\label{fig-P3Cn011}}\end{center}\end{figure}

\begin{lem}Suppose $S$ is a secure dominating set for $P_3 \Box C_k$, $k = 1\mbox{ mod }3$, such that $|S| = k+1$, but that $\gamma_s(P_3 \Box C_{k-3}) = k-1$. If $k \geq 29$, then $m_0 \geq 2$.\label{lem-P3Cnm02}\end{lem}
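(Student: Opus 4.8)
The plan is to argue by contradiction, assuming $m_0 \leq 1$, and to show that in every case the type $1$ copies are forced into a run of at least nine consecutive copies, which is impossible by Lemma \ref{lem-P3Cn1s}. First I would combine the counting identity (\ref{eq-balance}), applied with $n = k$ and $|S| = k+1$, with the fact that $m_3 = 0$ from Lemma \ref{lem-P3Cnm3}. This gives $m_0 = m_2 - 1$, so the assumption $m_0 \leq 1$ forces $m_2 = m_0 + 1 \in \{1,2\}$, with all remaining copies of type $1$; that is, $m_1 = k - 1$ when $m_0 = 0$ and $m_1 = k - 3$ when $m_0 = 1$.

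The case $m_0 = 0$ is immediate: there is a single type $2$ copy and $k - 1$ type $1$ copies arranged around the cycle, so the type $1$ copies form a single contiguous arc of $k - 1 \geq 28$ copies. In particular $S$ contains the pattern $111111111$, contradicting Lemma \ref{lem-P3Cn1s}.

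The substantive case is $m_0 = 1$, where there is one type $0$ copy $P^0$ and exactly two type $2$ copies, every other copy being type $1$. Here I would use Lemma \ref{lem-P3Cn011}, together with the reflection symmetry of $P_3 \Box C_k$ (which reverses the cyclic order of copies and hence turns the forbidden pattern $011$ into $110$), to conclude that neither $011$ nor $110$ may occur. Reading the three copies $P^0$ and its two right neighbours, if the immediate right neighbour is type $1$ then the next copy cannot be type $1$ (that would be $011$) and, since no further type $0$ or type $3$ copies exist, it must be type $2$; thus a type $2$ copy lies within distance two of $P^0$ on the right. The symmetric argument using $110$ places a type $2$ copy within distance two on the left. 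As these are two distinct copies and $m_2 = 2$, both type $2$ copies are confined to the four positions adjacent or next-to-adjacent to $P^0$. Every copy at distance at least three from $P^0$ is therefore type $1$, and these form a single contiguous arc of $k - 5 \geq 24$ copies, again yielding the pattern $111111111$ and contradicting Lemma \ref{lem-P3Cn1s}. Hence $m_0 \geq 2$.

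I expect the only delicate point to be the $m_0 = 1$ analysis, specifically the justification that each of the two type $2$ copies is pinned within distance two of the unique type $0$ copy; this rests on invoking the order-reversing symmetry to forbid $110$ and on the short adjacency check above. Once the type $2$ copies are localised near $P^0$, the long run of type $1$ copies and the appeal to Lemma \ref{lem-P3Cn1s} are routine.
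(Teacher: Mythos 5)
Your proposal is correct and follows essentially the same route as the paper: both cases reduce to forcing a run of at least nine consecutive type~1 copies and invoking Lemma~\ref{lem-P3Cn1s}, with Lemma~\ref{lem-P3Cn011} (plus reflection) pinning the two type~2 copies next to the unique type~0 copy when $m_0=1$. In fact your write-up makes explicit the localisation step that the paper compresses into ``it can be seen \dots that $k \leq 14$'', so no gap remains.
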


\begin{proof}Suppose $m_0 = 0$. By Lemma \ref{lem-P3Cnm3}, $m_3=0$, and then equation \ref{eq-balance} implies that $m_1 = k-1$ and $m_2 = 1$. This implies there are $k-1$ type 1 copies in a row, and from Lemma \ref{lem-P3Cn1s} this means that $k \leq 9$, contradicting the initial assumption.

Then, suppose $m_0 = 1$. Then since $m_3 = 0$, we have $m_1 = k-3$, $m_2 = 2$. Then, it can be seen from Lemma \ref{lem-P3Cn011} and Lemma \ref{lem-P3Cnm02} that $k \leq 14$, which is also a contradiction.\end{proof}

We are now ready to prove the main theorem of this section.

\begin{thm}Consider $P_3 \Box C_n$ for $n \geq 3$. Then,\label{thm-P3Cn}
$$\gamma_s(P_3 \Box C_n) = \left\{\begin{array}{lcc}n + 1 & & \mbox{ if } n = 4, 7,\\
3 \left\lceil \frac{n}{3} \right\rceil & & \mbox{otherwise.}\\
\end{array}\right.$$\end{thm}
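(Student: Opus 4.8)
The plan is to argue by induction on $n$, exactly as set up in the text immediately before the statement. First I would invoke the exact formulation of \cite{burdett} to verify the claimed values for all $n \leq 26$ (in particular the two exceptional values $\gamma_s(P_3 \Box C_4)=5$ and $\gamma_s(P_3 \Box C_7)=8$), and I would take the upper bound $\gamma_s(P_3 \Box C_n) \leq 3\lceil n/3\rceil$ directly from Section \ref{sec-upperbounds}. It then remains to establish the matching lower bound for $n \geq 27$: assuming the formula for all $3 \leq n \leq k-1$, I suppose for contradiction that $\gamma_s(P_3 \Box C_k) = 3\lceil k/3\rceil - 1$ and fix a witnessing secure dominating set $S$. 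I would organise the rest by the residue $k \bmod 3$, since the target cardinality is $k-1$, $k+1$, $k$ for $k \equiv 0,1,2$ respectively.

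The principal case is $k \equiv 1 \pmod 3$, where $|S| = k+1$ and Lemmas \ref{lem-P3Cnm3}--\ref{lem-P3Cnm02} all apply. I would first assemble their consequences: $m_3 = 0$ (Lemma \ref{lem-P3Cnm3}), $m_0 \geq 2$ (Lemma \ref{lem-P3Cnm02}), and hence, via (\ref{eq-balance}), the rigid identity $m_2 = m_0 + 1$. A one-line domination argument using $m_3 = 0$ shows that no two type 0 copies are adjacent, since two adjacent type 0 copies would force all three vertices of the column on the outer side into $S$, i.e.\ a type 3 copy; consequently $S$ decomposes into exactly $m_0$ blocks, each bounded by type 0 copies. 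Because $S$ contains neither the pattern $011$ (Lemma \ref{lem-P3Cn011}) nor, by the reflection symmetry of the cycle, its reverse $110$, any type 1 copy sitting at a block boundary must be immediately followed, inside the block, by a type 2 copy; together with the secure condition applied at each type 0 column this pins down the admissible block shapes. The aim is then a counting argument on the per-block type 2 totals $t_B$: since $|S| = (k-m_0) + \sum_B t_B$, the identity reads $\sum_B t_B = m_0 + 1$, so I would show that the feasible block shapes, and the way the secure condition at each type 0 column propagates type 2 requirements into neighbouring blocks, force $\sum_B t_B$ to exceed $m_0 + 1$. Where the direct count is awkward I would instead smooth away a single defect via Lemma \ref{lem-smoothing} (in the manner of Lemma \ref{lem-P3Cn1s}) to reduce to $P_3 \Box C_{k-3}$, whose value $k-1$ is known by induction, again contradicting $|S| = k+1$.

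The residues $k \equiv 0$ and $k \equiv 2 \pmod 3$ are comparatively short, and I would dispatch each by deleting a single type 0 copy and applying the inductive value on $C_{k-1}$. In both cases (\ref{eq-balance}) gives $m_0 \geq 1$: for $k \equiv 0$ it yields $m_0 = 1 + m_2 + 2m_3 \geq 1$ directly, while for $k \equiv 2$ we have $m_0 = m_2 + 2m_3$, and if this were $0$ then $S$ would be all type 1, forcing $k \equiv 0$ by Corollary \ref{cor-P3Cn0mod3}, a contradiction. Deleting one type 0 copy (second clause of Lemma \ref{lem-smoothing}) gives $|S| \geq \gamma_s(P_3 \Box C_{k-1})$, which by induction equals $k$ when $k \equiv 0$ and $k+1$ when $k \equiv 2$; either way this exceeds the assumed $|S|$, the required contradiction. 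I would also note the one boundary value $k = 28$ (the least admissible $k \equiv 1$), where Lemma \ref{lem-P3Cnm02} does not literally apply, and confirm it directly with the exact solver of \cite{burdett}.

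I expect the main obstacle to be precisely the finishing step of the principal case: converting the structural restrictions (no adjacent $0$s, no $011$ or $110$, no nine consecutive $1$s, the diagonal behaviour of Theorem \ref{thm-111111}, and the secure condition at block boundaries) into a clean contradiction with the rigid identity $m_2 = m_0 + 1$. The delicate point is that short blocks such as $020$ can be locally cheap in type 2 copies, so the contradiction cannot come from a naive per-block lower bound alone; it must track how the secure condition forces additional type 2 copies in the blocks adjacent to each type 0 column, and this bookkeeping — rather than any single inequality — is where the real effort lies.
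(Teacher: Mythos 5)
Your scaffolding matches the paper's, and two of the three residue classes are handled exactly as the paper handles them: for $k \equiv 0$ and $k \equiv 2 \pmod 3$ the paper likewise deletes a single type 0 copy via Lemma \ref{lem-smoothing} and invokes the inductive value of $\gamma_s(P_3 \Box C_{k-1})$, using Corollary \ref{cor-P3Cn0mod3} to rule out $m_0=0$ when $k \equiv 2$. Your observation that $k=28$ falls outside the stated hypothesis of Lemma \ref{lem-P3Cnm02} is also a fair catch.

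The genuine gap is in the case $k \equiv 1 \pmod 3$, precisely where you predicted trouble. You correctly assemble $m_3=0$, $m_0 \geq 2$ and $m_2 = m_0+1$, but your proposed finish --- showing that the admissible block shapes force $\sum_B t_B > m_0+1$ --- is never carried out, and as you observe it cannot follow from a naive per-block lower bound since individual blocks can be cheap in type 2 copies. The paper closes the case with a different and much lighter combination that your plan misses. Since $m_2 = m_0+1$ exceeds the number $m_0$ of blocks, the pigeonhole principle places \emph{two} type 2 copies inside a single block; a block of length $l$ with two type 2 interior copies and no type 3 copies contains at least $l$ vertices of $S$, so trimming that one block out gives $|S| \geq \gamma_s(\cdot_{\,k-l}) + l$, which contradicts $|S| = k+1$ unless $k-l$ lies in a fixed set of small values --- forcing $l$ to be within a bounded distance of $k$, hence at least $17$. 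But Lemma \ref{lem-P3Cn011} (no pattern $011$, hence by reflection no $110$) pins the segments of type 1 copies adjacent to each bounding type 0 copy to length at most one, and Lemma \ref{lem-P3Cn1s} caps any run of type 1 copies at eight, so a block with exactly two type 2 copies has length at most $14$. The two bounds are incompatible, and the case closes without any global bookkeeping over all blocks. Your fallback of ``smoothing away a single defect'' to reach $P_3 \Box C_{k-3}$ also does not obviously go through: the smoothing in Lemma \ref{lem-P3Cn1s} exploits the specific diagonal structure of nine consecutive type 1 copies to find deletable edges avoiding $S$, and a general configuration containing type 0 and type 2 copies offers no analogous surgery. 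Without the pigeonhole-plus-trim argument, the principal case of your proof remains open.
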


\begin{proof}We use the exact formulation from \cite{burdett} to confirm Theorem \ref{thm-P3Cn} for $n \leq 26$. Now, suppose there is some value $k \geq 27$ such that Theorem \ref{thm-P3Cn} is true for $n = 3, \hdots, k-1$, but not for $n = k$.

If $k = 1\mbox{ mod }3$ then from Lemma \ref{lem-P3Cnm02}, we have $m_0 \geq 2$, and hence there are blocks in $S$. Furthermore, from Lemma \ref{lem-P3Cnm3}, we have $m_3=0$, and then equation \ref{eq-balance} implies that $m_2 = m_0 + 1$. Hence, there is a block containing at least two copies of type 2. Suppose this block is of length $l$. Then we can trim out this block, removing at least $l$ entries from $S$. Using Corollary \ref{cor-trim}, this is a contradiction unless $k-l \in \{4,7\}$ and there are exactly two type 2 copies in the block. However, by Lemma \ref{lem-P3Cn1s}, there can be at most eight type 1 copies in a row and also by Lemma \ref{lem-P3Cn011}, $S$ cannot contain the pattern 011. Such a block could only have maximum length of 14 and since $k \geq 27$, this is impossible, and so $k \neq 1\mbox{ mod }3$.

Suppose $k = 2\mbox{ mod }3$. Then we have $\gamma_s(P_3 \Box C_k) \leq k$, and a secure dominating set $S$ exists such that $|S| = k$. Suppose that there is any type 0 copy in $S$. Then from Lemma \ref{lem-smoothing} we have $|S| \geq \gamma_s(P_3 \Box C_{k-1})$. However, since $n = k$ is the first time that Theorem \ref{thm-P3Cn} is not true, this implies that $|S| \geq k+1$, which is a contradiction. Hence, there must not be any type 0 copies in $S$. Then equation (\ref{eq-balance}) implies that every copy is of type 1. However, from Corollary \ref{cor-P3Cn0mod3}, this implies that $k = 0\mbox{ mod }3$ which is a contradiction. Hence, $k \neq 2\mbox{ mod }3$.

The only remaining possibility is that $k = 0\mbox{ mod }3$. Then we have $\gamma_s(P_3 \Box C_k) \leq k-1$, and a secure dominating set $S$ exists such that $|S| = k-1$. This implies that at least one type 0 copy exists in $S$. From Lemma \ref{lem-smoothing} we have $|S| \geq \gamma_s(P_3 \Box C_{k-1})$. However, since $n = k$ is the first time that Theorem \ref{thm-P3Cn} is not true, this implies that $|S| \geq k$, which is a contradiction, completing the proof.\end{proof}


\end{document}